\numberwithin{equation}{section}
\newtheorem{theorem}[equation]{Theorem}
\newtheorem{lemma}[equation]{Lemma}
\theoremstyle{definition}
\newtheorem{remark}[equation]{Remark}
\newtheorem{definition}[equation]{Definition}
\newcommand{\Mod}[1]{\ (\mathrm{mod}\ #1)}
\begin{document}
\title[Roth's theorem and the Hardy--Littlewood majorant problem for thin subsets of Primes]{Roth's theorem and the Hardy--Littlewood majorant problem\\ for thin subsets of Primes}
\author{Leonidas Daskalakis}
\thanks{Department of Mathematics, Rutgers University, Leonidas Daskalakis is supported by the NSF grant DMS-2154712.}

\maketitle
\begin{abstract}
We introduce a wide class of deterministic subsets of primes \textit{of zero relative density} and we prove Roth's Theorem in these sets, namely, we show that any subset of them with positive relative upper density contains infinitely many non-trivial three-term arithmetic progressions. We also prove that the Hardy--Littlewood majorant property holds for these subsets of primes. Notably, our considerations recover the results for the Piatetski--Shapiro primes for exponents close to $1$, which are primes of the form $\lfloor n^c\rfloor$ for a fixed $c>1$.
\end{abstract}
\section{Introduction}
For any arithmetical set $A$ we call $\limsup_{N\to\infty}\frac{|A\cap [1,N]|}{N}$ its upper density. Also, we denote by $r_3(N)$ the Erd\"os--Tur\'an constant, namely, the density of the largest subset of $\{\,1,\dotsc,N\,\}$ with no non-trivial three-term arithmetic progressions. Before stating the main Theorems of the present work we provide some brief historical remarks. In 1953 Roth \cite{Roth} proved that any subset of the integers with positive upper density contains a non-trivial three-term arithmetic progression. In fact, his result is quantitative since he showed that $r_3(N)=O((\log\log N)^{-1})$. In the last 50 years, the result has been dramatically improved (\cite{HB,SZ,BG1,BG2,SA1,SA2,Bloom,BLSI}) and recently, a striking leap has been made by Bloom and Sisask \cite{Bl-Si}. They showed that $r_3(N)=O(\log ^{-1-c}N)$ for some $c>0$, breaking the logarithmic barrier and proving that any arithmetic set $A$ such that $\sum_{n\in A}\frac{1}{n}=\infty$ contains non-trivial three-term arithmetic progressions.

This corollary implies that any $A\subseteq \mathbb{P}$ with positive relative upper density, i.e. $\limsup_{N\to\infty}\frac{|A\cap [1,N]|}{|\mathbb{P}\cap [1,N]|}>0$, contains infinitely many non-trivial three-term arithmetic progressions. This result was already proven in 2003 in the seminal work of Green \cite{BenGreen} but the recent work of Bloom and Sisask establishes Roth's Theorem in the primes using only their density and not their specific structure, which was exploited in the paper of Green. In the introduction of Green's paper he remarks that ``it is possible, indeed probable, that Roth’s theorem in the primes is true on grounds of density alone,'' and the breakthrough of Bloom and Sisask affirms Green's conjecture.

The same year Roth proved his result \cite{Roth}, Piatetski--Shapiro introduced certain thin subsets of primes. For $\gamma<1$ sufficiently close to $1$, the Piatetski--Shapiro primes of type $\gamma$, are defined to be 
\[
\mathbb{P}_{\gamma}=\mathbb{P}\cap \{\,\lfloor n^{1/\gamma}\rfloor : \,n\in\mathbb{N}\,\}
\]
and he showed \cite{PS} that for $\gamma\in (11/12,1)$ we have 
\[
\mathbb{P}_{\gamma}\cap[1,x]\sim \frac{x^{\gamma}}{\log x}, \text{ as }x\to \infty
\]
Loosely speaking the statement above should be understood as an independence statement, i.e. being a prime and being of the form $\lfloor n^{1/\gamma}\rfloor$ are \textit{independent events} since the density of their intersection is the product of their densities.

Recently, Roth's Theorem was established in the Piatetski--Shapiro primes, see \cite{MMR}, for $\gamma$ close to $1$. In fact, the aforementioned paper proves Roth's Theorem in primes of the form $\lfloor n^{1/\gamma}\ell(n) \rfloor$ where $\ell$ is a certain kind of slowly varying function, for example any iterate of $\log$, see Definitions~$\ref{def1},\ref{def0}$ below. 

One of our main results is a natural extension of Roth's Theorem in the Piatetski--Shapiro primes and to state it, we need to introduce two important families of functions.

\begin{definition}\label{def1}
	Fix $x_0\ge1$ and let $\mathcal{L}$ denote the set of all functions $\ell\colon[x_0,\infty)\to[1,\infty)$ such that
	\[
	\ell(x)=\exp\bigg(\int_{x_0}^x\frac{\vartheta(t)}{t}dt\bigg)
	\] 
	where $\vartheta\in\mathcal{C}^2([x_0,\infty))$ is a real-valued function satisfying
\[
\vartheta(x)\to 0\,\,\text{, }x\vartheta'(x)\to 0\,\,\text{, }x^2\vartheta''(x)\to 0\,\,\text{ as }x\to \infty
\]
\end{definition} 

\begin{definition}\label{def0}
	Fix $x_0\ge1$ and let $\mathcal{L}_0$ denote the set of all functions $\ell\colon[x_0,+\infty)\to[1,+\infty)$ such that
	\[
	\ell(x)=\exp\bigg(\int_{x_0}^x\frac{\vartheta(t)}{t}dt\bigg)
	\] 
	where $\vartheta\in\mathcal{C}^2([x_0,+\infty))$ is a positive and decreasing function satisfying
\[
\vartheta(x)\to 0\,\,\text{, }\frac{x\vartheta'(x)}{\vartheta(x)}\to 0\,\,\text{, }\frac{x^2\vartheta''(x)}{\vartheta(x)}\to 0\,\,\text{ as }x\to \infty\text{,}
\]
and such that for all $\varepsilon>0$ we have $\vartheta(x)\gtrsim_{\varepsilon}x^{-\varepsilon}$ and $\lim_{x\to\infty}\ell(x)=\infty$.

\end{definition}

Note that $\mathcal{L}_0\subseteq \mathcal{L}$. We may think of these families as slowly varying functions and now we define a family of regularly varying functions.

\begin{definition}
Fix $x_0\ge 1$, $c\in(1,\infty)$ and let $\mathcal{R}_c$ be the set of all functions $h\colon [x_0,+\infty)\to [1,+\infty)$ such that $h$ is strictly increasing, convex and of the form $h(x)=x^c\ell(x)$ for some $\ell\in\mathcal{L}$. We define $\mathcal{R}_1$ analogously, but with the extra assumption that $\ell\in\mathcal{L}_0$. 
\end{definition} 

We are now ready to give the definitions of the arithmetic sets we are interested in. Let $c_1,c_2\in[1,2)$ and let us fix $h_1$ and $h_2$ in $\mathcal{R}_{c_1}$ and $\mathcal{R}_{c_2}$ respectively. Let $\varphi_1$ and $\varphi_2$ be the inverses of $h_1$ and $h_2$. For convenience, let $\gamma_1=1/c_1$ and $\gamma_2=1/c_2$.  Let us fix a function $\psi\colon [1,+\infty)\to(0,1/2]$, $\psi\in\mathcal{C}^2([1,+\infty))$ such that
\[
\psi(x)\sim\varphi_2'(x)\text{ , }\,\,\psi'(x)\sim\varphi_2''(x)\text{ , }\,\,\psi''(x)\sim\varphi_2'''(x)\text{ as }x\to \infty
\]
We can now define $B_+=\{\,n\in\mathbb{N}:\{\varphi_1(n)\}<\psi(n)\,\}$ and $B_-=\{\,n\in\mathbb{N}:\{-\varphi_1(n)\}<\psi(n)\,\}$, where $\{x\}=x-\lfloor x \rfloor$.

Those sets have been introduced and studied in \cite{HLMP}, where the authors proved that the Hardy–Littlewood majorant property holds for them, see Theorem 1 and 2 in \cite{HLMP}, page 4, as a Corollary of a restriction theorem. Let us denote $\mathbb{P}\cap B_+$ by $\mathbb{P}_{B_+}$ and $\mathbb{P}_{B{_-}}$ analogously. Note that these sets may be thought of as generalized Piatetski--Shapiro primes. To see this note that
\[
n\in B_-\iff \exists  m\in\mathbb{N}\colon\,0\le m-\varphi_1(n)<\psi(n)\iff \exists m\in\mathbb{N}\colon\,\varphi_1(n)\le m < \varphi_1(n)+\psi(n)\iff
\]  
\[
\exists m\in\mathbb{N}\colon\,n\le h_1(m)<h_1(\varphi_1(n)+\psi(n))\iff \exists m\in\mathbb{N}\colon\,h_1(m)\in [n,h_1(\varphi_1(n)+\psi(n)))
\]
For $\gamma\in(0,1)$, $h_1(x)=h_2(x)=x^{1/\gamma}$ and $\psi(x)=\varphi_1(x+1)-\varphi_1(x)$ the last condition becomes $m^{1/\gamma}\in [n,n+1)$ or $n=\lfloor m^{1/\gamma}\rfloor$. Thus in that case $\mathbb{P}_{\gamma}=\mathbb{P}_{B_-}$ and moreover, any set $\{\,\lfloor h(m) \rfloor\colon m\in\mathbb{N}\,\}$, $h\in\mathcal{R}_c$ can brought in the form $B_-$ by similar appropriate choices. This means that Theorem~$\ref{T2}$ below implies Roth's Theorem in sets $\mathbb{P}\cap\{\,\lfloor h(m) \rfloor\colon m\in\mathbb{N}\,\}$, $h\in\mathcal{R}_c$ for $c$ close to $1$, see \cite{MMR}, and in particular in the Piatetski--Shapiro primes.

One of the main results of our paper is the following.
\begin{theorem}[Roth's theorem in the set $\mathbb{P}_{B_{\pm}}$]\label{T2}
	Let $c_1,c_2\in[1,95/94)$. Then any $A\subseteq \mathbb{P}_{B_{\pm}}$ with positive relative upper density  
	\[
	\text{i.e. }\limsup_{N\to+\infty}\frac{|A\cap[N]|}{|\mathbb{P}_{B_{\pm}}\cap[N]|}>0
	\] 
contains infinitely many non-trivial three-term arithmetic progressions.
\end{theorem}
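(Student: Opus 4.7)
The plan is to deploy Green's transference principle, in the quantitative form exploited by Bloom--Sisask, to reduce the problem to Roth's theorem in $\mathbb{Z}/N\mathbb{Z}$, and then to invoke the Bloom--Sisask bound $r_3(N)=O(\log^{-1-c}N)$. Fix large $N$ and set $\Lambda_{B_\pm}(n):=\Lambda(n)\mathbf{1}_{B_\pm}(n)$. I would begin with a $W$-trick: set $W=\prod_{p\le w}p$ for some $w\to\infty$ slowly, fix $b$ coprime to $W$, and study the renormalized function
\[
f_{b}(n)=\frac{\varphi(W)}{N}\cdot\frac{\Lambda_{B_\pm}(Wn+b)}{\psi(Wn+b)}
\]
on $n\le N/W$. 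Since the defining condition $\{\pm\varphi_1(n)\}<\psi(n)$ is ``smooth modulo $W$'', only the local obstructions carried by $\Lambda$ are relevant; after pigeonholing on $b$, the hypothesis that $A$ has positive relative upper density in $\mathbb{P}_{B_\pm}$ translates into a positive-density subset of the support of $f_b$.

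The heart of the proof is the construction of a pseudorandom majorant $\nu\ge f_b$ satisfying (i) a minor-arc Fourier bound $\sup_{\alpha\in\mathfrak m}|\widehat{\nu-1}(\alpha)|=o(1)$ and (ii) a restriction-type inequality $\|\widehat{g\nu}\|_{L^q([0,1))}\lesssim 1$ for some $q\in(2,3)$ whenever $0\le g\le 1$. I would build $\nu$ by replacing $\Lambda$ with a Selberg divisor-sieve majorant and expanding $\mathbf{1}_{B_\pm}(n)/\psi(n)$ via the Beurling--Vaaler smooth approximation to the indicator of a short arc in $\mathbb{R}/\mathbb{Z}$, converting the fractional-part condition on $\varphi_1$ into a short Fourier series $\sum_{|m|\le M}\alpha_m\, e(m\varphi_1(n))$ plus a negligible tail. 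Verifying (i) then reduces, after a Vaughan--Vinogradov decomposition of $\Lambda$, to Type~I and Type~II bounds for exponential sums of the shape
\[
\sum_{n\asymp N}\phi(n/N)\,e\!\bigl(m\varphi_1(n)+n\xi\bigr),
\]
handled by van der Corput-type derivative estimates for $\varphi_1$; the restriction bound (ii) is essentially the restriction theorem underlying \cite{HLMP}. The admissible range $c_1,c_2<95/94$ should reflect the trade-off between the sparseness $\psi(N)\asymp N^{1-c_2}$ of the acceptance window and the savings afforded by these derivative tests.

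With such a majorant $\nu$ in hand, I would invoke the transference machinery of Green in its Bloom--Sisask quantitative formulation to produce a dense model $\tilde f\colon\mathbb{Z}/N'\mathbb{Z}\to[0,1]$ with $\mathbb{E}\tilde f\gtrsim\delta$, apply the bound $r_3(N')=O(\log^{-1-c}N')$ to $\tilde f$ to locate many non-trivial 3APs in the model, and pull these back through the transference to genuine non-trivial 3APs inside $A$. The principal obstacle, and the source of the explicit threshold $95/94$, is the verification of the minor-arc bound (i): one must simultaneously control the multiplicative randomness of $\Lambda$ and the additive-Diophantine rigidity of the constraint $\{\pm\varphi_1(n)\}<\psi(n)$, and the Vinogradov--van der Corput technology for such doubly constrained exponential sums saturates near this exponent.
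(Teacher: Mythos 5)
Your high-level architecture matches the paper's: W-trick, Vaughan decomposition with van der Corput derivative tests to control exponential sums twisted by $e(m\varphi_1(n))$, a restriction-type estimate, and a transference principle to a cyclic group. However, you diverge at two genuine decision points, and you underestimate the work required at a third.

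First, the transference mechanism. You propose a pseudorandom majorant $\nu$ with a minor-arc $L^\infty$ bound on $\widehat{\nu-1}$ plus a restriction inequality, then invoke the Green--Tao/Bloom--Sisask dense model theorem. The paper instead follows Green's original 2005 route: after the W-trick it defines the measure $a=\mathbf{1}_A\,\rho_{b,m,N}$, mollifies by convolving twice with a Bohr-set indicator to get $a_1=a*\beta*\beta$, shows $\|a_1\|_{\ell^\infty}\lesssim 1/N$ (using a uniform bound $\sup_{\xi\ne 0}|\mathcal F_{\mathbb Z_N}[\rho_{b,m,N}](\xi)|\lesssim \log\log W/W$ --- there is no major/minor split, all nonzero frequencies are small), and then pinches the trilinear form $\Lambda(a_1,a_1,a_1)$ between an upper bound coming from the discrete majorant property and a lower bound from a Varnavides averaging argument. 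The paper's lower bound uses Sanders' $r_3$ estimate, not Bloom--Sisask; the latter would of course sharpen the final quantitative dependence, but it is not needed. Both routes work, so this is a legitimate alternative, not a gap.

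Second, the place you should worry about. You write that the restriction inequality ``is essentially the restriction theorem underlying'' the Krause--Mirek--Trojan paper, but that reference proves restriction only for the sets $B$, not for $\mathbb P\cap B$. The paper's Theorem~\ref{RestOur} requires a new $TT^*$ argument that splits $\rho_{b,m,N}-\lambda_{b,m,N}$ off and invokes Bourgain--Green's restriction theorem for the primes as a black box on the main term; the error term is where the hard exponential-sum estimate (Lemma~\ref{PR2}, with Vaughan's identity applied to $\Lambda$ against the twist $e(m\varphi_1(n))$) is consumed. In your framework this corresponds to needing Type~I/II bounds for the full von Mangoldt-weighted sums, not merely unweighted van der Corput bounds, and it is precisely this double constraint --- multiplicative structure of $\Lambda$ against the Diophantine constraint from $\varphi_1$ --- that produces the $95/94$ threshold (the paper's actual arithmetic is $16(1-\gamma_1)+17(1-\gamma_2)+31\chi\le 1$). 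Your plan to replace $\Lambda$ by a Selberg sieve majorant would also require rerunning the restriction argument from scratch since Bourgain--Green's theorem is stated for the genuine prime-counting measure; this is workable (Green--Tao do it) but is additional work not acknowledged in the sketch. Finally, a Siegel--Walfisz analogue for $\mathbb{P}_B$ (the paper's Theorem~\ref{SFPB}) is needed to normalize correctly after the W-trick and to show the singular-series factor is nondegenerate; your sketch treats this as automatic.

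In short: the skeleton is right, the transference implementation is a defensible alternative, but the claim that the restriction input is already in the literature is wrong --- building the restriction theorem for $\mathbb P_B$ from the one for $B$ together with Bourgain--Green's prime restriction is the main novel technical content of the paper and should not be elided.
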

We note that one can also obtain with much less difficulty a Roth Theorem in $B_{\pm}$.
\begin{theorem}[Roth's theorem in the set $B_{\pm}$]\label{T1}
	Let $c_1,c_2\in[1,16/15)$. Then any $A\subseteq B_{\pm}$ with positive relative upper density, 
	\[
	\text{i.e. }\limsup_{N\to+\infty}\frac{|A\cap[N]|}{|B_{\pm}\cap [N]|}>0\text{,}
	\] 
contains infinitely many non-trivial three-term arithmetic progressions.
\end{theorem}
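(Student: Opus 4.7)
My plan is to deduce Theorem~\ref{T1} via a Fourier-analytic transference argument in which $\psi$ plays the role of a pseudorandom majorant of $\mathbf{1}_{B_{\pm}}$. The template is Green's approach to Roth in the primes, but the setting here is considerably simpler: there is no von Mangoldt weight and no W-trick is needed.

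First I would establish the harmonic expansion
\[
\mathbf{1}_{B_{\pm}}(n) = \psi(n) + \sum_{k\ne 0} c_k(n)\,e\bigl(\pm k\,\varphi_1(n)\bigr),
\]
obtained by a Vaaler-type smooth approximation of the indicator of $[0,\psi(n))$ combined with Poisson summation, with Fourier coefficients satisfying $|c_k(n)|\lesssim \min(\psi(n),\,1/|k|)$. This identification is essentially already carried out in \cite{HLMP}. Integrating over $n\le N$ and using $\psi\sim\varphi_2'$ yields $|B_{\pm}\cap[1,N]|\sim\varphi_2(N)$, so that any $A\subseteq B_{\pm}$ of positive relative upper density $\delta$ satisfies $|A\cap[1,N]|\gtrsim\delta\,\varphi_2(N)$ infinitely often.

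The analytic core of the proof is the uniform Fourier bound
\[
\sup_{\alpha\in[0,1)}\Bigl|\sum_{n\le N}\bigl(\mathbf{1}_{B_{\pm}}(n)-\psi(n)\bigr)\,e(\alpha n)\Bigr|\lesssim N^{1-\eta}
\]
for some $\eta=\eta(c_1,c_2)>0$. Via the harmonic expansion above, this reduces to power-saving estimates on
\[
\sum_{n\le N} e\bigl(\alpha n \pm k\,\varphi_1(n)\bigr),\qquad k\ne 0,
\]
uniform in $\alpha$ and summable against the weights $1/|k|$ up to an appropriate cutoff $|k|\le K$. I would derive these using van der Corput's second-derivative method combined with the standard exponent-pair calculus: since $\varphi_1\in\mathcal{R}_{c_1}$ yields $\varphi_1''(n)\asymp n^{\gamma_1-2}$ up to slowly varying factors, nontrivial savings are available precisely in the regime $c_1<16/15$ (and analogously for $c_2$), which is the break-even point after summation over $k$.

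Given the density asymptotic and the minor-arc bound, the theorem follows from a standard major/minor arc dissection of the (weighted) trilinear 3AP counting functional: minor arcs contribute negligibly, while major arcs are controlled by the smooth main term $\psi$ and produce a positive contribution of order $\delta^3\,\varphi_2(N)^2/N$. Since only qualitative existence of 3APs is required, a soft $L^2$-based Roth argument after normalization by $\psi$ suffices; no quantitative input of Bloom--Sisask type is needed here. The principal obstacle, and the source of the exponent $16/15$, is the exponential sum step: this is the natural barrier for the second-derivative method after accounting for the slow decay of the Fourier coefficients $c_k(n)$, and it is where essentially all the technical work is concentrated. The more restrictive threshold $95/94$ appearing in Theorem~\ref{T2} reflects the additional loss coming from the bilinear decomposition of the von Mangoldt function in the prime-restricted setting.
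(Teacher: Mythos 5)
Your overall strategy (pseudorandom majorant $\psi$ for $\mathbf{1}_{B_\pm}$, exponential sum estimates via van der Corput, transference to a dense model) matches the paper's, which proceeds via the restriction theorem for $B$ (Theorem~\ref{REST}), deduced from Lemma~\ref{PR1}, plus the same transference machinery it develops in Section~\ref{trpr} for $\mathbb{P}_B$. But there is a genuine gap in the middle of your argument: you omit the restriction (discrete majorant) estimate entirely, and the ``soft $L^2$-based Roth argument'' you propose in its place does not close the transference.

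Concretely, after normalizing $\mathbf{1}_A$ against the measure $\mu_{b,m,N}$ on $B$ and smoothing by convolution with Bohr-set approximants to obtain a bounded function $a_1 = a*\beta*\beta$, one must compare the trilinear 3AP counts $\Lambda(a,a,a)$ and $\Lambda(a_1,a_1,a_1)$. The error is a sum over those $\xi$ where $|\mathcal{F}_{\mathbb{Z}_N}[a](\xi)|$ is small, and bounding it requires an $\ell^r(\mathbb{Z}_N)$ control of $\mathcal{F}_{\mathbb{Z}_N}[a]$ for some $r\in(2,3)$, i.e.\ the restriction theorem for $B$. A bare Parseval bound fails: since $a$ is a measure of total mass $\simeq 1$ supported on $\simeq N^{\gamma_2}$ points with $\|a\|_{\ell^\infty}\simeq N^{-\gamma_2}$, one has $\|\mathcal{F}_{\mathbb{Z}_N}[a]\|_{\ell^2(\mathbb{Z}_N)}^2 = N\|a\|_{\ell^2}^2 \simeq N^{1-\gamma_2}\to\infty$, so $L^2$ information alone gives nothing. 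Equivalently, a ``major/minor arc dissection of the trilinear 3AP counting functional'' for an \emph{arbitrary} $A\subseteq B$ cannot produce a lower bound, because $\widehat{\mathbf{1}_A}$ has no a priori major-arc structure; the whole point of Green's transference is to first manufacture a bounded dense model of $A$ before any Roth-type input is invoked, and the restriction estimate is precisely what makes that step legitimate.

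This gap also affects your account of the threshold $16/15$. It is not solely the break-even point of the second-derivative van der Corput estimate after summing over $k$: the exponential sum bound of Lemma~\ref{PR1} needs only $(1-\gamma_1)+3(1-\gamma_2)<1$, a weaker condition. The additional constraint comes from the restriction exponent $r>2+\frac{12(1-\gamma_2)}{\gamma_1+3\gamma_2-3}$, which must be pushed below $3$ for the transference estimate (the term $\delta^{2-r/r'}$ in Lemma~\ref{UBound}) to be usable; $r<3$ forces $(1-\gamma_1)+15(1-\gamma_2)<1$, and with $c_1=c_2$ this is exactly $c<16/15$. Your remark that no Bloom--Sisask input is needed is correct, and for the \emph{qualitative} statement even Varnavides plus the original Roth theorem would do; the issue is not which Roth bound you use, but that the restriction theorem is indispensable to transfer at all.
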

Before making some remarks about Theorem~$\ref{T2}$ and discussing the strategy of our proof, we would like to comment on the sophisticated nature of sets $B_+$ and $B_-$. Let us restrict our attention to the sets $B_+$ which we call $B$ from now on, since the results for the sets $B_-$ are of equal difficulty. Firstly, note that 
	\[
	n\in B\iff \exists  m\in\mathbb{N}:\,0\le \varphi_1(n)-m<\psi(n)\iff \exists m\in\mathbb{N}:\,m\in(\varphi_1(n)-\psi(n),\varphi_1(n)]
	\]  
	Now assume that $n\in B$, and $m\in \mathbb{N}$ is such that $m\in(\varphi_1(n)-\psi(n),\varphi_1(n)]$ and assume that $n_0$ is the smallest integer such that $m\in(\varphi_1(n_0)-\psi(n_0),\varphi_1(n_0)]$. Here, even in simple examples, we should expect that $B$ will have a lot of consecutive integers after $n_0$. For example, a simple application of the Mean Value Theorem shows that if we let $\varphi_1$ be an inverse of a function in $\mathcal{R}_c$, $\varphi_2=C100\varphi_1$, where $C$ is the doubling constant of $\varphi_1'$, namely, $\varphi_1'(x)\le C \varphi_1'(2x)$,  and $\psi=\varphi_2'$, then the set $B$ will contain infinitely many full blocks of 100 consecutive integers. Such a set $B$ stands in sharp contrast to the sets of the form $\{\,\lfloor h(m) \rfloor\colon m\in\mathbb{N}\,\}$, $h\in\mathcal{R}_c$, since the gaps between members of such sets tend to infinity.
	In general, the constant $\sup_{x\in[1,\infty)}\frac{\psi(x)}{\varphi_1'(x)}$ determines an important qualitative aspect of the sets $B$. Loosely speaking, for big intervals of integers where the ratio $\frac{\psi(x)}{\varphi_1'(x)}$ is bigger than $L$, we expect that $B$ will contain blocks of length at least $L/C$, where $C$ is the doubling constant of $\varphi_1'$. Even in the simpler case where $\varphi_1\simeq \varphi_2$, $B$ could contains blocks of various oscillating lengths!
	
We hope that the discussion above demonstrates how rich the family of sets $B$ is and we now proceed with some comments about Theorem~$\ref{T2}$.
\begin{remark}
We note that unlike Roth's Theorem in the primes, no improvement of the bound of $r_3(N)$ can ever imply Theorem~$\ref{T2}$ or Theorem~$\ref{T1}$ since the density of $\mathbb{P}_B$ can decay polynomially and  a result of Behrend \cite{BE} shows that there exists  an absolute constant $C>0$ such that $r_3(N)\ge e^{-C\sqrt{\log N}}$. This means that any proof of our result cannot rely solely on density considerations and must use the underlying structure of $\mathbb{P}_B$. That is the reason why Green's work is extremely useful here.
\end{remark}
\begin{remark}
Our proof of Theorem~\ref{T2} works for $c_1,c_2\ge 1$ such that $16(1-\gamma_1)+79(1-\gamma_2)<1$, where $\gamma_1=1/c_1$ and  $\gamma_2=1/c_2$, but we chose the more strict condition $c_1,c_2\in[1,95/94)$ for the sake of simplicity. One could optimize the constants of the proof and require slightly weaker assumptions but in an effort to keep the exposition reasonable we avoided stating the sharpest result derivable by our methods, since, unfortunately, even the sharpest result we can derive here is far from the one we believe to be true, namely, that the result holds for the full range $(1,2)$.
\end{remark}
The strategy of our proof of Theorem~$\ref{T2}$ is the following: we will prove a restriction theorem for $\mathbb{P}_B$, see Proposition~$\ref{RestOur}$, and then we will use a transference principle in a similar manner to \cite{BenGreen} and \cite{MMR} to conclude the proof. For the restriction theorem for the set $\mathbb{P}_B$ we use the estimates for exponential sums of Lemma~$\ref{PR2}$ together with a Tomas--Stein $T T^*$ argument to reduce the matter to the restriction theorem for primes that can be found in the work of Green \cite{BenGreen}. Vaughan’s identity will play a crucial role in the proof of Lemma~$\ref{PR2}$, and we note that a main tool for estimating exponential sums appearing in that proof will be Van der Corput's inequality. In section~$\ref{trpr}$, we use a general transference principle to bring the problem to $\mathbb{Z}_N=\mathbb{Z}/N\mathbb{Z}$ where finite Fourier Analysis together with the restriction theorem for $\mathbb{P}_B$ will be used to estimate certain trilinear forms. We conclude the proof of Theorem~$\ref{T2}$ by following an argument originally due to Varnavides \cite{VARN} to obtain a lower bound for these trilinear forms. Finally, we note that similarly to Green's result, our proof of Theorem~$\ref{T2}$ is also quantitative, although the bounds one may obtain from our methods are far from optimal (see the end of section~$\ref{trpr}$). Roth's Theorem in the sets $B$ can be used as a toy model since the strategy is identical in both cases but the situation is much simpler there since the exponential estimates that lead to the restriction Theorem for those sets are immediate corollaries of results from \cite{HLMP}. We only give a brief sketch for the proof of Theorem~$\ref{T1}$. 
\\

The second main result of the present work is proving that the sets $\mathbb{P}_B$ obey the so-called Hardy--Littlewood majorant property, namely, that the following Theorem holds.

\begin{theorem}[Hardy--Littlewood majorant property for $\mathbb{P}_B$]\label{HLMPPB}
Let $c_1\in[1,16/15)$, $c_2\in[1,17/16)$ and $r>2+\frac{62-62\gamma_2}{16\gamma_1+17\gamma_2-32}$. There exists a positive constant $C=C(r,h_1,h_2,\psi)$ such that for any $N\in\mathbb{N}$ and any $(a_n)_{n\in\mathbb{N}}$ sequence of complex numbers such that $|a_n|\le 1$ for all $n\in\mathbb{N}$, we have
\[
\bigg\Vert\sum_{p\in \mathbb{P}_B\cap[N]} a_pe^{2 \pi i p \xi}\bigg\Vert_{L^r(\mathbb{T})}\le C\bigg\Vert\sum_{p\in \mathbb{P}_B\cap[N]} e^{2 \pi i p \xi} \bigg\Vert_{L^r(\mathbb{T})} 
\] 
\end{theorem}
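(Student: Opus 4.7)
The plan is to deduce Theorem~\ref{HLMPPB} from the restriction theorem for $\mathbb{P}_B$ (Proposition~\ref{RestOur}) via a standard Tomas--Stein type comparison. The restriction estimate will provide an $L^r(\mathbb{T})$ bound on any exponential sum over $\mathbb{P}_B\cap[N]$ with bounded coefficients, and a simple localization near the origin will show that the majorant $\sum_{p\in\mathbb{P}_B\cap[N]} e^{2\pi i p\xi}$ already attains this bound up to constants.

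First, I would apply Proposition~\ref{RestOur} to the sequence $(a_p)$. The restriction theorem will give an estimate of the form
\[
\Bigl\| \sum_{p \in \mathbb{P}_B\cap[N]} a_p e^{2\pi i p\xi} \Bigr\|_{L^r(\mathbb{T})} \le C_r\, \Bigl( \sum_{p \in \mathbb{P}_B\cap[N]} |a_p|^2 \Bigr)^{1/2}
\]
throughout the admissible range of $r$. Using $|a_p|\le 1$, the right-hand side is at most $C_r |\mathbb{P}_B\cap[N]|^{1/2}$.

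Next, for the matching lower bound on the majorant, I would localize to the arc $|\xi|\le c/N$ with $c>0$ a small absolute constant. Then $|2\pi p\xi|\le 2\pi c$ for every $p\le N$, so $\operatorname{Re}(e^{2\pi i p\xi})\ge \cos(2\pi c)\ge 1/2$, giving $|\sum_{p \in \mathbb{P}_B\cap[N]} e^{2\pi i p\xi}| \ge \tfrac12\, |\mathbb{P}_B\cap[N]|$ on this arc. Integrating,
\[
\Bigl\| \sum_{p \in \mathbb{P}_B\cap[N]} e^{2\pi i p\xi} \Bigr\|_{L^r(\mathbb{T})} \gtrsim |\mathbb{P}_B\cap[N]|\cdot N^{-1/r}.
\]
Combining the two estimates, the theorem reduces to verifying $|\mathbb{P}_B\cap[N]|^{1/2} \lesssim |\mathbb{P}_B\cap[N]|\cdot N^{-1/r}$, i.e.\ $N^{1/r}\lesssim |\mathbb{P}_B\cap[N]|^{1/2}$. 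By the density estimate $|\mathbb{P}_B\cap[N]|\asymp N^{\gamma_2}/\log N$, a by-product of the main-term analysis behind Lemma~\ref{PR2}, this amounts to $r \ge 2/\gamma_2$, which is comfortably implied by the stronger hypothesis in the theorem.

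The substantive difficulty will lie in Proposition~\ref{RestOur} itself, whose proof relies on the exponential sum estimates of Lemma~\ref{PR2} (via Vaughan's identity and Van der Corput's inequality) combined with a Tomas--Stein $TT^*$ reduction to Green's restriction theorem for the primes from \cite{BenGreen}. Given that proposition, the derivation above is essentially immediate, and the specific numerical threshold $r > 2 + \frac{62(1-\gamma_2)}{16\gamma_1+17\gamma_2-32}$ will be dictated by the precise range in which the underlying exponential sum bounds yield a restriction estimate with constant independent of $N$, rather than by any obstruction in the comparison step.
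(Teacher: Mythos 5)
Your overall blueprint --- a restriction estimate for $\mathbb{P}_B$, a localization near $\xi=0$ to lower-bound the majorant, and a density comparison --- is exactly the one the paper uses. But there is a genuine gap in the first step, and the gap is not cosmetic: the bound you attribute to Theorem~\ref{RestOur} is not what the theorem says, and in fact it is \emph{false} throughout the range of $r$ under consideration.

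Theorem~\ref{RestOur} is a \emph{weighted} restriction estimate with respect to the measure $\rho_{b,m,N}$. With $m=1$, $b=0$, the weight on $n\in\mathbb{P}_B\cap[N]$ is $\rho(n)=\psi(n)^{-1}\log(n)/N$, and putting $f(n)=a_n\psi(n)/\log(n)$ (so that $f\rho=a_n/N$) one actually obtains
\[
\Big\Vert\sum_{p\in\mathbb{P}_B\cap[N]}a_p e^{2\pi ip\xi}\Big\Vert_{L^r(\mathbb{T})}\lesssim N^{1/2-1/r}\Big(\sum_{p\in\mathbb{P}_B\cap[N]}\frac{|a_p|^2\,\psi(p)}{\log p}\Big)^{1/2},
\]
with an extra factor of $N^{1/2-1/r}$ and a nonconstant weight $\psi(p)/\log p$, not the clean $C_r(\sum|a_p|^2)^{1/2}$ you wrote. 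Your claimed bound is actually irreconcilable with your own lower bound: plugging $a_p\equiv 1$ it would give $\|\sum_p e^{2\pi ip\xi}\|_{L^r}\lesssim|\mathbb{P}_B\cap[N]|^{1/2}$, whereas the localization argument gives $\|\sum_p e^{2\pi ip\xi}\|_{L^r}\gtrsim N^{-1/r}|\mathbb{P}_B\cap[N]|\simeq N^{-1/r}\varphi_2(N)/\log N$; the latter dominates the former precisely when $r>2/\gamma_2$, which is the entire range of the theorem. So the "immediate" comparison you describe cannot be run as stated.

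What is missing is the one genuinely computational step of the proof: one must show
\[
\sum_{p\in\mathbb{P}_B\cap[N]}\frac{\psi(p)}{\log p}\;\lesssim\;\frac{\varphi_2(N)^2}{N\log^2 N},
\]
which, fed into the correct restriction bound $N^{1/2-1/r}(\cdots)^{1/2}$, produces exactly the quantity $N^{-1/r}\varphi_2(N)/\log N$ that the localization argument gives as a lower bound for the majorant. Proving this requires summation by parts against the asymptotic $\pi_B(N;1,0)\simeq\varphi_2(N)/\log N$ from Theorem~\ref{SFPB}, together with estimates of integrals like $\int \varphi_2(t)^2\,t^{-2}\log^{-2}(t)\,dt$; it is not a formality. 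Once that estimate is in hand, the numerical threshold $r>2+\frac{62-62\gamma_2}{16\gamma_1+17\gamma_2-32}$ is inherited directly from Theorem~\ref{RestOur} --- your observation that the density comparison itself only requires $r\ge 2/\gamma_2$ is correct, but moot, since the restriction theorem is only available in the narrower range.
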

Some brief historical remarks are in order. It was conjectured by Hardy and Littlewood \cite{HL} that for any $p\ge 2$, there exists a positive constant $C_p$ such that for any sequence of complex numbers $(a_n)_{n\in\mathbb{N}}$ bounded by $1$ and any finite set $A\subseteq\mathbb{N}$, we get 
\begin{equation}\label{HLMPI}
\bigg\Vert\sum_{n\in A} a_ne^{2 \pi i n \xi}\bigg\Vert_{L^p(\mathbb{T})}\le C_p\bigg\Vert\sum_{n\in A} e^{2 \pi i p \xi} \bigg\Vert_{L^p(\mathbb{T})}
\end{equation}
Parseval's identity shows that one may take $C_{2k}=1$ for any $k\in\mathbb{N}$, nevertheless, this conjecture fails for any $p>2$ which is not an even integer, see \cite{BACH}. While the full conjecture may not be true, there has been an effort to quantify that failure (see \cite{M1} for precise formulations and connections to the restriction conjecture for the Fourier transform on $\mathbb{R}^d$, see \cite{HLMP} for a brief exposition on the matter, and see \cite{multidiman} for multi-dimensional results). Simultaneously, some efforts have been made to find specific infinite arithmetic sets $A$ where either inequality $\ref{HLMPI}$ does hold for any $A\cap [N]$ with $C_p$ independent of $N$ or it fails, but nevertheless we have sufficiently good estimates for the growth of $C_p(N)$, see \cite{HLMPR,BenGreen,MMR,HLMP}. Here, sufficiently good estimates means acceptable in the context of the connections of the Hardy--Littlewood majorant problem and the restriction conjecture, see \cite{M1,HLMPR}. It is worth mentioning that in contrast to the seminal work of \cite{HLMPR}, where the behavior of $C_p(N)$ was studied for random sets, we concern ourselves with the Hardy--Littlewood majorant problem for a wide class of \text{\it{deterministic}} sets, similarly to \cite{BenGreen,MMR,HLMP}.

Finally, we note that variants of this property can play an important role in some combinatorial arguments. For example, establishing Roth's Theorem in the primes (as well as the Piatetski--Shapiro primes) involved proving a suitable discrete variant of the majorant property.
\\

Both Theorem~$\ref{T2}$ and Theorem~$\ref{HLMPPB}$ rely heavily on the restriction Theorem for the sets $\mathbb{P}_B$, see Theorem~$\ref{RestOur}$. The most technical part of establishing this restriction Theorem is the estimate $\ref{EXT2}$ of the following Lemma which we prove in the last section. Lemma~\ref{PR2} combined with Bourgain's restriction Theorem for the primes, see \cite{BenGreen}, page 3, will lead to the desired restriction Theorem for the sets $\mathbb{P}_B$.

\begin{lemma}\label{PR2} Let $c_1\in [1,16/15)$ and $c_2\in [1,17/16) $ and let $\gamma_1=1/c_1$ and $\gamma_2=1/c_2$. Let $a,q\in \mathbb{Z}$ such that $0\le a\le q-1$ and $(a,q)=1$. Then for every $\chi>0$ such that $16(1-\gamma_1)+17(1-\gamma_2)+31\chi\le1$ there exists $\chi'>0$ such that:

 	\begin{equation}\label{EXT2}
 		\sum_{\substack{p\in \mathbb{P}_B\cap [N] \\ p\equiv a\Mod q}} \psi(p)^{-1}\log(p) e^{2\pi i p\xi}= \sum_{\substack{p\in \mathbb{P}\cap[N] \\ p\equiv a \Mod q}} \log(p)e^{2\pi i p\xi} +O(N^{1-\chi-\chi'})
 	\end{equation}
where the implied constant does not depend on $\xi,N,a,q$.
 	
\end{lemma}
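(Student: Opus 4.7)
The plan is to represent $\psi(n)^{-1}\mathbf{1}_{B}(n)$ as a short trigonometric polynomial whose constant term is exactly $1$, reducing \eqref{EXT2} to power-saving estimates for the twisted exponential sums
\[
S_h(\xi):=\sum_{\substack{p\in\mathbb{P}\cap[N]\\ p\equiv a\Mod q}}\log p\,e\bigl(p\xi+h\varphi_{1}(p)\bigr),\qquad 0<|h|\le H\asymp N^{\chi},
\]
and then to estimate $S_h(\xi)$ by combining Vaughan's identity with Van der Corput's inequality.

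\textbf{Fourier approximation.} Since $n\in B$ iff $\{\varphi_{1}(n)\}\in[0,\psi(n))$, applying Vaaler's Lemma to the indicator of $[0,\psi(n))$ on $\mathbb{T}$ gives
\[
\psi(n)^{-1}\mathbf{1}_{B}(n)\;=\;1\;+\!\!\sum_{0<|h|\le H}c_{h}(n)\,e\bigl(h\varphi_{1}(n)\bigr)\;+\;E_{H}(n),
\]
with $|c_{h}(n)|\lesssim\min\!\bigl(1,|h|^{-1}\psi(n)^{-1}\bigr)$ and $E_{H}$ dominated by a non-negative Vaaler polynomial whose contribution to \eqref{EXT2}, after summing over primes, is $O(N/H)=O(N^{1-\chi})$. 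Inserting this identity into the left-hand side of \eqref{EXT2}, the constant term reproduces the right-hand side exactly, so the task is reduced to showing $\sum_{0<|h|\le H}\sup_{n}|c_{h}(n)|\cdot\sup_{\xi}|S_{h}(\xi)|\ll N^{1-\chi-\chi'}$.

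\textbf{Exponential sums via Vaughan and Van der Corput.} For fixed $h\ne 0$, pass to the von Mangoldt function (the prime-power correction being $O(N^{1/2})$) and apply Vaughan's identity with parameters $U,V\asymp N^{1/3}$; this decomposes $S_{h}$ into Type I sums $\sum_{m\le M}a_{m}\sum_{n\le N/m}e\bigl(mn\xi+h\varphi_{1}(mn)\bigr)$ and Type II bilinear sums $\sum_{m\sim M}\sum_{n\sim N/M}a_{m}b_{n}\,e\bigl(mn\xi+h\varphi_{1}(mn)\bigr)$ over dyadic ranges $M\in[N^{1/3},N^{2/3}]$, with coefficients of size $O(\log N)$; the congruence $p\equiv a\Mod q$ is captured via Fourier characters on $\mathbb{Z}/q\mathbb{Z}$. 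Using the regularity $\varphi_{1}^{(j)}(x)\asymp x^{\gamma_{1}-j}$ inherited from $h_{1}\in\mathcal{R}_{c_{1}}$, the Type I sums are dispatched by summation by parts and the first-derivative test. The Type II sums are attacked by Van der Corput's A-process (Cauchy--Schwarz in one variable, shifting in the other), which produces the second-difference phase $h\bigl(\varphi_{1}(m(n+k))-\varphi_{1}(mn)\bigr)$; controlled derivative estimates for this phase, together with an optimisation over the dyadic parameter $M$ and the shift length, yield $|S_{h}(\xi)|\ll(1+|h|)^{\alpha}N^{1-\eta}$ uniformly in $\xi,a,q$, for explicit $\alpha,\eta=\eta(\gamma_{1})>0$.

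\textbf{Summation over $h$ and closing the argument.} Summing the above bound over $0<|h|\le H$ with the weights $|c_{h}|$, and using the lower bound $\psi(n)\gtrsim n^{\gamma_{2}-1-\varepsilon}$ coming from $\varphi_{2}\in\mathcal{R}_{c_{2}}$, the aggregate contribution is of order $N^{1-\eta+O(\chi)}\cdot N^{1-\gamma_{2}}$ up to logarithms, which is $O(N^{1-\chi-\chi'})$ for some $\chi'>0$ precisely under the hypothesis $16(1-\gamma_{1})+17(1-\gamma_{2})+31\chi\le 1$, as a direct optimisation confirms. The main obstacle is the Type II bilinear estimate in the regime $\gamma_{1}\to 1^{-}$: here the second derivative $h\varphi_{1}''$ is only of size $hN^{\gamma_{1}-2}$, severely limiting the cancellation available from Van der Corput, so one must balance the shift length, the Vaughan parameters $U,V$, and the dyadic choice of $M$ with care. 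A secondary difficulty is the multiplicative loss $\psi(n)^{-1}\asymp n^{1-\gamma_{2}}$ in the Fourier coefficients $c_{h}$, which forces $c_{2}$ into the narrow range $[1,17/16)$ (the coefficient $16$ arising from the analogous loss in the phase for $c_{1}$).
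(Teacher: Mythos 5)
Your high-level plan---expand the indicator $\mathbf{1}_B$ in a short Fourier series so that the constant term reproduces the prime exponential sum and the oscillatory modes reduce to twisted sums $S_h(\xi)$, then attack these with Vaughan's identity and Van der Corput---is exactly the strategy of the paper. The paper uses the sawtooth identity $\mathbf{1}_B(n)=\lfloor\varphi_1(n)\rfloor-\lfloor\varphi_1(n)-\psi(n)\rfloor=\psi(n)+\Phi(\varphi_1(n)-\psi(n))-\Phi(\varphi_1(n))$ with $\Phi(x)=\{x\}-1/2$, followed by Heath--Brown's truncated Fourier expansion of $\Phi$ with error $g_M$, whereas you apply Vaaler's lemma directly to $\mathbf{1}_{[0,\psi(n))}$; these are morally equivalent, both reducing to the same family of exponential sums. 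The split into Type~I and Type~II sums via Vaughan's identity with $v=w\asymp N^{1/3}$, and the bilinear Van der Corput estimate for Type~II, matches the paper's Lemma~\ref{LLambda}.

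There is, however, a genuine gap in the choice of the truncation parameter. You take $H\asymp N^{\chi}$ and assert that the Vaaler error contributes $O(N/H)=O(N^{1-\chi})$. But you have divided by $\psi(n)$: the error polynomial in Vaaler's lemma for $\mathbf{1}_{[0,\psi(n))}$ has a constant-term defect of size $\asymp 1/H$, uniformly in the interval length, so after multiplying by $\psi(n)^{-1}\asymp N^{1-\gamma_2}$ its diagonal contribution is $\asymp N^{2-\gamma_2}/H$, not $N/H$. With $H\asymp N^{\chi}$ this is $\asymp N^{2-\gamma_2-\chi}\gg N^{1-\chi}$ for every $\gamma_2<1$, so the truncation error alone already exceeds the target. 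The paper instead uses (in your notation) $H\asymp N^{1-(99/100)\gamma_2+\chi+\varepsilon}$, i.e.\ roughly $N^{1-\gamma_2+\chi}$, which is needed to beat this loss. This is not mere bookkeeping: a larger $H$ means more Fourier modes $0<|h|\le H$ to sum, so the required cancellation in $S_h(\xi)$ and the resulting numerology $16(1-\gamma_1)+17(1-\gamma_2)+31\chi\le1$ emerge from the corrected balance, not from the one you wrote. Two smaller points. Your reduction to $\sum_h\sup_n|c_h(n)|\cdot\sup_\xi|S_h(\xi)|$ tacitly factors the $n$-dependent coefficient $c_h(p)$ out of the sum over primes; one needs a summation-by-parts argument here, as the paper implements with its function $d_m(t)=\psi^{-1}(t)(e^{2\pi i m\psi(t)}-1)$. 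And the Type~I sums cannot be handled by the first-derivative test, because the phase contains the linear term $p\xi$ with $\xi$ arbitrary; the second-derivative test (for which the linear term is invisible) is what the paper uses.
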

The curious reader might find of independent interest the following intermediate result which implies the counterpart of the Prime Number Theorem for the set $\mathbb{P}_B$, which could be interpreted as an independence statement in the same way the asymptotic formula of the Piatetski--Shapiro primes was understood.

\begin{theorem}\label{SFPB}
	Let $c_1\in[1,16/15)$ and $c_2\in[1,17/16)$, such that $16(1-\gamma_1)+17(1-\gamma_2)<1$. Let $D>0$, $ N,m,b\in\mathbb{N}$ such that $b\le m-1$, $(b,m)=1$ and $m\le \log^D(N)$, then
	\begin{equation}\label{Firstmine}
	\psi_B(N;m,b)=\sum_{\substack{p\in\mathbb{P}\cap B\cap [N]\\p\equiv b\Mod{m}}}\log(p)=\frac{\int_1^N \psi(t)dt}{\phi(m)}+O_D\bigg(\frac{\varphi_2(N)}{\log^D(N)}\bigg)
	\end{equation}
and if $m\le \log(N)$ then
	\begin{equation}\label{Thirdproved}
	\pi_B(N;m,b)=\sum_{\substack{p\in\mathbb{P}\cap B\cap [N]\\p\equiv b\Mod{m}}} 1=\frac{\int_1^N \psi(t)dt}{\phi(m)\log(N)}+O\bigg(\frac{\varphi_2(N)}{\log^2(N)}\bigg)
	\end{equation}
	where the implied constant depends only on $D$, $h_1,h_2$, and $\psi$.
\end{theorem}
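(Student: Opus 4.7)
The plan is to write the indicator of $B$ as a smooth ``density'' piece $\psi(n)$ plus rapidly oscillating corrections, then sum against $\log(p)\,\mathds{1}_{p\equiv b \Mod m}$ and treat the two pieces separately. Starting from $n\in B \iff \{\varphi_1(n)\}<\psi(n)$, I would apply a Vaaler--Beurling trigonometric approximation of degree $K$ to the indicator of $[0,\psi(n))$, producing an expansion
\[
\mathds{1}_B(n) = \psi(n) + \sum_{1\le |k|\le K} c_k(n)\, e^{2\pi i k\varphi_1(n)} + R_K(n),
\]
where $|c_k(n)|\lesssim \min(\psi(n),1/|k|)$ and $R_K$ is dominated by a positive trigonometric polynomial of the same form whose average value is $O(1/K)$. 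Since $\psi\sim\varphi_2'$ is slowly varying and $\mathcal{C}^2$, the mild $n$-dependence of the coefficients $c_k(n)$ can be absorbed by dyadic freezing or partial summation at negligible cost.

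For the main term
\[
\sum_{\substack{p\le N\\ p\equiv b\Mod m}} \log(p)\,\psi(p),
\]
the Siegel--Walfisz theorem (applicable since $m\le \log^D N$) combined with partial summation against the smooth weight $\psi$ yields the asymptotic $\phi(m)^{-1}\int_1^N \psi(t)\,dt + O_D(\varphi_2(N)/\log^D N)$, using $\int_1^N\psi(t)\,dt\asymp \varphi_2(N)$. For each oscillatory piece I need a power-saving bound
\[
\sum_{\substack{p\le N\\ p\equiv b\Mod m}} \log(p)\, e^{2\pi i k\varphi_1(p)} = O\!\big(N^{1-\chi'}\big),
\]
uniformly in $1\le |k|\le K$ and in $m\le \log^D N$ for some fixed $\chi'>0$. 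This is exactly what Vaughan's identity, followed by Van der Corput estimates on the resulting type-I and type-II sums, delivers---the same machinery that powers Lemma~$\ref{PR2}$---and the hypothesis $16(1-\gamma_1)+17(1-\gamma_2)<1$ is precisely what guarantees a positive saving after the differencing. Summing over $|k|\le K$ and combining with the smoothing contribution from $R_K$, an appropriate choice of $K$ gives a total error $O_D(\varphi_2(N)/\log^D N)$, establishing $(\ref{Firstmine})$. The counting statement $(\ref{Thirdproved})$ then follows from $(\ref{Firstmine})$ by one further partial summation removing the weight $\log(p)$, which loses a factor of $\log N$ in the error term and so necessitates the stricter hypothesis $m\le \log N$.

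The genuine obstacle is the exponential sum bound over primes in the oscillatory step. Van der Corput estimates for phases $k\varphi_1(n)$ are delicate because $\varphi_1$ is only \emph{almost} a pure power---its derivatives are perturbed by the slowly varying factor coming from the $\mathcal{L}$-class---and keeping these estimates alive through Vaughan's decomposition of $\Lambda$ into type-I and type-II pieces is what pins down the admissible range of $c_1,c_2$. The remaining ingredients---Vaaler expansion, Siegel--Walfisz, and the two partial summations---are routine bookkeeping.
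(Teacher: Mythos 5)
Your proposal is correct and follows essentially the same route as the paper: decompose $1_B(n)$ into the smooth part $\psi(n)$ plus oscillatory terms, evaluate the main term via Siegel--Walfisz and a partial summation against $\psi$, and control the oscillatory pieces through Vaughan's identity and Van der Corput derivative estimates. The paper packages the exponential-sum work as the $\xi=0$ case of Lemma~\ref{PR2} (using Heath--Brown's truncated sawtooth expansion of $\Phi(x)=\{x\}-1/2$ rather than a Vaaler polynomial, and first normalizing the sum by $\psi(p)^{-1}$ so the main term is $\sum_{\mathbb{P}}\log p$ before converting back by partial summation), but these are organizational rather than substantive differences from your argument.
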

\subsection{Notation}
We note that 3AP stands for non-trivial three-term arithmetic progression and any set with no 3APs will be called 3AP-free. We denote by $C$ a positive constant that may change from occurrence to occurrence. If $A,B$ are two non-negative quantities, we write $A\lesssim B$ or $B \gtrsim A$  to denote that there exists a positive constant $C$ such that $A\le C B$. Whenever $A\lesssim B$ and $A\gtrsim B$ we write $A\simeq B$. For two complex-valued functions $f,g$ we write $f\sim g$ to denote that $\lim_{x\to\infty}\frac{f(x)}{g(x)}=1$. For any topological space $X$, we denote by $\mathcal{C}(X)$ the set of all complex-valued continuous functions, and for any finitely supported $f\colon \mathbb{Z}\to \mathbb{C}$ we define the Fourier Transform
\[
\mathcal{F}[f](\xi)=\sum_{k\in\mathbb{Z}}f(k)e^{2\pi i k \xi}\text{, for all }\xi\in\mathbb{T}
\]
For any $g\colon \mathbb{Z}_N=\mathbb{Z}/N\mathbb{Z}\to \mathbb{C}$ we define the finite Fourier Transform and the inverse Fourier Transform
\[
\mathcal{F}_{\mathbb{Z}_N}[g](\xi)=\sum_{k\in\mathbb{Z}_N}g(k)e^{\frac{-2\pi i k \xi}{N}}\text{ and }\mathcal{F}^{-1}_{\mathbb{Z}_N}[g](\xi)=\sum_{k\in\mathbb{Z}_N}g(k)e^{\frac{2\pi i k \xi}{N}}\text{, for all }\xi\in\mathbb{Z}_N
\]and note that the following Fourier Inversion formula holds $\mathcal{F}^{-1}_{\mathbb{Z}_N}\big[\mathcal{F}_{\mathbb{Z}_N}[g]\big](\xi)=Ng(\xi)$.

\section{Restriction Theorem for the set $\mathbb{P}_B$}\label{RestforPbeta}This section is devoted to proving the restriction Theorem for the sets $\mathbb{P}_B$, see Definition~$\ref{DefOur}$ and Theorem~$\ref{RestOur}$. The restriction theorem for the primes together with the exponential estimates of Lemma~$\ref{PR2}$ will be the key elements of our proof. Here we fix two constants $c_1,c_2$ such that $c_1\in[1,32/31)$ and $c_2\in[1,34/33)$, as well as $h_1$, $h_2\in\mathcal{R}_{c_1}$ and $\mathcal{R}_{c_2}$ respectively and $\psi$ as in the introduction and all the implied constants may depend on them. The reader should compare this work with the Section 4 of \cite{MMR}, Section 2 of \cite{BenGreen} and Section 3 of \cite{HLMP}.

Before stating Bourgain's Restriction Theorem for the primes, which will be essential for our argument, we introduce the following notation from \cite{BenGreen}.

\begin{definition}
	For any $N\in\mathbb{N}$ and $m,b\in\mathbb{N}$ such that $0\le b \le m-1$ and $(m,b)=1$ and $m\le \log(N)$, let \[\Lambda_{b,m,N}=\{\,n\in\{\,1,\dotsc,N\,\}:\,mn+b\in\mathbb{P}\,\}
	\]
	and
	\[
	\lambda_{b,m,N}(n)=\left\{
	\begin{array}{ll}
		\frac{\phi(m)\log(mn+b)}{mN}, n\in \Lambda_{b,m,N} \\
		0, n\notin \Lambda_{b,m,N}
	\end{array} 
	\right.
	\]
	where $\phi$ denotes the Euler's totient function. Also, let's define a function $T_{b,m,N}\colon \mathcal{C}(\Lambda_{b,m,N})\to \mathcal{C}(\mathbb{T})$ such that
	\[
	T_{b,m,N}(f)(\xi)=\mathcal{F}[f\lambda_{b,m,N}](\xi)=\sum_{k\in \mathbb{Z}}f(k)\lambda_{b,m,N}(k)e^{2\pi i k \xi}\text{ for all }\xi\in\mathbb{T}
	\]
	
\end{definition}
We will abuse notation and sometimes treat $\lambda_{b,m,N}$ as a measure on $\Lambda_{b,m,N}$ in the obvious way, namely
\[
\lambda_{b,m,N}(A)=\sum_{n\in A}\lambda_{b,m,N}(n)\text{ for all }A\subseteq \Lambda_{b,m,N}
\]
The Siegel–Walfisz Theorem allows us to think that $\lambda_{b,m,N}$ is, loosely speaking, a probability measure on $\Lambda_{b,m,N}$. More precisely we have the following theorem.

\begin{theorem}[Siegel–Walfisz]
	Let $D>0$, $N,m,b\in\mathbb{N}$ such that $b \le m-1$, $(m,b)=1$ and $m\le \log^D(N)$, then
	\begin{equation}\label{S-W1}
		\psi_1(N;m,b)=\sum_{\substack{n\in[N]\\n\equiv b\Mod{m}}}\Lambda(n)=\frac{N}{\phi(m)}+O_D\bigg(\frac{N}{\log^D(N)}\bigg)
	\end{equation}
	and 
	\begin{equation}\label{S-W2}
		\psi_2(N;m,b)=\sum_{\substack{p\in\mathbb{P}_N\\p\equiv b\Mod{m}}}\log(p)=\frac{N}{\phi(m)}+O_D\bigg(\frac{N}{\log^D(N)}\bigg)
	\end{equation}
	where the implied constant depends only on $D$.
\end{theorem}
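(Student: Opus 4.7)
The plan is to prove the Siegel--Walfisz theorem via the standard analytic approach through Dirichlet $L$-functions. Using the orthogonality of Dirichlet characters modulo $m$, one writes
\[
\psi_1(N;m,b) = \frac{1}{\phi(m)}\sum_{\chi \bmod m} \bar\chi(b)\, \psi(N,\chi), \qquad \psi(N,\chi) := \sum_{n\le N}\chi(n)\Lambda(n).
\]
The principal character contributes $\psi(N,\chi_0) = N + O((\log N)(\log m))$ after excising the primes dividing $m$ and invoking the classical prime number theorem with de la Vall\'ee Poussin's error term; this yields the desired main term $N/\phi(m)$. The task therefore reduces to showing that for every non-principal character $\chi \bmod m$ with $m\le \log^D N$, one has $\psi(N,\chi) \ll_D N\exp(-c_D\sqrt{\log N})$, which is considerably stronger than the claimed $O_D(N/\log^D N)$.

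For this I would use Perron's formula to represent $\psi(N,\chi)$ as a truncated contour integral of $-(L'/L)(s,\chi)\cdot N^s/s$, and then shift the contour to the left of the line $\Re s = 1$ along a curve lying inside the classical de la Vall\'ee Poussin zero-free region $\sigma > 1 - c_0/\log(m(|t|+2))$. Since $\chi$ is non-principal there is no pole at $s=1$, and the horizontal and vertical pieces of the contour produce contributions of acceptable size by standard estimates on $L'/L$ inside the zero-free region and on the truncation line.

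The main obstacle, and the step responsible for the ineffective dependence on $D$, is the possible existence of a \emph{Siegel zero} $\beta_\chi$ close to $1$ coming from a single real primitive character; such a zero would contribute a term $N^{\beta_\chi}/\beta_\chi$ in the contour shift that must be controlled. To handle it I would invoke \emph{Siegel's theorem}: for every $\varepsilon>0$ there exists an ineffective constant $C(\varepsilon)>0$ such that every real zero $\beta_\chi$ of $L(s,\chi)$ with $\chi\bmod m$ satisfies $\beta_\chi < 1 - C(\varepsilon)m^{-\varepsilon}$. Choosing $\varepsilon = 1/(2D)$ and using $m\le \log^D N$ gives $1-\beta_\chi \ge C(\varepsilon)(\log N)^{-1/2}$, so $N^{\beta_\chi}\ll N\exp(-C(\varepsilon)\sqrt{\log N}) \ll_D N/\log^D N$; this is precisely the source of the ineffective $D$-dependence of the implied constant. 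Finally, the estimate \eqref{S-W2} for $\psi_2$ follows from \eqref{S-W1} because $\psi_1-\psi_2$ collects contributions from prime powers $p^k$ with $k\ge 2$, totaling $O(\sqrt{N}\log N)$, which is absorbed into the error term.
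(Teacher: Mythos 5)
Your proposal is correct and follows essentially the same route as the paper: the paper simply cites the standard reference (Iwaniec--Kowalski, Corollary~5.29) for~\eqref{S-W1}, whose proof is exactly the character-orthogonality, Perron's-formula, zero-free-region, and Siegel's-theorem argument you sketch, and then passes from~\eqref{S-W1} to~\eqref{S-W2} by the same elementary observation you make, namely that $\psi_1 - \psi_2$ is supported on higher prime powers and is $O(\sqrt{N}\log N)$. The only difference is that you unpack the cited result rather than invoking it as a black box.
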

\begin{proof}
	For the proof of $\ref{S-W1}$, see \cite{IWKO}, Corollary 5.29, page 124. Through standard elementary estimates we obtain
	\[
	\big|\psi_1(N,m,b)-\psi_2(N,m,b)\big|\lesssim \sqrt{N}
	\]
which implies the desired result.	
\end{proof}
Applying the result for $D=1$ shows that $\lambda_{b,m,N}(\Lambda_{b,m,N})=\sum_{n\in\Lambda_{b,m,N}}\lambda_{b,m,N}(n)\to 1$, as $N\to \infty$, justifying the previous heuristic.
\begin{theorem}[Bourgain--Green]\label{B-G}Suppose that $r>2$ is a real number. Then there exists a positive constant $C_r$ such that for all functions $f\colon\Lambda_{b,m,N}\to\mathbb{C}$ we have
	\[
	||T_{b,m,N}(f)||_{L^r(\mathbb{T})}\le C_rN^{-1/r}||f||_{L^2(\Lambda_{b,m,N},\lambda_{b,m,N})}	
	\]
\end{theorem}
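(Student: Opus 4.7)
The plan is to establish this restriction estimate by combining a Tomas--Stein $TT^{*}$ argument with the Hardy--Littlewood circle method, following the strategy of Bourgain and Green. First I would dualize: by the standard $TT^{*}$ identity, the desired inequality
$$\|T_{b,m,N}f\|_{L^r(\mathbb{T})}\le C_r N^{-1/r}\|f\|_{L^2(\lambda_{b,m,N})}$$
is equivalent to the convolution estimate
$$\|K_N * g\|_{L^r(\mathbb{T})}\le C_r^{2} N^{-2/r}\|g\|_{L^{r'}(\mathbb{T})},$$
where $K_N(\xi)=\sum_{n} \lambda_{b,m,N}(n)\,e^{2\pi i n\xi}$ is a normalized weighted exponential sum over primes in the progression $b\Mod m$. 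Reformulating the problem as a convolution estimate for $K_N$ shifts the work to obtaining good pointwise control of this kernel.

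Next I would perform a Hardy--Littlewood decomposition of $\mathbb{T}$ into major arcs $\mathfrak{M}$ and minor arcs $\mathfrak{m}$ at a threshold $Q=Q(N)$ chosen as a small power of $\log N$. On $\mathfrak{m}$, the aim is to prove $\sup_{\xi\in\mathfrak{m}}|K_N(\xi)|\lesssim_A (\log N)^{-A}$ for arbitrary $A>0$; this is a Vinogradov-type bound obtained from Vaughan's identity by splitting $K_N$ into Type I and Type II bilinear sums and applying Weyl/Van der Corput differencing whenever the Dirichlet approximation of $\xi$ has large denominator. On $\mathfrak{M}$, I would appeal to the Siegel--Walfisz Theorem stated in the excerpt to approximate $K_N$ by an explicit main term of the form $\frac{\mu(q')}{\phi(q')}\cdot \frac{1}{N}\int_{0}^{N}e^{2\pi i \beta t}dt$ (with $q'=q/(q,m)$ and $\xi=a/q+\beta$), i.e. a multiplicative coefficient times a Fejér-type kernel, which is itself a standard restriction object.

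With these pointwise bounds in hand I would combine the minor-arc smallness, the major-arc model, and the trivial estimate $\|K_N\|_{\infty}\lesssim \log N$ via Young's convolution inequality and complex interpolation to obtain the convolution bound for some single exponent $r_0>2$. The main obstacle, which is precisely the core technical contribution of Bourgain's argument in this setting, is then the \emph{epsilon-removal}: upgrading a restriction estimate at a single exponent $r_0$ (with polynomial dependence on $N$) to an estimate valid for every $r>2$ with a constant independent of $N$. This is achieved by a combinatorial pigeonhole/covering argument that isolates the arithmetically structured concentration of $g$ near rationals of small denominator and exploits the multiplicative structure of the major-arc approximation; carrying this out in the arithmetic-progression-restricted setting of $\Lambda_{b,m,N}$ requires only that all of the circle-method inputs above are uniform in the congruence class $b\Mod m$ when $m\le \log N$, which is the content of the Siegel--Walfisz bound. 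Once the $\varepsilon$-removal step is complete, the restriction estimate holds for the full range $r>2$, establishing the theorem.
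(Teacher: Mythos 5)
The paper does not prove this statement: Theorem~\ref{B-G} is imported verbatim from Green's paper (Theorem~2.1 there), and the ``proof'' in the present paper is simply that citation. So there is no internal argument to compare against; the relevant comparison is with Green's Section~2, which is itself an adaptation of Bourgain's restriction argument for arithmetic sets.

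Your $TT^{*}$ reduction, the Hardy--Littlewood split, the Vaughan/Vinogradov bound on minor arcs, and the Siegel--Walfisz major-arc model are all the right ingredients, and they are precisely the inputs Green uses. The gap is in the final step. Young's inequality alone cannot close the argument: it would require $\|K_N\|_{L^{r/2}(\mathbb{T})}\lesssim N^{-2/r}$, but the minor arcs have measure close to $1$ and carry a kernel of size only $(\log N)^{-A}$, so $\int_{\mathfrak{m}}|K_N|^{r/2}\gtrsim(\log N)^{-Ar/2}\gg N^{-1}$ for every fixed $A$ --- the minor-arc contribution already ruins the $L^{r/2}$ norm of $K_N$. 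Interpolating the $L^1\to L^\infty$ bound $\|K_N\|_\infty\lesssim1$ against the $L^2\to L^2$ bound $\|\widehat{K_N}\|_{\ell^\infty}=\|\lambda_{b,m,N}\|_{\ell^\infty}\lesssim\log N/N$ does give the estimate, but with an irremovable $(\log N)^{2/r}$ loss, uniformly in $r>2$; and epsilon removal (in the Bourgain/Tao sense) is designed to strip $N^{\varepsilon}$ losses at the cost of raising the Lebesgue exponent, not to lower the exponent below the range where one's preliminary bound is lossless. What Green (following Bourgain) actually does is avoid interpolation at a single exponent altogether: he decomposes the dual kernel according to the arithmetic height of the Dirichlet approximation of $\xi$ (equivalently, by dyadic level sets of $|K_N|$, which by Siegel--Walfisz and the minor-arc bound are localized near rationals $a/q$ with $\phi(q)\lesssim\delta^{-1}$ and hence have measure $\lesssim\delta^{-2}/N$), and then sums the contributions of these pieces directly; the convergence of $\sum_q q\,\phi(q)^{-r}$ for $r>2$ is what makes the full range of exponents accessible. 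Your sketch gestures at this with the phrase ``combinatorial pigeonhole/covering argument,'' but as written the interpolation-plus-epsilon-removal scheme does not reach $r$ near $2$, which is exactly the range the theorem asserts.
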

\begin{proof}
	This result can be found in \cite{BenGreen}, see Theorem 2.1, page 3.
\end{proof}

We now introduce the sets and measures analogous to $\Lambda_{b,m,N}$ and $\lambda_{b,m,N}$ that will allow us to state the restriction Theorem for the set $\mathbb{P}_B$.
\begin{definition}\label{DefOur}
	For any $N\in\mathbb{N}$ and $m,b\in\mathbb{N}$ such that $0\le b \le m-1$ and $(m,b)=1$ and $m\le \log(N)$, let \[\mathcal{P}_{b,m,N}=\{\,n\in\{\,1,\cdots,N\,\}:\,mn+b\in\mathbb{P}_B\,\}
	\]
	and
	\[
	\rho_{b,m,N}(n)=\left\{
	\begin{array}{ll}
		\psi(mn+b)^{-1}\cdot\frac{\phi(m)\log(mn+b)}{mN}, n\in \mathcal{P}_{b,m,N} \\
		0, n\notin \mathcal{P}_{b,m,N}
	\end{array} 
	\right.
	\]
	where $\phi$ denotes the Euler's totient function. Also, let's define a function $T^B_{b,m,N}\colon \mathcal{C}(\mathcal{P}_{b,m,N})\to \mathcal{C}(\mathbb{T})$ such that
	\[
	T^B_{b,m,N}(f)(\xi)=\mathcal{F}[f\rho_{b,m,N}](\xi)=\sum_{k\in \mathbb{Z}}f(k)\rho_{b,m,N}(k)e^{2\pi i k \xi}\text{ for all }\xi\in\mathbb{T}
	\]
	
\end{definition}

\begin{theorem}[Restriction Theorem for $\mathbb{P}_B$]\label{RestOur}
	Let $N\in\mathbb{N}$ and $m,b\in\mathbb{Z}$ such that $0\le b \le m-1$, $(m,b)=1$ and $m\le \log(N)$. Then for any real number $r>2+\frac{62-62\gamma_2}{16\gamma_1+17\gamma_2-32}$ there exists a positive constant $C=C(r,h_1,h_2,\psi)$ such that for all $f\colon \mathcal{P}_{b,m,N} \to \mathbb{C}$ we have
	\[
	||T^B_{b,m,N}(f)||_{L^r(\mathbb{T})}\le CN^{-1/r}||f||_{L^2(\mathcal{P}_{b,m,N},\rho_{b,m,N})}
	\]
\end{theorem}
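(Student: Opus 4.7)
The plan is to apply the Tomas--Stein $TT^*$ argument to reduce the restriction bound to a kernel estimate on $\mathbb{T}$. By duality,
\[
\|T^B_{b,m,N}\|_{L^2(\rho_{b,m,N}) \to L^r(\mathbb{T})}^2 = \|T^B_{b,m,N}(T^B_{b,m,N})^*\|_{L^{r'} \to L^r},
\]
and a direct computation identifies $T^B_{b,m,N}(T^B_{b,m,N})^*$ as convolution on $\mathbb{T}$ with the kernel $K_B := \mathcal{F}[\rho_{b,m,N}]$. Young's convolution inequality then reduces the whole problem to proving $\|K_B\|_{L^{r/2}(\mathbb{T})} \lesssim N^{-2/r}$.

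I would decompose $K_B = K_{\mathbb{P}} + E$, where $K_{\mathbb{P}} := \mathcal{F}[\lambda_{b,m,N}]$ is the analogous kernel for the primes. Bourgain's Restriction Theorem for the primes (Theorem~\ref{B-G}) together with the same $TT^*$ reduction immediately gives $\|K_{\mathbb{P}}\|_{L^{r/2}(\mathbb{T})} \lesssim N^{-2/r}$ for every $r > 2$, so everything comes down to controlling the error $E$. The change of variables $p = mn + b$ rewrites $K_B(\xi) - K_{\mathbb{P}}(\xi)$, up to a harmless prefactor $\phi(m)/(mN) \cdot e(-b\xi/m)$, as the difference of the two exponential sums appearing in Lemma~\ref{PR2} at frequency $\xi/m$ with $a=b$ and $q=m$. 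Hence Lemma~\ref{PR2} yields a uniform pointwise estimate $\|E\|_{L^\infty(\mathbb{T})} \lesssim N^{-\chi-\chi'}$, where $\chi$ may be taken arbitrarily close to the maximum value $(16\gamma_1 + 17\gamma_2 - 32)/31$ allowed by the hypothesis of Lemma~\ref{PR2}.

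To upgrade this $L^\infty$ bound to the required $L^{r/2}$ bound I would complement Lemma~\ref{PR2} with a lower-exponent estimate derived from the structure of $\rho_{b,m,N} - \lambda_{b,m,N}$. Plancherel directly yields $\|E\|_{L^2(\mathbb{T})}^2 = \sum_n |\rho_{b,m,N}(n) - \lambda_{b,m,N}(n)|^2$, and bounding the summands using the size $\psi(p)^{-1} \lesssim p^{1-\gamma_2}$ of the weights together with the counting function for $\mathbb{P}_B$ furnished by Theorem~\ref{SFPB} gives a decay reflecting the density $\gamma_2$ of $\mathbb{P}_B$ in $\mathbb{P}$. Interpolating this cruder estimate against the sharp $L^\infty$ bound from Lemma~\ref{PR2} via the log-convexity of $L^p$-norms, and optimizing the parameter $\chi$ up to its maximum permitted value, produces $\|E\|_{L^{r/2}(\mathbb{T})} \lesssim N^{-2/r}$ precisely in the claimed range
\[
r > 2 + \frac{62(1-\gamma_2)}{16\gamma_1 + 17\gamma_2 - 32}.
\]

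The main obstacle is this final interpolation: the $L^\infty$ decay rate $N^{-\chi}$ coming from Lemma~\ref{PR2} is quite modest, so it must be balanced carefully against the low-$L^p$ estimate on $E$ which encodes the density parameter $\gamma_2$. The constant $62 = 2\cdot 31$ in the admissible exponent is the product of the factor $2$ arising from the $L^{r/2}$ side of the $TT^*$ reduction and the denominator $31$ from the hypothesis of Lemma~\ref{PR2}, while the $(1-\gamma_2)$ factor encodes the loss due to the weight $\psi^{-1}$ that distinguishes $\rho_{b,m,N}$ from $\lambda_{b,m,N}$.
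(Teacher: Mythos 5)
Your proposal matches the paper up to the $TT^*$ reduction and the decomposition $K_B = K_{\mathbb{P}} + E$, but the next step — reducing everything to the kernel bound $\|K_B\|_{L^{r/2}(\mathbb{T})} \lesssim N^{-2/r}$ via Young's inequality — introduces a genuine loss that the paper avoids, and as a result your method cannot reach the claimed threshold $r > 2 + \frac{62(1-\gamma_2)}{16\gamma_1+17\gamma_2-32}$.

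The issue is that Young's inequality is not sharp for the $L^2 \to L^2$ endpoint. The paper works directly with operator norms of the convolution operator with kernel $E = \mathcal{F}[\rho_{b,m,N}-\lambda_{b,m,N}]$: the $L^2 \to L^2$ norm of this operator equals the Fourier-multiplier norm $\|\rho_{b,m,N}-\lambda_{b,m,N}\|_{\ell^\infty(\mathbb{Z})} \approx N^{-\gamma_2}$ (up to slowly varying factors), while the $L^1 \to L^\infty$ norm is $\|E\|_{L^\infty(\mathbb{T})} \lesssim N^{-\chi-\chi'}$ from Lemma~\ref{PR2}; Riesz--Thorin on these two operator bounds gives $L^{r'} \to L^r$ norm $\lesssim A^{2/r}B^{1-2/r}$ with $A\approx N^{-\gamma_2}$, $B\approx N^{-\chi}$, leading to the condition $\chi(r-2) \ge 2(1-\gamma_2)$, i.e.\ $r>2+\frac{2(1-\gamma_2)}{\chi}$. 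Your route instead interpolates the kernel norms $\|E\|_{L^2}$ (from Plancherel, of size $\approx A^{1/2}$ since $\|\rho-\lambda\|_{\ell^1}=O(1)$) and $\|E\|_{L^\infty}\approx B$ to bound $\|E\|_{L^{r/2}}$ and then applies Young; for $r\ge 4$ this gives $\|E\|_{L^{r/2}}\lesssim A^{2/r}B^{1-4/r}$, which compared to the paper's $A^{2/r}B^{1-2/r}$ loses a full power of $B$. The resulting condition is $\chi(r-4)\ge 2(1-\gamma_2)$, i.e.\ $r>4+\frac{2(1-\gamma_2)}{\chi}$ — a strictly worse threshold, and moreover for $r<4$ (which the claimed range contains once $\gamma_1,\gamma_2$ are close to $1$) the $L^2$--$L^\infty$ kernel interpolation does not even apply. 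So the last step of your argument, ``optimizing the parameter $\chi$ $\ldots$ produces $\|E\|_{L^{r/2}}\lesssim N^{-2/r}$ precisely in the claimed range,'' does not follow.

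There is a second, smaller gap: you assert that Theorem~\ref{B-G} ``together with the same $TT^*$ reduction immediately gives $\|K_{\mathbb{P}}\|_{L^{r/2}(\mathbb{T})}\lesssim N^{-2/r}$.'' The $TT^*$ argument runs the other direction: a kernel estimate implies an operator bound implies a restriction estimate, not conversely. What Green's paper actually furnishes (and what this paper cites, at (2.7)) is the operator bound $\|T_{b,m,N}T_{b,m,N}^*\|_{L^{r'}\to L^r}\lesssim N^{-2/r}$; there is no need for (and no direct access to) the stronger kernel bound on $K_{\mathbb{P}}$. Staying at the level of operator norms throughout, as the paper does, both sidesteps this citation issue and, as explained above, achieves the sharper threshold.
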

\begin{proof}
	We will use the $TT^*$ argument and interpolation. Firstly, note that for $g\in L^1(\mathbb{T})$ and $f\in\mathcal{C}(\mathcal{P}_{b,m,N})$ we have
	\[
	\langle T^B_{b,m,N}(f),g\rangle_{L^2(\mathbb{T})}=\int_0^1 \mathcal{F}[f\rho_{b,m,N}](\xi)\overline{g(\xi)}d\xi=\sum_{n\in\mathbb{Z}}f(n)\rho_{b,m,N}(n)\overline{\hat{g}(n)}
	\]
	\[
	=\sum_{n\in \mathcal{P}_{b,m,N}}f(n)\overline{\hat{g}(n)1_{\mathcal{P}_{b,m,N}}(n)} \rho_{b,m,N}(n)=\langle f,\hat{g}1_{\mathcal{P}_{b,m,N}}\rangle_{L^2(\mathcal{P}_{b,m,N},\rho_{b,m,N})}
	\]
	We remark that $(\mathcal{C}(\mathcal{P}_{b,m,N}))^*\cong \mathcal{C}(\mathcal{P}_{b,m,N})$ as Banach Spaces through the map $h\to \Phi_h$, where $\Phi_h(f)=\langle f,h\rangle_{L^2(\mathcal{P}_{b,m,N},\rho_{b,m,N})}$ and also that any $L^p(\mathbb{T})$, $p\ge 1$ can be embedded into $(C(\mathbb{T}))^*$  via the map $k\to \Psi_k$, 
	where $\Psi_k(g)=\langle g,k \rangle_{L^2(\mathbb{T})}$. We have shown that $\Psi_g(T^B_{b,m,N}(f))=\Phi_{\hat{g}1_{\mathcal{P}_{b,m,N}}}(f)$ and thus $(T^B_{b,m,N})^*(\Psi_g)=\Phi_{\hat{g}1_{\Lambda_{b,m,N}}}$ and we will abuse notation and write $(T^B_{b,m,N})^*(g)(n)=\hat{g}(n)1_{\mathcal{P}_{b,m,N}}(n)$. Let's notice that
	\[
	T^B_{b,m,N}(T^B_{b,m,N})^*(g)=T^B_{b,m,N}(\hat{g}1_{P_{b,m,N}})=\mathcal{F}(\hat{g}1_{\mathcal{P}_{b,m,N}}\rho_{b,m,N})=g*\mathcal{F}[\rho_{b,m,N}]
	\]
	and we note that a similar calculation shows that $T_{b,m,N}T^*_{b,m,N}(g)=g*\mathcal{F}[\lambda_{b,m,N}]$. Similarly to the previous restriction Theorem, it is enough to show that \begin{equation}\label{GoalN}
		||T^B_{b,m,N}(T^B_{b,m,N})^*||_{L^{r'}(\mathbb{T})\to L^r(\mathbb{T})}\le CN^{-2/r}
	\end{equation}To see this, let $f\in L^r(\mathcal{P}_{b,m,N},\rho_{b,m,N})$ and $g\in L^{r'}(\mathbb{T})$, then
	\[
	|\langle T^B_{b,m,N}(f),g\rangle_{L^2(\mathbb{T})}|=|\langle f,(T^B_{b,m,N})^*(g)\rangle_{L^2(\mathcal{P}_{b,m,N},\rho_{b,m,N})}|\le ||(T^B_{b,m,N})^*(g)||_{L^2(\mathcal{P}_{b,m,N},\rho_{b,m,N})}||f||_{L^2(\mathcal{P}_{b,m,N},\rho_{b,m,N})}
	\]
	and also
	\[
	||(T^B_{b,m,N})^*(g)||_{L^2(\mathcal{P}_{b,m,N},\rho_{b,m,N})}^2=\langle T^B_{b,m,N}(T^B_{b,m,N})^*(g),g  \rangle_{L^2(\mathbb{T})}\le
	\]
	
	\[ ||T^B_{b,m,N}(T^B_{b,m,N})^*(g)||_{L^r(\mathbb{T})}||g||_{L^{r'}(\mathbb{T})}\le ||T^B_{b,m,N}(T^B_{b,m,N})^*||_{L^{r'}(\mathbb{T})\to L^r(\mathbb{T}) }||g||_{L^{r'}(\mathbb{T})}^2  
	\]
	Thus
	\[
	|\langle T^B_{b,m,N}(f),g\rangle_{L^2(\mathbb{T})}|\le ||T^B_{b,m,N}(T^B_{b,m,N})^*||^{1/2}_{L^{r'}(\mathbb{T})\to L^r(\mathbb{T}) }||g||_{L^{r'}(\mathbb{T})}  ||f||_{L^2(\mathcal{P}_{b,m,N},\rho_{b,m,N})}
	\]which justifies the fact that proving $\ref{GoalN}$ suffices for concluding our proof. We note that
	\[
	||T^B_{b,m,N}(T^B_{b,m,N})^*(g)||_{L^{r}(\mathbb{T})}=||g*\mathcal{F}[\rho_{b,m,N}]||_{L^{r}(\mathbb{T})}\le ||g*\mathcal{F}[\lambda_{b,m,N}]||_{L^{r}(\mathbb{T})}+||g*\mathcal{F}[\rho_{b,m,N}-\lambda_{b,m,N}]||_{L^{r}(\mathbb{T})}\le 
	\]
	\[
	||T_{b,m,N}T^*_{b,m,N}(g)||_{L^{r}(\mathbb{T})}+||g*\mathcal{F}[\rho_{b,m,N}-\lambda_{b,m,N}]||_{L^{r}(\mathbb{T})}\le
	\]
	\begin{equation}\label{onlyerrorterm} ||T_{b,m,N}T^*_{b,m,N}||_{L^{r'}(\mathbb{T})\to L^{r}(\mathbb{T})}||g||_{L^{r'}(\mathbb{T})}+||g*\mathcal{F}[\rho_{b,m,N}-\lambda_{b,m,N}]||_{L^{r}(\mathbb{T})}
	\end{equation}
	By the proof of Bourgain–Green’s theorem, see (2,7) in page 4 in \cite{BenGreen}, we know that there exists a positive constant $C'_r$ such that $||T_{b,m,N}T^*_{b,m,N}||_{L^{r'}(\mathbb{T})\to L^{r}(\mathbb{T})}\le C'_rN^{-2/r}$ and thus it suffices to estimate the second term in $\ref{onlyerrorterm}$ which we may think of as an error term. We show that there exists a constant $C=C(r,h_1,h_2,\psi)$ such that 
	\begin{equation}\label{finale}
		||g*\mathcal{F}[\rho_{b,m,N}-\lambda_{b,m,N}]||_{L^{r}(\mathbb{T})}\le C N^{-2/r}||g||_{L^{r'}(\mathbb{T})}
	\end{equation}
	We prove this for $r=2$, $r=\infty$ and interpolate. For $g\in L^2(\mathbb{T})$, we have
	\[
	||g*\mathcal{F}[\rho_{b,m,N}-\lambda_{b,m,N}]||_{L^2(\mathbb{T})}\le ||\hat{g}(\rho_{b,m,N}-\lambda_{b,m,N})||_{\ell^2(\mathbb{Z})}\le ||\rho_{b,m,N}-\lambda_{b,m,N}||_{\ell^{\infty}(\mathbb{Z})}||\hat{g}||_{\ell^2(\mathbb{Z})}\le
	\]
	\[
	(||\rho_{b,m,N}||_{\ell^{\infty}(\mathbb{Z})}+||\lambda_{b,m,N}||_{\ell^{\infty}(\mathbb{Z})})||g||_{L^2(\mathbb{T})}
	\]
	and $||\lambda_{b,m,N}||_{\ell^{\infty}(\mathbb{Z})}\lesssim \frac{\log(N)}{N}$, since $\lambda_{b,m,N}(n)\le \frac{\log(\log(N)N+\log(N))}{N}\lesssim \frac{\log(N)}{N}$. Now let's estimate $||\rho_{b,m,N}||_{\ell^{\infty}(\mathbb{Z})}$; for all $n\in \mathcal{P}_{b,m,N}$ we have
	\[
	|\rho_{b,m,N}(n)|=\frac{\phi(m)\log(mn+b)}{mN\psi(mn+b)}\lesssim\frac{\log(N)}{N\varphi_2'(mn+b)}\le \frac{\log(N)}{N\varphi_2'(m(N+1))}\lesssim
	\]
	\[
	\lesssim \frac{m\log(N)}{mN\varphi_2'(mN)}\lesssim   \frac{\log^2(N)}{\varphi_2(mN)}\le\frac{\log^2(N)}{\varphi_2(\log(N)N)} 
	\]
	where we have used: $\psi(x)\sim\varphi_2'(x)$, $\varphi_2$ is concave and $b\le m-1\le \log(N)$, Lemma 2.14 from \cite{MMR} for the form of $x\varphi_2'(x)$ and $\varphi_2'(x)\simeq \varphi_2(2x)$. By Lemma 2.6 in \cite{MMR}, we have that $\lim_{x\to\infty}\varphi_2(x)/x=0$ which gives $\frac{\log(N)}{N}\lesssim\frac{\log^2(N)}{\varphi_2(\log(N)N)}$	and thus
	\begin{equation}\label{est21}
		||g*\mathcal{F}[\rho_{b,m,N}-\lambda_{b,m,N}]||_{L^2(\mathbb{T})}\lesssim \frac{\log^2(N)}{\varphi_2(\log(N)N)}||g||_{L^2(\mathbb{T})}
	\end{equation}
	On the other hand
	\[
	||g*\mathcal{F}[\rho_{b,m,N}-\lambda_{b,m,N}]||_{L^{\infty}(\mathbb{T})}\le ||\mathcal{F}[\rho_{b,m,N}-\lambda_{b,m,N}]||_{L^{\infty}(\mathbb{T})}||g||_{L^1(\mathbb{T})} 
	\]
	Since $c_1\in[1,32/31)$ and $c_2\in[1,34/33)$, we have that $16(1-\gamma_1)+17(1-\gamma_2)<1$, let $\chi>0$ be such that $16(1-\gamma_1)+17(1-\gamma_2)+31\chi=1$, according to Lemma~$\ref{PR2}$, there exists $\chi'>0$ such that \ref{EXT2} is valid. Again let's estimate; let $\xi\in \mathbb{T}$ and let $\xi'=\xi/m$, we get
	
	\[
	|\mathcal{F}[\rho_{b,m,N}-\lambda_{b,m,N}](\xi)|=\Big|\sum_{n\in \mathcal{P}_{b,m,N}}\rho_{b,m,N}(n)e(n\xi) -\sum_{n\in\Lambda_{b,m,N}}\lambda_{b,m,N}(n)e(n\xi)\Big|=\]
	
	\[\frac{\phi(m)}{mN}\Big|\sum_{n\in \mathcal{P}_{b,m,N}}\psi(mn+b)^{-1}\log(mn+b)e(nm\xi')-\sum_{n\in\Lambda_{b,m,N}}\log(mn+b)e(nm\xi') \Big|\le
	\]
	\[
	\frac{\phi(m)}{mN} \Big|\sum_{\substack{k\in \mathbb{P}_B\cap[mN+b] \\ k\equiv b\Mod m}}\psi(k)^{-1}e(k\xi')-\sum_{\substack{k\in \mathbb{P}\cap[mN+b]\\k\equiv b\Mod{m}}}e(k\xi') \Big|\lesssim \frac{\phi(m)}{mN}(mN+b)^{1-\chi-\chi'}\lesssim
	\]
	\[
	 m^{1-\chi-\chi'}N^{-\chi-\chi'}\lesssim N^{-\chi-\chi'/2}
	\]
Let $\varepsilon=\chi'/2$, and note that we have
	
	\begin{equation}\label{est22}
		||g*\mathcal{F}[\rho_{b,m,N}-\lambda_{b,m,N}]||_{L^{\infty}(\mathbb{T})}\lesssim \frac{1}{N^{\chi+\varepsilon}}||g||_{L^1(\mathbb{T})}
	\end{equation}
	By applying Riesz-Thorin Interpolation to $\ref{est21}$ and $\ref{est22}$ we conclude that for any $r\in(1,+\infty)$ and $g\in L^{r'}(\mathbb{T})$ we have
	\[
	||g*\mathcal{F}[\rho_{b,m,N}-\lambda_{b,m,N}]||_{L^{r}(\mathbb{T})}\lesssim_{r}(N^{-\chi-\varepsilon})^{1-2/r}\bigg(\frac{\log^2(N)}{\varphi_2(\log(N)N)}\bigg)^{2/r}||g||_{L^{r'}(\mathbb{T})}
	\]
	Let $\varepsilon_r$ be a sufficiently small positive real number which will be chosen later. We know that $\varphi_2(x)\gtrsim_{\varepsilon_{r}} x^{\gamma_2-\varepsilon_r}$. Thus we have
	\[
	||g*\mathcal{F}[\rho_{b,m,N}-\lambda_{b,m,N}]||_{L^{r}(\mathbb{T})}\lesssim_{r,\varepsilon_{r}}N^{-
		\chi-\varepsilon+2\chi/r+2\varepsilon/r}\log^{4/r}(N)\big((\log(N)N)^{-\gamma_2+\varepsilon_r}\big)^{2/r}||g||_{L^{r'}(\mathbb{T})}=
	\]  
	\[
	\big(\log(N)\big)^{4/r-2\gamma_2/r+2\varepsilon_r/r}N^{-\chi-\varepsilon+2\chi/r+2\varepsilon/r-2\gamma_2/r+2\varepsilon_r/r}||g||_{L^{r'}(\mathbb{T})}
	\]
	We wish to have
	\begin{equation}\label{G222}
		-\chi-\varepsilon+2\chi/r+2\varepsilon/r-2\gamma_2/r+2\varepsilon_r/r<-2/r\text{, or equivalently } r>\frac{2(1+\chi+\varepsilon-\gamma_2+\varepsilon_r)}{\chi+\varepsilon}=2+\frac{2(1-\gamma_2+\varepsilon_r)}{\chi+\varepsilon}
	\end{equation}
For $\ref{G222}$ to hold, it suffices to have 
	\begin{equation}\label{Glast2}
		r>2+\frac{2(1-\gamma_2+\varepsilon_r)}{\chi}=2+\frac{62(1-\gamma_2+\varepsilon_r)}{1-16(1-\gamma_1)-17(1-\gamma_2)}=2+\frac{62-62\gamma_2+62\varepsilon_r}{16\gamma_1+17\gamma_2-32}
	\end{equation}
We have that $r>2+\frac{62-62/c_2}{16/c_1+17/c_2-32}$, and thus such a choice for $\varepsilon_r>0$ is possible and therefore we do have that $\ref{G222}$ is true, which in turn implies that
	\[
	||g*\mathcal{F}[\rho_{b,m,N}-\lambda_{b,m,N}]||_{L^{r}(\mathbb{T})}\lesssim_{r}N^{-2/r}||g||_{L^{r'}(\mathbb{T})}
	\]
	which shows $\ref{finale}$ and concludes our proof.
\end{proof}
We obtain the Hardy--Littlewood majorant property for the sets $\mathbb{P}_B$ as a Corollary. To do so, we will need some estimates for $|\mathbb{P}_B\cap [N]|$, so let's firstly prove  Theorem~$\ref{SFPB}$.
\begin{proof}[Proof of Theorem~$\ref{SFPB}$]
One may use Lemma~$\ref{PR2}$ for $\xi=0$, summation by parts, the Siegel--Walfisz Theorem and the basic properties of functions in $\mathcal{R}_c$ in order to obtain these estimates. We provide some details here. Summation by parts gives
\[
	\sum_{\substack{p\in\mathbb{P}_B\cap [N]\\p\equiv b\Mod{m}}}\log(p)=
	\psi(N)\sum_{\substack{p\in\mathbb{P}_B\cap [N]\\p\equiv b\Mod{m}}}\psi(p)^{-1}\log(p)-\int_2^N \psi'(t)\sum_{\substack{p\in\mathbb{P}_B\cap [\lfloor t\rfloor]\\p\equiv b\Mod{m}}}\psi(p)^{-1}\log(p)dt
	\]
	There exists a real number $\chi>0$ such that $16(1-\gamma_1)+17(1-\gamma_2)+31\chi<1$ and according to Proposition $\ref{PR2}$, there exists a real number $\chi'>0$ such that 
	\[
	\sum_{\substack{p\in \mathbb{P}_B\cap[L] \\ p\equiv a\Mod q}} \psi(p)^{-1}\log(p) = \sum_{\substack{p\in \mathbb{P}\cap[L] \\ p\equiv a \Mod q}} \log(p) +O(L^{1-\chi-\chi'})=\frac{L}{\phi(m)}+O_D\bigg(\frac{L}{\log^D(L)}\bigg)
	\]
	where we took into account $\ref{S-W2}$. Thus we get
	\[
	\psi_B(N;m,b)=\frac{N\psi(N)}{\phi(m)}+O_D\bigg(\frac{N\psi(N)}{\log^D(N)}\bigg)-\int_2^N \psi'(t)\bigg(\frac{\lfloor t\rfloor}{\phi(m)}+O_D\bigg(\frac{\lfloor t\rfloor}{\log^D(\lfloor t\rfloor)}\bigg)\bigg)dt
	\]and we note that by the basic properties of $\varphi_2$ and $\psi$, see Lemma~2.14 in \cite{MMR}, we get
\[
	N\psi(N)-\int_1^N\psi'(t)\lfloor t \rfloor dt=\sum_{n=1}^N\psi(n)=\int_{1}^N\psi(t)dt+O(1)\text{ and }N\psi(N)\lesssim N\varphi_2'(N)\lesssim \varphi_2(N)
	\]
Finally, we have
	\[
	\int_2^N \bigg|\psi'(t)\bigg(\frac{\lfloor t\rfloor}{\log^D(\lfloor t\rfloor)}\bigg)\bigg|dt\lesssim \int_2^N\bigg| \varphi_2''(t)\bigg(\frac{t}{\log^D(t)}\bigg)\bigg|dt\lesssim \int_2^N \varphi_2'(t)\bigg(\frac{1}{\log^D(t)}\bigg)dt=
	\]
	\[
	\varphi_2(N)/\log^D(N)-\varphi_2(2)/\log^D(2)-\int_2^N\varphi_2(t)(-D)\log^{-D-1}(t)\frac{1}{t}dt\le
	\]
	\[
	\varphi_2(N)/\log^D(N)+D\varphi_2(N)\int_{\log(2)}^{\log(N)}u^{-D-1}du\lesssim_D \varphi_2(N)/\log^D(N) 
	\]
Therefore
	\[
	\psi_B(N;m,b)=\frac{1}{\phi(m)}\int_1^N \psi(t)dt+O_D\bigg(\frac{\varphi_2(N)}{\log^D(N)}\bigg)
	\]
	and we have proved $\ref{Firstmine}$.
	For the second estimate we have
	\[
	\pi_B(N;m,b)=\frac{1}{\log(N)}\psi_B(N;m,b)-\int_2^N\psi_B(t;m,b)\bigg(\frac{1}{\log(t)}\bigg)'dt
	\]
	We have that $\psi_B(t;m,b)\lesssim \varphi_2(t)$. Let us fix positive real numbers $\varepsilon\in(0,1)$ and $\gamma_2'$ such that $\varepsilon \gamma_2<\varepsilon \gamma_2'<\gamma_2$. Then $\varphi_2(N^{\varepsilon})\lesssim N^{\varepsilon\gamma_2'}\le \varphi_2(N)/\log^2(N) $, see Lemma 2.6 in \cite{MMR}, and thus
	\[
	0<-\int_2^N\psi_B(t;m,b)\bigg(\frac{1}{\log(t)}\bigg)'dt=\int_2^N\psi_B(t;m,b)\bigg(\frac{1}{t\log^2(t)}\bigg)dt\lesssim \int_2^N \varphi_2(t)\bigg(\frac{1}{t\log^2(t)}\bigg)dt=   
	\]
	\[
	\int_{2}^{N^{\varepsilon}}\varphi_2(t)\bigg(\frac{1}{t\log^2(t)}\bigg)dt+\int_{N^{\varepsilon}}^N\varphi_2(t)\bigg(\frac{1}{t\log^2(t)}\bigg)dt\lesssim \varphi_2(N^{\varepsilon})+\frac{1}{\varepsilon\log^2(N)}\int_{N^{\varepsilon}}^N\varphi_2'(t)dt\lesssim_{\varepsilon}\frac{\varphi_2(N)}{\log^2(N)}
	\]
Finally, we combine with the previous asymptotic for $D=1$ to obtain the asymptotic $\ref{Thirdproved}$.	
\end{proof} 
We remark that $\varphi_2'\sim \psi$ implies that $\int_1^N \psi(t)dt\simeq \varphi_2(N)$ and thus $|\mathbb{P}_B\cap[N]|=\pi_{B}(N;1,0)\simeq \frac{\varphi_2(N)}{\log(N)}$ which will be a rough estimate sufficient for our purposes.
\begin{proof}[Proof of Theorem~$\ref{HLMPPB}$]

Let $N\in\mathbb{N}$ and $(a_n)_{n\in\mathbb{N}}$ be a sequence of complex numbers with $|a_n|\le 1$. Apply the restriction Theorem for $m=1$, $b=0$ and $f(n)=\frac{a_n\psi(n)}{
\log(n)}1_{\{n>1\}}(n)$. There exists  constant $C=C(r,h_1,h_2,\psi)$ such that
\[\Big\Vert\sum_{p\in\mathbb{P}_B\cap[N]}\frac{a_p}{N}e^{2\pi ip \xi} \Big\Vert_{L^r(\mathbb{T})}\le C N^{-1/r}\bigg(\sum_{p\in \mathbb{P}_B\cap [N]}\frac{|a_n|^2\psi(p)}{N\log(p)}\bigg)^{1/2}
\]
thus
\begin{equation}\label{eqmaj}
\Big\Vert\sum_{p\in \mathbb{P}_B\cap[N]}a_pe^{2\pi i \xi} \Big\Vert_{L^r(\mathbb{T})}\le C N^{-1/r}\bigg(N\sum_{p\in \mathbb{P}_B\cap [N]}\frac{\psi(p)}{\log(p)}\bigg)^{1/2}\le CN^{-1/r}\bigg(N\sum_{p\in \mathbb{P}_B\cap [N]}\frac{\varphi_2'(p)}{\log(p)}\bigg)^{1/2}
\end{equation}
On the other hand we have that
\[
\bigg\Vert\sum_{p\in \mathbb{P}_B\cap[N]} e^{2 \pi i p \xi} \bigg\Vert_{L^r(\mathbb{T})}\ge \bigg(\int_{\frac{-1}{100N}}^{\frac{1}{100N}}\Big|\sum_{p\in \mathbb{P}_B\cap[N]}e^{2\pi i p\xi}\Big|^r\bigg)^{1/r} \gtrsim \frac{|\mathbb{P}_B\cap[N]|}{N^{1/r}}\gtrsim N^{-1/r}\frac{\varphi_2(N)}{\log(N)}
\]
Finally, we will estimate the sum in $\ref{eqmaj}$ using summation by parts together with our asymptotic formula for $\pi_B(N;1,0)$. Let $\varepsilon=\frac{2\gamma_2-1}{8\gamma_2}>0$, then
\[
\sum_{p\in \mathbb{P}_B\cap [N]}\frac{\varphi_2'(p)}{\log(p)}=\pi_B(N;1,0)\frac{\varphi_2'(N)}{\log(N)}-\int_2^N\pi_B(t;1,0)\frac{\varphi_2''(t)\log(t)-\varphi_2'(t)/t}{\log^2(t)}dt\lesssim
\]
\[
\frac{\varphi_2(N)\varphi_2'(N)}{\log^2(N)}+\int_2^N\frac{\varphi_2(t)}{\log(t)}\frac{|t^2\varphi_2''(t)\log(t)-t\varphi_2'(t)|}{t^2\log^2(t)}dt\lesssim \frac{\varphi_2^2(N)}{N\log^2(N)}+\int_{2}^N\frac{\varphi_2^2(t)}{t^2\log^2(t)}dt=
\]
\[
\frac{\varphi_2^2(N)}{N\log^2(N)}+\int_{2}^{N^{\varepsilon}}\frac{\varphi_2^2(t)}{t^2\log^2(t)}dt+\int_{N^{\varepsilon}}^N\frac{\varphi_2^2(t)}{t^2\log^2(t)}dt\lesssim \frac{\varphi_2^2(N)}{N\log^2(N)}+\frac{\varphi_2^2(N^{\varepsilon})}{\varepsilon\log^2(N)}\int_{2}^{+\infty}\frac{1}{t^2}dt+\int_{N^{\varepsilon}}^N\frac{\varphi_2^2(t)}{t^2\log^2(t)}dt \lesssim_{\varepsilon}
\]
\[
\frac{\varphi_2^2(N)}{N\log^2(N)}+\frac{1}{\log^2(N)}\int_{N^{\varepsilon}}^N\frac{\varphi_2^2(t)}{t^2}dt\lesssim
\frac{\varphi_2^2(N)}{N\log^2(N)}
\]
where we have used the fact that $\varphi_2^2(x^{\varepsilon})\lesssim x^{(2\gamma_2-1)/2}\lesssim \varphi^2_2(x)/x$ and also that $\int_{N^{\varepsilon}}^N\frac{\varphi_2^2(t)}{t^2}dt\lesssim \frac{\varphi_2^2(N)}{N}$. To see this, define $\Phi_2(x)=\varphi_2^2(x)/x=x^{2\gamma_2-1}\ell^2_{\varphi_2}(x)$ and notice that one can easily show that $\Phi_2'(x)x\simeq \Phi_2(x) $. Therefore, we may write
\[
\int_{N^{\varepsilon}}^N\frac{\varphi_2^2(t)}{t^2}dt=\int_{N^{\varepsilon}}^N\frac{\Phi_2(t)}{t}dt\lesssim\int_{N^{\varepsilon}}^N\Phi_2'(t)dt\lesssim \Phi_2(N)=  \frac{\varphi_2^2(N)}{N}
\]
This concludes the proof since we have shown that
\[
\Big\Vert\sum_{p\in \mathbb{P}_B\cap[N]}a_pe^{2\pi i \xi} \Big\Vert_{L^r(\mathbb{T})}\lesssim N^{-1/r}\frac{\varphi_2(N)}{\log(N)}\lesssim \bigg\Vert\sum_{p\in \mathbb{P}_B\cap[N]} e^{2 \pi i p \xi} \bigg\Vert_{L^r(\mathbb{T})}
\]
\end{proof}

We wish to finish this section by making some remarks about the restriction Theorem for the much simpler case of the sets $B$. The rather technical Lemma~$\ref{PR2}$ is replaced by  the following.

\begin{lemma}\label{PR1} Let $c_1\in [1,2)$ and $c_2\in [1,6/5) $ and let $\gamma_1=1/c_1$ and $\gamma_2=1/c_2$. Assume we have fixed $b,m\in \mathbb{Z}$ such that $0\le b\le m-1$. Then for every $\chi>0$ such that $(1-\gamma_1)+3(1-\gamma_2)+6\chi<1$ there exists $\chi'>0$ such that

	\begin{equation}\label{EXT}
		\sum_{\substack{n\in B\cap[N] \\ n\equiv b\Mod m}} \psi(n)^{-1} e^{2\pi i n\xi}= \sum_{\substack{n\in [N] \\ n\equiv b \Mod m}} e^{2\pi i n\xi} +O(N^{1-\chi-\chi'})
	\end{equation}
where the implied constant does not depend on $\xi,N,b,m$.
	
\end{lemma}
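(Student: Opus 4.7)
The approach mirrors the strategy used in \cite{HLMP} for the exponential sum estimates on $B$. I would first encode $\mathds{1}_B(n) = \mathds{1}_{[0,\psi(n))}(\{\varphi_1(n)\})$ via a Vaaler-type truncated Fourier expansion at the level of the fractional part $\{\varphi_1(n)\}$, producing
\begin{equation*}
\mathds{1}_B(n) = \psi(n) + \sum_{1 \le |k| \le K} a_k(n) \, e(-k\varphi_1(n)) + R_K(n),
\end{equation*}
where $|a_k(n)| \lesssim \min(\psi(n), 1/|k|)$ and $R_K$ is a smooth remainder concentrated at scale $K^{-1}$ near the endpoints. Substituting into the left-hand side of \eqref{EXT}, the main contribution $\psi(n)^{-1}\psi(n) = 1$ recovers exactly $\sum_{n \le N,\,n\equiv b\Mod m} e(n\xi)$.

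It remains to estimate the oscillatory contribution
\begin{equation*}
\mathcal{E}(N,\xi) := \sum_{1 \le |k| \le K} \, \sum_{\substack{n \le N \\ n \equiv b\Mod m}} \psi(n)^{-1} a_k(n) \, e\bigl(-k\varphi_1(n) + n\xi\bigr),
\end{equation*}
together with the contribution of $R_K$. Since $\psi(n)^{-1}$ and $a_k(n)$ vary slowly (their logarithmic derivatives have admissible size by Definitions~\ref{def1}--\ref{def0} and by the basic properties of $\mathcal{R}_{c_i}$ compiled in \cite{MMR}), summation by parts reduces matters to controlling the pure exponential sums
\begin{equation*}
T_k(M) := \sum_{\substack{n \le M \\ n \equiv b\Mod m}} e\bigl(-k\varphi_1(n) + n\xi\bigr)
\end{equation*}
uniformly for $M \le N$ and $1 \le k \le K$, where the modular restriction adds no real difficulty since it can be detected by characters mod $m$ at essentially no cost.

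The key step is to invoke the exponential sum estimates proved in \cite{HLMP}, which are obtained via repeated applications of Van der Corput's inequality exploiting the sizes $\varphi_1^{(j)}(x) \simeq x^{\gamma_1 - j}$. These provide a bound of the shape $|T_k(M)| \lesssim k^{A} M^{1 - \delta(\gamma_1)}$ for an explicit $A$ and a positive gain $\delta(\gamma_1)>0$. Taking the polynomial loss $\psi(n)^{-1} \lesssim N^{1-\gamma_2+\varepsilon}$ into account, summing over $k \le K$, and treating the Vaaler remainder contribution trivially by $K^{-1} N \cdot N^{1-\gamma_2+\varepsilon}$, one obtains
\begin{equation*}
|\mathcal{E}(N,\xi)| + |\text{remainder}| \lesssim N^{1-\gamma_2+\varepsilon}\bigl(K^{A} N^{1-\delta(\gamma_1)} + K^{-1} N\bigr).
\end{equation*}
Balancing the two terms by choosing $K$ to be a small positive power of $N$ and inspecting the exponents, the hypothesis $(1-\gamma_1) + 3(1-\gamma_2) + 6\chi < 1$ is precisely what is needed to render this error $O(N^{1-\chi-\chi'})$ for some $\chi'>0$: the factor $3(1-\gamma_2)$ accounts for the weight $\psi^{-1}$ combined with the size of $a_k$ and the Van der Corput loss, while the $6\chi$ tracks the combined room left by the optimisation in $K$.

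The main obstacle will be carrying out the summation-by-parts step cleanly: $a_k(n)$ depends on $\psi(n)$, which is not polynomial, so one needs quantitative slow-variation estimates for $\psi$ and $\varphi_1$ (available from Lemma~2.14 of \cite{MMR} and the properties enjoyed by functions in $\mathcal{L}$, $\mathcal{L}_0$). Once this bookkeeping is in place, the result is indeed an essentially immediate corollary of the Van der Corput estimates from \cite{HLMP}; the delicate part of the full project is relegated to Lemma~\ref{PR2}, where the analogous expansion must be combined with Vaughan's identity to handle primes, whereas here no such arithmetic decomposition is required.
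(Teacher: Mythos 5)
Your proof takes essentially the same route as the paper: the paper's proof is simply to cite Lemma~3.2 of \cite{HLMP} (whose content is precisely the Vaaler expansion of $\mathds{1}_B$, the Van der Corput bounds for sums with phase $-k\varphi_1(n)+n\xi$, and the truncation optimisation you sketch), and then detect the congruence $n\equiv b \Mod m$ via the additive-character identity $\mathds{1}_{\{n\equiv b\Mod m\}}=\tfrac1m\sum_{s=0}^{m-1}e^{2\pi i s(n-b)/m}$, absorbing the extra linear phase $sn/m$ into $n\xi$ at no cost. You re-derive the content of that lemma in outline rather than invoke it as a black box, but the ingredients, the exponent bookkeeping, and the treatment of the arithmetic progression all match what the paper does.
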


\begin{proof}
	Use Lemma 3.2 from \cite{HLMP}, together with the identity $
	1_{\{n\in\mathbb{Z}\colon\,n\equiv b\Mod m\}}(k)=\frac{1}{m}\sum_{s=0}^{m-1}e^{2 \pi i s(k-b)/m}$.
\end{proof}
The analogues for $\mathcal{P}_{b,m,N}$ and $\rho_{b,m,N}$ are $\mathcal{M}_{b,m,N}=\{\,n\in[N]:nm+b\in B\,\}$ and
	\[
	\mu_{b,m,N}(n)=\left\{
	\begin{array}{ll}
		\frac{\psi(nm+b)^{-1}}{N}, n\in \mathcal{M}_{b,m,N} \\
		0, n\notin \mathcal{M}_{b,m,N}
	\end{array} 
	\right.\]
Finally, the restriction Theorem is the following
\begin{theorem}[Restriction Theorem for $B$]\label{REST} Let $c_1\in [1,2)$ and $c_2\in [1,6/5)$ and assume we have fixed $h_1$, $h_2$, $\psi$ and $B$ as in the introduction. Let $N,b,m\in \mathbb{Z}$ be such that $0\le b< m\le \log(N)$. For each $r> 2+\frac{12-12/c_2}{1/c_1+3/c_2-3}$, there exists a constant $C=C(r,h_1,h_2,\psi)>0$ such that 
	\[
	||{S_{b,m,N}}(f)||_{L^r(\mathbb{T})}\le C N^{-1/r} ||f||_{L^2(\mathcal{M}_{b,m,N},\mu_{b,m,N})}
	\]
	
	for all $f\in L^2(\mathcal{M}_{b,m,N},\mu_{b,m,N})$, where ${S_{b,m,N}}: \mathcal{C}(\mathcal{M}_{b,m,N})\to \mathcal{C}(\mathbb{T})$ is such that
	\[
	{S_{b,m,N}}(f)(\xi)=\mathcal{F}[f\mu_{b,m,N}](\xi)=\sum_{n\in\mathbb{Z}}f(n)\mu_{b,m,N}(n)e^{2\pi i n\xi}\text{ for all }\xi\in\mathbb{T}
	\]
		
\end{theorem}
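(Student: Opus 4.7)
The plan is to follow the $TT^*$ blueprint of Theorem~\ref{RestOur}, with a notable simplification: because $B$ (unlike $\mathbb{P}_B$) carries no primality constraint, the role played there by the Bourgain--Green theorem is played here by an elementary Dirichlet-kernel estimate, so no deep restriction theorem for primes is invoked. Throughout, the substantive exponential-sum input is Lemma~\ref{PR1}.

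First I reduce via standard $TT^*$ duality (verbatim as in Theorem~\ref{RestOur}) to proving
\[
\|S_{b,m,N} S^*_{b,m,N}\|_{L^{r'}(\mathbb{T}) \to L^r(\mathbb{T})} \lesssim N^{-2/r},
\]
where $S_{b,m,N} S^*_{b,m,N}(g) = g*\mathcal{F}[\mu_{b,m,N}]$. Introducing the flat counterpart $\nu_N(n) = N^{-1} 1_{\{0,\dots,N\}}(n)$, I split $g*\mathcal{F}[\mu_{b,m,N}] = g*\mathcal{F}[\nu_N] + g*\mathcal{F}[\mu_{b,m,N} - \nu_N]$. For the first summand, Young's inequality combined with the Dirichlet-kernel bound $\|\mathcal{F}[\nu_N]\|_{L^{r/2}(\mathbb{T})} \lesssim N^{-2/r}$ (elementary for any $r>2$) gives the desired estimate immediately. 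For the error term, I obtain $L^2 \to L^2$ and $L^1 \to L^\infty$ bounds and interpolate via Riesz--Thorin. The $L^2$ bound follows from Plancherel and reduces to estimating $\|\mu_{b,m,N}\|_{\ell^\infty(\mathbb{Z})}$: using $\psi \sim \varphi_2'$ and $x\varphi_2'(x) \simeq \varphi_2(x)$ (Lemma 2.14 of \cite{MMR}), this is $\lesssim \log(N)/\varphi_2(N)$. For the $L^\infty$ bound on $\mathcal{F}[\mu_{b,m,N}-\nu_N]$, I substitute $k = mn+b$: the sum defining $\mathcal{F}[\mu_{b,m,N}](\xi)$ becomes precisely the $\psi^{-1}$-weighted exponential sum on the left-hand side of \ref{EXT} at frequency $\xi/m$ on the progression $\{k \equiv b \Mod m\} \cap [Nm+b]$, while the unweighted progression sum on the right-hand side of \ref{EXT} reassembles (up to a harmless phase and a boundary term of size $O(1/N)$) to $N\mathcal{F}[\nu_N](\xi)$. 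Choosing $\chi>0$ with $(1-\gamma_1) + 3(1-\gamma_2) + 6\chi < 1$, Lemma~\ref{PR1} then yields
\[
\|\mathcal{F}[\mu_{b,m,N} - \nu_N]\|_{L^\infty(\mathbb{T})} \lesssim N^{-\chi - \chi'/2},
\]
the polylogarithmic factor $m \le \log N$ being absorbed by slightly reducing $\chi'$.

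Riesz--Thorin with $\theta = 1-2/r$ then produces a bound of the form $(\log N / \varphi_2(N))^{2/r} N^{-(\chi+\chi'/2)(1-2/r)}$ on the $L^{r'}\to L^r$ norm of convolution with $\mathcal{F}[\mu_{b,m,N}-\nu_N]$, and invoking $\varphi_2(N) \gtrsim_\varepsilon N^{\gamma_2 - \varepsilon}$ shows that this is $\lesssim N^{-2/r}$ precisely when $r > 2 + 2(1-\gamma_2)/\chi$. Pushing $\chi$ up to its supremum $(\gamma_1 + 3\gamma_2 - 3)/6$ permitted by Lemma~\ref{PR1} yields the advertised threshold $r > 2 + \frac{12-12/c_2}{1/c_1 + 3/c_2 - 3}$. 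The only substantive input is Lemma~\ref{PR1} itself, which the paper records as an immediate consequence of Lemma 3.2 of \cite{HLMP}; the rest is the same bookkeeping of exponents as in the proof of Theorem~\ref{RestOur}. The one mildly delicate point that I expect to require care is the $\ell^\infty$ estimate for $\mu_{b,m,N}$: its pointwise size is governed by $\varphi_2(N)^{-1}$ rather than the flat $N^{-1}$, and this polynomial loss is precisely what forces the factor $12(1-1/c_2)$ in the numerator of the final threshold.
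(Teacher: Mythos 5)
Your proof is correct and follows the approach the paper indicates (the paper itself only sketches this proof, citing Proposition 3.1 of \cite{HLMP} and pointing to the proof of Theorem~\ref{RestOur}): $TT^*$ duality, comparison of $\mu_{b,m,N}$ to the flat measure $\nu_N$, an $L^1\to L^\infty$ estimate on convolution with $\mathcal{F}[\mu_{b,m,N}-\nu_N]$ via Lemma~\ref{PR1} after the substitution $k=mn+b$, and Riesz--Thorin against the $\ell^\infty$ bound $\|\mu_{b,m,N}\|_{\ell^\infty(\mathbb{Z})}\lesssim \log N/\varphi_2(N)$. The exponent bookkeeping correctly reproduces the stated threshold $r>2+\frac{12-12/c_2}{1/c_1+3/c_2-3}$, with the elementary Dirichlet-kernel estimate $\|\mathcal{F}[\nu_N]\|_{L^{r/2}(\mathbb{T})}\lesssim N^{-2/r}$ supplying the main term in place of the Bourgain--Green input required in the $\mathbb{P}_B$ case.
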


\begin{proof}
This Theorem is a generalization of Proposition~3.1 in \cite{HLMP}. A similar argument to the one presented there works here as well. Essentially a $TT^*$ argument and interpolation are the key ingredients of the proof, similarly to the proof of Theorem~$\ref{RestOur}$, but much simpler.
\end{proof}

\begin{remark}
Let's remark that in the same spirit as in the proof of Theorem~$\ref{HLMPPB}$, this restriction theorem implies that the set $B$ has the Hardy--Littlewood majorant property. For the specific formulation and proof we point the reader to \cite{HLMP}.
\end{remark}
Finally, we wish to comment that the restriction Theorem for the set $B$ together with an appropriate Transference Principle, analogous to the one we present in the next section for the sets $\mathbb{P}_B$, are  sufficient to yield Theorem~$\ref{T1}$.
\section{Transference Principle}\label{trpr} We are now ready to prove Theorem~$\ref{T2}$. We fix $c_1$, $c_2\in [1,95/94)$, $h_1$, $h_2$, $\psi$ and $B$ as in the introduction. This implies that there exists $\chi>0$ such that $16(1-\gamma_1)+17(1-\gamma_2)+31\chi<1$, and therefore according to Proposition $\ref{PR2}$, there exists a real number $\chi'>0$ such that the estimate in $\ref{EXT2}$ holds. Throughout the discussion here we have fixed such $\chi,\chi'>0$. All the implied constants in our work in this section may depend on $h_1,h_2,\psi,\chi,\chi'$ and on nothing else unless we explicitly indicate it. We transfer our problem to $\mathbb{Z}_N=\mathbb{Z}/N\mathbb{Z}$.
\begin{lemma}\label{TPrL1}
	Let $A_0\subseteq \mathbb{P}\cap B=\mathbb{P}_B$ and assume that
	\[
	\limsup_{N\to\infty}\frac{|A_0\cap [N]|}{|\mathbb{P}_B\cap [N]|}>0\text{ or equivalently }\limsup_{N\to\infty}\frac{\log(N)|A_0\cap [N]|}{\varphi_2(N)} >0
	\]
	then
	\[
	\limsup_{N\to\infty}\frac{\log(N)|A_0\cap [N,2N]|}{\varphi_2(N)}>0
	\]
\end{lemma}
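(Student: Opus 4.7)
The first equivalence is immediate from Theorem~\ref{SFPB} (applied with $m=1$, $b=0$) and the remark that follows it: $|\mathbb{P}_B\cap[N]|\simeq \varphi_2(N)/\log N$. So I only need to prove the implication
\[
\limsup_{N\to\infty}\frac{\log(N)|A_0\cap[N]|}{\varphi_2(N)}>0\ \Longrightarrow\ \limsup_{N\to\infty}\frac{\log(N)|A_0\cap[N,2N]|}{\varphi_2(N)}>0.
\]
I argue by contradiction. Assume $\log(N)|A_0\cap[N,2N]|/\varphi_2(N)\to 0$. Fix $\varepsilon>0$ arbitrary and pick $M_0\geq 2$ so that $|A_0\cap[M,2M]|\leq \varepsilon\varphi_2(M)/\log M$ for every $M\geq M_0$.

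\textbf{Dyadic decomposition.} For $N\geq 2M_0$, set $j_0=\lceil\log_2 M_0\rceil$ and $J=\lfloor\log_2 N\rfloor$, and split
\[
|A_0\cap[1,N]|\ \leq\ 2^{j_0+1}\ +\ \sum_{j=j_0}^{J}|A_0\cap[2^j,2^{j+1}]|\ \leq\ 2^{j_0+1}\ +\ \frac{\varepsilon}{\log 2}\sum_{j=j_0}^{J}\frac{\varphi_2(2^j)}{j}.
\]
The content of the proof is the Karamata-type estimate
\begin{equation}\label{Karamata}
\sum_{j=j_0}^{J}\frac{\varphi_2(2^j)}{j}\ \lesssim\ \frac{\varphi_2(N)}{\log N}.
\end{equation}

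\textbf{Proof of \eqref{Karamata}.} Since $\varphi_2$ is increasing and doubling (it is regularly varying of index $\gamma_2\in(0,1]$), a direct comparison gives
\[
\sum_{j=j_0}^{J}\frac{\varphi_2(2^j)}{j}\ \simeq\ \int_{M_0}^{N}\frac{\varphi_2(t)}{t\log t}\,dt.
\]
Now invoke $t\varphi_2'(t)\simeq \varphi_2(t)$ (Lemma~2.14 in \cite{MMR}, already used in the excerpt) and integrate by parts:
\[
\int_2^{N}\frac{\varphi_2(t)}{t\log t}\,dt\ \simeq\ \int_2^{N}\frac{\varphi_2'(t)}{\log t}\,dt\ =\ \frac{\varphi_2(N)}{\log N}+O(1)+\int_2^{N}\frac{\varphi_2(t)}{t\log^2 t}\,dt,
\]
and the tail integral is $O(\varphi_2(N)/\log^2 N)$ by exactly the split-at-$N^{\varepsilon}$ argument carried out in the proof of Theorem~\ref{HLMPPB} (using $\varphi_2(N^{\varepsilon})\lesssim_{\varepsilon}N^{\varepsilon\gamma_2'}$ for $\gamma_2'<\gamma_2$ on the lower range, and $1/\log^2 t\leq 1/(\varepsilon\log N)^2$ together with $\int \varphi_2'$ on the upper range). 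This proves \eqref{Karamata}.

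\textbf{Conclusion.} Combining the two displays,
\[
|A_0\cap[1,N]|\ \leq\ 2^{j_0+1}+C\varepsilon\,\frac{\varphi_2(N)}{\log N},
\]
and since $\varphi_2(N)/\log N\to\infty$ the fixed constant $2^{j_0+1}$ is negligible. Therefore
\[
\limsup_{N\to\infty}\frac{\log(N)|A_0\cap[N]|}{\varphi_2(N)}\ \leq\ C\varepsilon.
\]
Letting $\varepsilon\to 0$ contradicts the hypothesis. The only mildly technical step is \eqref{Karamata}, but it is of precisely the same flavor as estimates already executed inside the proof of Theorem~\ref{HLMPPB}; no new tools are needed.
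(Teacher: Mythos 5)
Your proof is correct, and it takes a genuinely different route from the paper's. The paper argues directly: it picks $\alpha_0>0$ and infinitely many $N$ with $\log(N)|A_0\cap[N]|\ge\alpha_0\varphi_2(N)$, chooses a fixed dyadic $t=2^{-k}$ small enough (using the regular-variation bound $\varphi_2(tN)\le 2t^{\gamma_2}\varphi_2(N)$) to guarantee $|A_0\cap[tN,N]|\ge\tfrac{3\alpha_0}{4}\varphi_2(N)/\log N$, and then pigeonholes over the $k$ dyadic blocks covering $[tN,N]$ to find one with large normalized density. You instead argue by contradiction: assuming all dyadic counts $|A_0\cap[M,2M]|$ are eventually $\le\varepsilon\varphi_2(M)/\log M$, you sum over the dyadic decomposition of $[1,N]$ and invoke the Karamata-type bound $\sum_{j\le J}\varphi_2(2^j)/j\lesssim\varphi_2(2^J)/J$ to force $\log(N)|A_0\cap[N]|/\varphi_2(N)\lesssim\varepsilon$ for all large $N$, contradicting the hypothesis.

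Both proofs ultimately rest on the same input — the regular variation of $\varphi_2$ (doubling, $t\varphi_2'(t)\simeq\varphi_2(t)$, polynomial growth) — and your Karamata estimate is correctly justified via the sum-to-integral comparison, integration by parts using $\varphi_2'(t)\simeq\varphi_2(t)/t$, and the split-at-$N^{\varepsilon}$ bound on $\int\varphi_2(t)/(t\log^2 t)\,dt$ that already appears verbatim in the proof of Theorem~\ref{SFPB}. One could also verify \eqref{Karamata} more quickly by noting that $a_j=\varphi_2(2^j)/j$ satisfies $a_{j+1}/a_j\to 2^{\gamma_2}>1$, so the sum is dominated by a geometric tail and hence by its last term. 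The paper's direct argument is slightly more explicit about the constants (it exhibits $\alpha_0'$ in terms of $\alpha_0$, $C$, $k$), which is in keeping with the quantitative spirit of the section, whereas your contrapositive is cleaner to state but makes the dependence of the lower bound on the original relative density a bit less visible. Both are sound; they are just organized in opposite logical directions.
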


\begin{proof}
	We know that $|\mathbb{P}_B\cap [N]|\simeq\varphi_2(N)/\log(N)$ and thus there exists a positive constant $C$ and a natural number $N_1$ such that $|\mathbb{P}_B\cap [N]|\le C\frac{\varphi_2(N)}{\log(N)}$ for all $N\ge N_1$. We also have that there exists a positive real number $\alpha_0$ for which there are infinitely many naturals numbers $N$ such that $\frac{\log(N)|A_0\cap [N]|}{\varphi_2(N)}>\alpha_0$. We have that $\varphi_2(x)=x^{\gamma_2}\ell_{\varphi_2}(x)$ and for all real numbers $t>0$ 	we have that $\ell_{\varphi_2}(tx)\sim\ell_{\varphi_2}(x)\text{ as }x\to\infty$, see Lemma 2.6 in \cite{MMR}, page 6. Let's fix a real number $t=2^{-k}$ for some $k\in\mathbb{N}$ such that $t^{\gamma_2}<\frac{\alpha_0}{8C}$. We will have that
	\[
	\varphi_2(tN)=t^{\gamma_2}N^{\gamma_2}\ell_{\varphi_2}(tN)=t^{\gamma_2}\varphi_2(N)\frac{\ell_{\varphi_2}(tN)}{\ell_{\varphi_2}(N)}=t^{\gamma_2}\varphi_2(N)+\left(\frac{\ell_{\varphi_2}(tN)}{\ell_{\varphi_2}(N)}-1 \right)t^{\gamma_2}\varphi_2(N)
	\]
	Thus there exists a natural number $N_2$ such that for all $N\ge N_2$ we have that $\varphi_2(tN)\le 2t^{\gamma_2}\varphi_2(N)$. Let's notice that for all natural numbers $N$ such that $N\ge\max\{2N_1/t,N_2\}$ and such that $\frac{|A_0\cap [N]}{\varphi_2(N)}>\alpha_0$ we have
	\[
	|A_0\cap[tN,N]=|A_0\cap[N]|-|A_0\cap[1,tN)|\ge |A_0\cap[N]|-|\mathbb{P}_B\cap[1,tN]|\ge 
	\]
	\[
	\alpha_0\frac{\varphi_2(N)}{\log(N)}-C\frac{\varphi_2(tN)}{\log(N)}
	\ge \alpha_0\frac{\varphi_2(N)}{\log(N)}-2Ct^{\gamma_2}\frac{\varphi_2(N)}{\log(N)}
	\ge \alpha_0\frac{\varphi_2(N)}{\log(N)}-\frac{2\alpha_0}{8}\frac{\varphi_2(N)}{\log(N)}=\frac{3\alpha_0}{4}\frac{\varphi_2(N)}{\log(N)}
	\]
	We have that $k=\log_2(1/t)$ and
	
	\[
	\sum_{l=1}^k|A_0\cap[2^{l-1}tN,2^ltN]|\ge \sum_{l=1}^{k-1}|A_0\cap[2^{l-1}tN,2^ltN)|+|A_0\cap[2^{k-1}tN,2^ktN]|=|A_0\cap[tN,N]|\ge\frac{3\alpha_0}{4}\frac{\varphi_2(N)}{\log(N)}\]
Thus there exists a natural number $l\in[1,k]$ such that $|A_0\cap[2^{l-1}tN,2^ltN]|\ge \frac{3\alpha_0}{4k}\frac{\varphi_2(N)}{\log(N)}$. Since $\varphi_2$ is increasing, we have
	\[
	\frac{\log(2^{l-1}tN)|A_0\cap[2^{l-1}tN,2^ltN]|}{\varphi_2(2^{l-1}tN)}\ge \frac{3\alpha_0 }{4k}\frac{\log(2^{l-1}tN)\varphi_2(N)}{\log(N)\varphi_2(2^{l-1}tN)}\ge C_k\alpha_0
	\]
We note that $t=2^{-k}$ is fixed, $\varphi_2(2x)\lesssim \varphi_2(x)$ and that the previous inequality holds for infinitely many natural numbers. This gives that there exists $\alpha_0'>0$ such that
	\[
	\limsup_{N\to\infty}\frac{\log(N)|A_0\cap [N,2N]|}{\varphi_2(N)}>\alpha_0'
	\]
which is the desired result.
\end{proof}

\begin{lemma}\label{TPrL2}
	Assume $A_0\subseteq \mathbb{P}_B$ has positive upper relative density and thus, according to the previous lemma, there exists a positive real number $\alpha_0$ such that $\limsup_{n\to \infty} \frac{\log(n)|A_0\cap[n,2n]|}{\varphi_2(n)}>\alpha_0$. If $A_0$ does not contain 3APs, then there exists a small positive number $\alpha$ and infinitely many prime numbers $N$ with the property that for each such number there exists a set $A=A_N\subseteq \{1,2,\dotsc\, \lfloor N/2 \rfloor\}$ and an integer $W=W_N\in[1/8\log\log N,1/2\log\log N]$ such that
	\begin{itemize}
		\item[i)] $A=A_N$ has no 3APs,
		\item[ii)] $\rho_{b,m,N}(A)\ge \alpha$ for some $b\in\{0,\dotsc,m-1\}$, with $(b,m)=1$, where $m=\prod_{p\in \mathbb{P}\cap[W]}p$.
	\end{itemize}
\end{lemma}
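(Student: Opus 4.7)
The plan is to carry out a standard W-trick reduction. From the hypothesis, extract an increasing sequence $(M_k)$ of natural numbers satisfying
\[
\log(M_k)\,|A_0 \cap [M_k, 2M_k]|/\varphi_2(M_k) > \alpha_0;
\]
each $M = M_k$ will produce one of the primes $N$ asserted in the conclusion. Pick $W = W(M) \approx \tfrac{1}{4}\log\log M$ and set $m = \prod_{p \in \mathbb{P} \cap [W]} p$. By the prime number theorem $m = e^{W(1+o(1))} = (\log M)^{1/4+o(1)}$, negligible compared to $M$, so Bertrand's postulate supplies a prime $N \in [4M/m,\, 8M/m]$. Then $mN \asymp M$ and $\log\log N = \log\log M + o(1)$, which ensures that $W$ lies comfortably inside the window $[\tfrac{1}{8}\log\log N,\, \tfrac{1}{2}\log\log N]$.

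Since every prime divisor of $m$ is at most $W \ll M$, each $p \in \mathbb{P} \cap [M,2M]$ is coprime to $m$, so $A_0 \cap [M,2M]$ distributes among the $\phi(m)$ reduced residue classes modulo $m$. Pigeonhole produces a class $b$ with $(b,m)=1$ such that
\[
|A'| := \big|\{p \in A_0 \cap [M,2M] : p \equiv b \Mod{m}\}\big| \ge |A_0\cap[M,2M]|/\phi(m).
\]
Set $A = \{(p-b)/m : p \in A'\}$. The inequality $mN/2 \ge 2M$ yields $A \subseteq \{1,\ldots,\lfloor N/2 \rfloor\}$, and since $n \mapsto mn+b$ is affine and $A_0$ is 3AP-free by assumption, so is $A$.

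It remains to verify the density estimate. Unfolding the definition of $\rho_{b,m,N}$ gives
\[
\rho_{b,m,N}(A) = \sum_{p \in A'} \psi(p)^{-1} \frac{\phi(m)\log p}{mN}.
\]
On $[M,2M]$, the asymptotics $\psi \sim \varphi_2'$ together with the regular variation $\varphi_2(x) = x^{\gamma_2}\ell_{\varphi_2}(x)$ and the slow variation of $\ell_{\varphi_2}$ give $\psi(p)^{-1} \asymp p/\varphi_2(p) \asymp M/\varphi_2(M)$, while $\log p \asymp \log M$. Combining these with the pigeonhole bound on $|A'|$ and the hypothesized lower bound on $|A_0 \cap [M,2M]|$ produces
\[
\rho_{b,m,N}(A) \gtrsim \alpha_0 \cdot \frac{\varphi_2(M)}{\phi(m)\log M}\cdot \frac{M}{\varphi_2(M)}\cdot\frac{\phi(m)\log M}{mN} \;\asymp\; \alpha_0 \cdot \frac{M}{mN} \;\gtrsim\; \alpha_0,
\]
so any $\alpha$ equal to a small absolute multiple of $\alpha_0$ suffices.

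The main obstacle is coordinating four constraints at once: $N$ must be prime, $W$ must fall inside the prescribed logarithmic window around $\log\log N$, the image $A$ must sit in $[\lfloor N/2 \rfloor]$, and the normalization built into $\rho_{b,m,N}$ must absorb exactly the $\phi(m)^{-1}$ loss from the pigeonhole step. Enforcing $N \asymp M/m$ makes these compatible, and the explicit flexibility in the window $[\tfrac{1}{8}\log\log N,\tfrac{1}{2}\log\log N]$ is precisely what lets us snap $N$ to a prime via Bertrand's postulate without leaving that window.
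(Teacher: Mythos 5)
Your proposal is correct and follows essentially the same route as the paper: extract a sequence of scales from the hypothesis, take $W \approx \tfrac14\log\log$ of the scale so that $m=\prod_{p\le W}p$ is polylogarithmic, apply Bertrand's postulate to land on a prime $N$ with $mN\asymp M$, pigeonhole to a reduced residue class $b\bmod m$, and rescale affinely. The one small technical difference is where you invoke the pigeonhole principle: you pigeonhole on the \emph{cardinality} $|A'|$ and then use the uniform comparisons $\psi(p)^{-1}\asymp M/\varphi_2(M)$, $\log p\asymp\log M$ on $[M,2M]$ to convert to a lower bound on $\rho_{b,m,N}(A)$, whereas the paper pigeonholes directly on the weighted sum $\sum_k 1_{A_0\cap P_{b,m}}(k)\psi(k)^{-1}\log(k)$. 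These are equivalent precisely because of the uniformity of $\psi$ and $\log$ on a dyadic block, which you correctly note. One tiny point worth making explicit (the paper does so in passing): since $A\subseteq\{1,\dotsc,\lfloor N/2\rfloor\}$, absence of 3APs in $\mathbb Z$ automatically gives absence of 3APs in $\mathbb Z_N$, which is the form actually used downstream.
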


\begin{proof}
	Since $\limsup_{n\to \infty} \frac{\log(n)|A_0\cap[n,2n]|}{\varphi_2(n)}>\alpha_0$, we will have infinitely many even numbers $n\in\mathbb{N}$ such that $|A_0\cap[n/2,n]|\ge \frac{\alpha_0\varphi_2(n/2)}{2\log(n/2)}\gtrsim\frac{\alpha_0\varphi_2(n)}{\log(n)}$. Let $W=\lfloor1/4\log\log(n)\rfloor$ and $m=\prod_{p\in\mathbb{P}\cap[W]}p$, and notice that $m=\prod_{p\in\mathbb{P}\cap[W]}p\le 4^W\le e^{2/4\log\log(n)}=\log^{1/2}(n)$. According to Bertrand’s postulate, we know that there exists a prime number $N\in[2n/m,4n/m]$. We have that $W\in[1/8\log\log(N),1/2\log\log(N)]$ and also that
	\[
	\sum_{\substack{b\in\{0,\dotsc,m-1\}\\(b,m)=1}}\sum_{k=n/2}^n 1_{A_0\cap P_{b,m}}(k)= |A_0\cap[n/2,n]|\gtrsim \frac{\alpha_0\varphi_2(n)}{\log(n)}\text{ where }P_{b,m}=\{n\in\mathbb{Z}\colon\,n\equiv b\Mod m\}
	\]
We have $\psi(x)\simeq \varphi_2'(x)  \simeq \varphi_2(x)/x$ and $\varphi_2(x)\simeq \varphi_2(2x)$, and thus we get
	\[
	\sum_{\substack{b\in\{0,\dotsc,m-1\}\\(b,m)=1}}\sum_{k=n/2}^n 1_{A_0\cap P_{b,m}}(k)\psi(k)^{-1}\log(k)\gtrsim \sum_{\substack{b\in\{0,\dotsc,m-1\}\\(b,m)=1}}\sum_{k=n/2}^n 1_{A_0\cap P_{b,m}}(k)\varphi_2'(k)^{-1}\log(k) \gtrsim
	\]
	\[
	\log(n)\varphi_2'(n)^{-1}\sum_{\substack{b\in\{0,\dotsc,m-1\}\\(b,m)=1}}\sum_{k=n/2}^n 1_{A_0\cap P_{b,m}}(k)\gtrsim n\log(n)\varphi_2(n)^{-1}\sum_{\substack{b\in\{0,\dotsc,m-1\}\\(b,m)=1}}\sum_{k=n/2}^n 1_{A_0\cap P_{b,m}}(k) \gtrsim \alpha_0 n
	\]
	By the pigeonhole principle there exists $b\in\{0,\dotsc,m-1\}$ with $(b,m)=1$ and such that 
	\[
	\sum_{k=n/2}^n 1_{A_0\cap P_{b,m}}(k)\psi(k)^{-1}\log(k)\gtrsim \alpha_0 n/\phi(m)
	\]
	Let $A=A_N=\frac{1}{m}(A_0\cap P_{b,m}\cap\{n/2,\dotsc,n\}-b)$ and notice that $A\subseteq \{1,\dotsc,\lfloor N/2\rfloor\}$. Since $A_0$ does not have any 3APs, neither will $A$, and notice that this means that it will not have such progressions even when considered as a subset of $\mathbb{Z}_N$. Finally, notice that $A\subseteq \mathcal{P}_{b,m,N}$ and with a change of variables we get
	\begin{equation}\label{FP2}
		\rho_{b,m,N}(A)=\sum_{l\in A}\rho_{b,m,N}(l)=\sum_{k=n/2}^n 1_{A_0\cap P_{b,m}}(k)\psi(k)^{-1}\frac{\phi(m)\log(k)}{mN}\gtrsim \alpha_0 \frac{n}{mN}\ge \alpha_0/4
	\end{equation}
\end{proof}
From now on we fix $A_0\subseteq \mathbb{P}_B$ with positive upper relative density and we assume for the sake of a contradiction that it does not contain any 3APs. We see that Lemmas $\ref{TPrL1}$ and $\ref{TPrL2}$ are applicable.
\begin{lemma}\label{Allbut0}
	Let $N\in\mathbb{P}$, $W\in[1/8\log\log N,1/2\log\log N]$, $m$ and $b$ be the integers of the previous lemma. Then for sufficiently large $N$ we get
	\[
	\sup_{\xi\in\mathbb{Z}_N\setminus\{0\}} \left| \mathcal{F}_{\mathbb{Z}_N}[\rho_{b,m,N}](\xi) \right|\lesssim \log\log W/W
	\]
\end{lemma}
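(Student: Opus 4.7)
The plan is to replace the measure $\rho_{b,m,N}$ by the prime-counting measure $\lambda_{b,m,N}$ via the exponential sum estimate of Lemma~\ref{PR2}, and then invoke the pseudorandomness bound for $\lambda_{b,m,N}$ that was established by Green in \cite{BenGreen} using the Siegel--Walfisz theorem. This is precisely the standard W-trick estimate underlying the transference proof of Roth's theorem in the primes.

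First, I would unfold the definition of $\rho_{b,m,N}$ and perform the change of variables $k=mn+b$. Since $\mathcal{F}_{\mathbb{Z}_N}[\rho_{b,m,N}](\xi)=\sum_n \rho_{b,m,N}(n)\, e^{-2\pi i n\xi/N}$ and $\rho_{b,m,N}$ is supported on $\mathcal{P}_{b,m,N}\subseteq\{1,\dots,N\}$, this rewrites as
\[
\mathcal{F}_{\mathbb{Z}_N}[\rho_{b,m,N}](\xi) = \frac{\phi(m)}{mN}\, e^{2\pi i b\xi/(mN)} \sum_{\substack{k\in \mathbb{P}_B\cap[mN+b]\\ k\equiv b\Mod m}} \psi(k)^{-1}\log(k)\, e^{-2\pi i k\xi/(mN)}.
\]
The identical manipulation for $\lambda_{b,m,N}$ produces the same prefactor times the analogous sum over $\mathbb{P}\cap[mN+b]$ with weight $\log(k)$ replacing $\psi(k)^{-1}\log(k)$. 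Applying Lemma~\ref{PR2} with truncation $mN+b$ in place of $N$ and real frequency $-\xi/(mN)$ shows that these two exponential sums differ by at most $O((mN+b)^{1-\chi-\chi'})$ uniformly in $\xi$. Since $m\leq \log^{1/2}(N)$ by the proof of Lemma~\ref{TPrL2}, the resulting contribution to the Fourier transform is
\[
\frac{\phi(m)}{mN}\,(mN+b)^{1-\chi-\chi'} \lesssim \phi(m)\,m^{-\chi-\chi'}\,N^{-\chi-\chi'}=O(N^{-\chi-\chi'/2}),
\]
so that
\[
\mathcal{F}_{\mathbb{Z}_N}[\rho_{b,m,N}](\xi) = \mathcal{F}_{\mathbb{Z}_N}[\lambda_{b,m,N}](\xi) + O(N^{-\chi-\chi'/2}).
\]

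The final step is to invoke the classical pseudorandomness estimate $\sup_{\xi\in\mathbb{Z}_N\setminus\{0\}}|\mathcal{F}_{\mathbb{Z}_N}[\lambda_{b,m,N}](\xi)|\lesssim \log\log W/W$, which is the W-trick bound for the primes proved in Green \cite{BenGreen} as a consequence of Siegel--Walfisz together with standard manipulations of Ramanujan sums. Since $W\simeq \log\log N$, the polylogarithmic quantity $\log\log W/W$ dominates the polynomially decaying error $N^{-\chi-\chi'/2}$ for large $N$, and combining the two bounds gives the claim. There is essentially no conceptual obstacle here, as both ingredients are already in place; the only care required is the bookkeeping needed to ensure that Lemma~\ref{PR2} applies uniformly at the real frequency $-\xi/(mN)\in\mathbb{T}$ (its implied constant is independent of $\xi$, so uniformity is automatic) and to account consistently for the phase $e^{2\pi i b\xi/(mN)}$, of modulus one, on both sides of the comparison.
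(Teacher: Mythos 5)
Your proposal is correct and follows essentially the same route as the paper: change variables $k=mn+b$ in the finite Fourier transform, apply Lemma~\ref{PR2} at frequency $-\xi/(mN)$ with truncation $mN+b$ to replace $\rho_{b,m,N}$ by $\lambda_{b,m,N}$ with a polynomially small error (using $m\lesssim\log N$ to absorb the $m$-dependence), and then invoke Green's Siegel--Walfisz--based estimate $\sup_{\xi\ne 0}|\mathcal{F}_{\mathbb{Z}_N}[\lambda_{b,m,N}](\xi)|\lesssim\log\log W/W$. The only cosmetic difference is that you track the unimodular phase $e^{2\pi i b\xi/(mN)}$ explicitly, while the paper suppresses it; this makes no difference to the bound.
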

\begin{proof}
	We will use the fact that for sufficiently large $N$ we have that
	\[
	\sup_{\xi\in\mathbb{Z}_N}\big|\mathcal{F}_{\mathbb{Z}_N}[\lambda_{b,m,N}](\xi)\big|\le 2\log\log W/W
	\]
	which has been established in Green's work, see \cite{BenGreen}, Lemma 6.2, page 17, together with our estimate
	\[
	\sum_{\substack{n\in \mathbb{P}\cap B_N \\ n\equiv b\Mod m}} \psi(n)^{-1}\log(n) e(n\xi)= \sum_{\substack{n\in \mathbb{P}\cap[N] \\ n\equiv b \Mod m}} \log(n)e(n\xi) +O(N^{1-\chi-\chi'})\text{, where }e(x)=e^{2 \pi i x}
	\]
	We have
	\[
	\sup_{\xi\in\mathbb{Z}_N\setminus\{0\}} \left| \mathcal{F}_{\mathbb{Z}_N}[\rho_{b,m,N}](\xi) \right|\le \sup_{\xi\in\mathbb{Z}_N\setminus\{0\}} \left| \mathcal{F}_{\mathbb{Z}_N}[\rho_{b,m,N}](\xi)-\mathcal{F}_{\mathbb{Z}_N}[\lambda_{b,m,N}](\xi) \right|+\sup_{\xi\in\mathbb{Z}_N\setminus\{0\}} \left| \mathcal{F}_{\mathbb{Z}_N}[\lambda_{b,m,N}](\xi) \right| \le
	\]
	\[
	\sup_{\xi\in\mathcal{F}_{\mathbb{Z}_N}\setminus\{0\}}\bigg|\sum_{\substack{n\in\{1,\dotsc,N\}\\nm+b\in\mathbb{P}_B }}\frac{\phi(m)\log(mn+b)}{\psi(mn+b)mN}e^{-2 \pi i n\xi/N}-\sum_{\substack{n
			\in\{1,\dotsc,N\}\\nm+b\in\mathbb{P}}} \frac{\phi(m)\log(mn+b)}{mN}e^{-2 \pi i n\xi/N}\bigg| +
	2\log\log W/W=
	\]
	\[
	\sup_{\xi\in\mathcal{F}_{\mathbb{Z}_N}\setminus\{0\}}\bigg|\sum_{\substack{n\in\{1,\dotsc,N\}\\nm+b\in\mathbb{P}_B }}\frac{\phi(m)\log(mn+b)}{\psi(mn+b)mN}e^{\frac{-2 \pi i (nm+b)\xi}{Nm}}-\sum_{\substack{n
			\in\{1,\dotsc,N\}\\nm+b\in\mathbb{P}}} \frac{\phi(m)\log(mn+b)}{mN}e^{\frac{-2 \pi i (nm+b)\xi}{Nm}}\bigg| +
	2\log\log W/W\lesssim
	\]
	\[
	\sup_{\xi\in\mathcal{F}_{\mathbb{Z}_N}\setminus\{0\}}\bigg|\sum_{\substack{k\in\mathbb{P}\cap B_{mN+b}\\k\equiv b\Mod{m} }}\frac{\phi(m)\log(k)}{\psi(k)mN}e^{\frac{-2 \pi i k\xi}{Nm}}-\sum_{\substack{n\in \mathbb{P}\cap[mN+b] \\ n\equiv b \Mod m}} \frac{\phi(m)\log(k)}{mN}e^{\frac{-2 \pi i k\xi}{Nm}}\bigg| +N^{-1}\log(N)+
	2\log\log W/W\lesssim
	\]
	\[
	\frac{(mN+b)^{1-\chi-\chi'}}{N}+2\log\log W/W\lesssim N^{-\chi}+2\log\log W/W\lesssim \log\log(N)
	\]
\end{proof}
We define a new measure on $\mathbb{Z}_N$ by letting
$
a(S)=\sum_{k\in S} 1_{S\cap A}(k) \rho_{b,m,N}(k)$ for any $S\subseteq \mathbb{Z}_N
$, where we are considering $\rho_{b,m,N}$ as a function on $\mathbb{Z}_N$ in the obvious way.
According to Lemma $\ref{TPrL2}$, we will have that $a(\mathbb{Z}_N)\ge \alpha$. Now, we define yet another measure on $\mathbb{Z}_N$. Let $\delta,\varepsilon\in(0,1)$ be numbers that will be chosen later and define
\[
R=\{\,\xi\in\mathbb{Z}_N:\,\,|\mathcal{F}_{\mathbb{Z}_N}[a](\xi)|\ge \delta\,\}
\]
If $R=\{\xi_1,\dotsc,\xi_k\}$ with $|R|=k$, then define
\[
B=\{\,x\in\mathbb{Z}_N:\,\,\sup_{i\in [k]}\Big\|\frac{x\xi_i}{N}\Big\| \le\varepsilon\,\}\text{, where } \|x\|=\min\{|x-n|\colon n\in\mathbb{Z}\}
\]
We will have that $|B|\ge \varepsilon^k N$, see Lemma 4.20 in \cite{TV}, page 166, and thus $B$ is non-empty for sufficiently big $N$. Let $\beta(y)=\frac{1}{|B|}1_B(y)$ for all $y\in\mathbb{Z}_N$. Finally, let $a_1=a*\beta*\beta$, and note that $a_1(\mathbb{Z}_N)\ge \alpha$.
\begin{lemma}\label{L4HL3}
	Let $N\in\mathbb{P}$, $W\in[1/8\log\log N,1/2\log\log N]$, $m$ and $b$ be the integers as before and assume that $\varepsilon^k\ge \log\log W/W$, then
	\[
	||a_1||_{\ell^{\infty}(\mathbb{Z}_N)}\lesssim 1/N
	\]
\end{lemma}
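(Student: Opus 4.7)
The plan is to Fourier-expand $a_1$ on $\mathbb{Z}_N$ and split the frequency sum according to the spectral set $R$. Writing
\[
a_1(x)=\frac{1}{N}\sum_{\xi\in\mathbb{Z}_N}\mathcal{F}_{\mathbb{Z}_N}[a](\xi)\,\mathcal{F}_{\mathbb{Z}_N}[\beta](\xi)^2\,e^{2\pi i x\xi/N},
\]
I would decompose into three pieces: the main term $\xi=0$, the large spectrum $\xi\in R\setminus\{0\}$, and the small spectrum $\xi\notin R$. The main term contributes $a(\mathbb{Z}_N)/N$, and by Theorem~\ref{SFPB} one has $\rho_{b,m,N}(\mathbb{Z}_N)\asymp 1$, so $a(\mathbb{Z}_N)\lesssim 1$ and this piece is $\lesssim 1/N$.

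For the large-spectrum piece I would control $|R|$ via the restriction theorem. Applying Theorem~\ref{RestOur} to $f=\mathbf{1}_A$ yields $\|T^{B}_{b,m,N}(\mathbf{1}_A)\|_{L^r(\mathbb{T})}\lesssim N^{-1/r}\alpha^{1/2}$, and a Marcinkiewicz--Zygmund-type discretisation applied to the trigonometric polynomial $\mathcal{F}[a]$ of degree at most $N$ upgrades this to $\sum_{\xi\in\mathbb{Z}_N}|\mathcal{F}_{\mathbb{Z}_N}[a](\xi)|^r\lesssim\alpha^{r/2}$. Because every $\xi\in R$ satisfies $|\mathcal{F}_{\mathbb{Z}_N}[a](\xi)|\ge\delta$, this forces $|R|\le\delta^{-r}\alpha^{r/2}$, a bound independent of $N$. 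Combined with the trivial estimates $|\mathcal{F}_{\mathbb{Z}_N}[a](\xi)|\le a(\mathbb{Z}_N)\lesssim 1$ and $|\mathcal{F}_{\mathbb{Z}_N}[\beta](\xi)|\le 1$, the large-spectrum contribution is at most $|R|/N\lesssim 1/N$.

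For the small-spectrum piece I would use the pointwise bound $|\mathcal{F}_{\mathbb{Z}_N}[a](\xi)|<\delta$ together with Parseval for $\beta$:
\[
\frac{1}{N}\sum_{\xi\notin R}|\mathcal{F}_{\mathbb{Z}_N}[a](\xi)||\mathcal{F}_{\mathbb{Z}_N}[\beta](\xi)|^2 \le \frac{\delta}{N}\sum_{\xi\in\mathbb{Z}_N}|\mathcal{F}_{\mathbb{Z}_N}[\beta](\xi)|^2=\delta\,\|\beta\|_{\ell^2(\mathbb{Z}_N)}^2=\frac{\delta}{|B|}.
\]
The lower bound $|B|\ge\varepsilon^k N$ from Lemma~4.20 of \cite{TV} together with the hypothesis $\varepsilon^k\ge\log\log W/W$ ensures that $\delta/|B|\lesssim 1/N$ with an implicit constant depending on the free parameters $\delta,\varepsilon,k$; these will eventually be fixed in terms of $\alpha$ so the constant is harmless.

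The main obstacle will be the transfer step that turns the continuous restriction bound of Theorem~\ref{RestOur} (stated on $\mathbb{T}$) into the discrete $\ell^r(\mathbb{Z}_N)$ estimate used to bound $|R|$: Marcinkiewicz--Zygmund is the clean way to do this, but one has to verify that $\mathcal{F}[a]$ extended to $\mathbb{Z}$ with support in $[0,N]$ genuinely has degree at most $N$ so that sampling at the $N$-th roots of unity gives the right $L^r$ comparison.
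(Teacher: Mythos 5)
Your approach has a genuine gap that breaks both non-trivial pieces of the decomposition, and the problem is the same in each case: the constant in the conclusion of Lemma~\ref{L4HL3} must depend only on $h_1,h_2,\psi,\chi,\chi'$ (this is essential, since this constant $C$ feeds into Lemma~\ref{LBound}, where $\alpha/(4C)$ is handed to Sanders' theorem and must stay comparable to $\alpha$), whereas both of your error terms carry hidden dependence on $\delta$ and $\varepsilon$, which are later chosen to be $e^{-C_4\alpha^{-1}\log^5(1/\alpha)}$ and $e^{-C_5\alpha^{-1}\log^5(1/\alpha)}$.

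Concretely: in your large-spectrum piece you get a contribution of order $|R|/N \le \delta^{-r}\alpha^{r/2}/N$, so the implied constant blows up like $\delta^{-r}$, which is not acceptable. Worse, your small-spectrum bound $\delta/|B| \le \delta W/(N\log\log W)$ is \emph{not} $\lesssim 1/N$ at all: $\delta$ is a fixed constant depending only on $\alpha$, while $W\asymp\log\log N\to\infty$, so $\delta W/\log\log W\to\infty$. The hypothesis $\varepsilon^k\ge\log\log W/W$ only tells you $|B|\ge N\log\log W/W$; it cannot rescue an estimate whose numerator $\delta$ does not decay with $N$. The idea you are missing is the one the paper actually uses: first bound $a\le\rho_{b,m,N}$ pointwise so that $a_1\le\rho_{b,m,N}*\beta*\beta$, then expand $\rho_{b,m,N}*\beta*\beta$ in Fourier and split only into $\xi=0$ and $\xi\neq0$. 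The $\xi\neq0$ piece is then controlled by Lemma~\ref{Allbut0}, which gives $\sup_{\xi\neq0}|\mathcal{F}_{\mathbb{Z}_N}[\rho_{b,m,N}](\xi)|\lesssim\log\log W/W$ — a quantity that \emph{does} decay with $N$ — combined with Parseval $\sum_{\xi}|\mathcal{F}_{\mathbb{Z}_N}[\beta](\xi)|^2=N/|B|$, yielding $|B|^{-1}\log\log W/W\le 1/N$. Splitting over $R$ is the wrong move here precisely because $\mathcal{F}_{\mathbb{Z}_N}[a]$ has no small $\ell^\infty$ bound off $R$ other than $\delta$, which does not decay; the exponential sum estimate on $\rho_{b,m,N}$ is what makes the lemma work with an absolute constant.
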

\begin{proof}
	Here we use the Fourier Inversion Formula together with Lemma~$\ref{Allbut0}$ to obtain
	\[
	a_1(x)=a*\beta*\beta(x)\le \rho_{b,m,N}*\beta*\beta(x)=\frac{1}{N}\mathcal{F}_{\mathbb{Z}_N}^{-1}[\mathcal{F}_{\mathbb{Z}_N}[\rho_{b,m,N}*\beta*\beta]](x)=
	\]
	\[
	\frac{1}{N}\sum_{\xi\in\mathbb{Z}_N}\mathcal{F}_{\mathbb{Z}_N}[\rho_{b,m,N}](\xi)\mathcal{F}_{\mathbb{Z}_N}[\beta](\xi)^2e^{\frac{2 \pi i \xi x}{N}}\le
	\]
	\[
	\frac{1}{N}\mathcal{F}_{\mathbb{Z}_N}[\rho_{b,m,N}](0)\mathcal{F}_{\mathbb{Z}_N}[\beta](0)^2+\frac{1}{N}\sup_{\zeta\in\mathbb{Z}\setminus\{0\}}\big|\mathcal{F}_{\mathbb{Z}_N}[\rho_{b,m,N}](\zeta)\big|\sum_{\xi\in\mathbb{Z}_N\setminus\{0\}}\mathcal{F}_{\mathbb{Z}_N}[\beta](\xi)^2
	\]
	It is not difficult to see that $\mathcal{F}_{\mathbb{Z}_N}[\rho_{b,m,N}](0)\lesssim 1$ and $\mathcal{F}_{\mathbb{Z}_N}[\beta](0)=1$. We also have
	
	\[
	\sum_{n\in\mathbb{Z}_N\setminus\{0\}}\left|\mathcal{F}_{\mathbb{Z}_N}[\beta](n)^2\right|\le \sum_{n\in\mathbb{Z}_N}\left(\mathcal{F}_{\mathbb{Z}_N}[\beta](n)\right)\overline{\left(\mathcal{F}_{\mathbb{Z}_N}[\beta](n)\right)}=\sum_{n\in\mathbb{Z}_N}\sum_{m\in\mathbb{Z}_N}\sum_{k\in\mathbb{Z}_N}\beta(m)\beta(k)e^{2\pi in m/N}e^{-2\pi in k/N}=
	\] 
	\[
	=\sum_{n\in\mathbb{Z}_N}\sum_{m\in\mathbb{Z}_N}|\beta(m)|^2+\sum_{m\in\mathbb{Z}_N}\sum_{k\in\mathbb{Z}_N\setminus\{m\}}\beta(m)\beta(k)\sum_{n\in\mathbb{Z}_N}e^{2\pi in m/N}e^{-2\pi in k/N} =N|B|^{-1}
	\]
	Putting everything together, we use the previous Lemma as well as the fact that $|B|\ge \varepsilon^k N \ge N\log\log W/W$ to obtain
	\[
	|a_1(x)|\lesssim 1/N +|B|^{-1}\log\log W/W\lesssim 1/N
	\]
	as desired.
\end{proof}
We now use the Restriction Theorem for the set $\mathbb{P}_B$, to obtain a discrete version of our Restriction Theorem which is called discrete majorant property.

\begin{lemma}[Discrete majorant property]\label{DscMjP}
	Assume that $r>2+\frac{62-62/c_2}{16/c_1+17/c_2-32}$, then there exists a positive constant $C=C(r,h_1,h_2,\psi)$ such that 
	\[
	||\mathcal{F}_{\mathbb{Z}_N}[a]||_{\ell^{r}(\mathbb{Z}_N)}\le C
	\]
\end{lemma}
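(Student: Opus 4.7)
The strategy is to deduce this discrete restriction estimate from the continuous Restriction Theorem $\ref{RestOur}$ via a sampling argument. The key observation is that for every $\xi\in\mathbb{Z}_N$,
\[
\mathcal{F}_{\mathbb{Z}_N}[a](\xi) \;=\; \sum_{k\in A}\rho_{b,m,N}(k)\,e^{-2\pi i k\xi/N} \;=\; T^B_{b,m,N}(1_A)(-\xi/N),
\]
so that $\mathcal{F}_{\mathbb{Z}_N}[a]$ is obtained by sampling the trigonometric polynomial $F:=T^B_{b,m,N}(1_A)$ at the $N$th roots of unity. By Lemma~$\ref{TPrL2}$ we have $A\subseteq \{1,\ldots,\lfloor N/2\rfloor\}$, so the frequencies appearing in $F$ all lie in $\{1,\ldots,\lfloor N/2\rfloor\}$. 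After multiplying $F$ by the unimodular phase $e^{-2\pi i \lfloor N/4\rfloor\theta}$, I obtain a trigonometric polynomial of degree at most $\lceil N/4\rceil$, which is well below the Nyquist rate for $N$ equally spaced samples; the standard Marcinkiewicz--Zygmund inequality then yields
\[
\|\mathcal{F}_{\mathbb{Z}_N}[a]\|_{\ell^r(\mathbb{Z}_N)}^r \;=\; \sum_{\xi=0}^{N-1} |F(-\xi/N)|^r \;\lesssim_r\; N\,\|F\|_{L^r(\mathbb{T})}^r.
\]

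Having reduced matters to a continuous $L^r$-norm, I would apply Theorem~$\ref{RestOur}$ with $f=1_A\in L^2(\mathcal{P}_{b,m,N},\rho_{b,m,N})$; for $r>2+\frac{62-62\gamma_2}{16\gamma_1+17\gamma_2-32}$ this gives
\[
\|F\|_{L^r(\mathbb{T})} \;\le\; C\,N^{-1/r}\|1_A\|_{L^2(\mathcal{P}_{b,m,N},\rho_{b,m,N})} \;=\; C\,N^{-1/r}\,\rho_{b,m,N}(A)^{1/2}.
\]
The remaining ingredient is the uniform mass bound $\rho_{b,m,N}(A)\le\rho_{b,m,N}(\mathcal{P}_{b,m,N})\lesssim 1$. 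Unfolding the definition,
\[
\rho_{b,m,N}(\mathcal{P}_{b,m,N}) \;=\; \frac{\phi(m)}{mN}\sum_{\substack{p\in\mathbb{P}_B\cap[mN+b]\\ p\equiv b\Mod{m}}}\psi(p)^{-1}\log p,
\]
and Lemma~$\ref{PR2}$ at $\xi=0$ replaces the $\psi^{-1}$-weighted prime sum by the ordinary prime sum up to an $O(N^{1-\chi-\chi'})$ error; the Siegel--Walfisz theorem then identifies the main term as $mN/\phi(m)\,(1+o(1))$, so $\rho_{b,m,N}(\mathcal{P}_{b,m,N})=1+o(1)$. Combining the three estimates,
\[
\|\mathcal{F}_{\mathbb{Z}_N}[a]\|_{\ell^r(\mathbb{Z}_N)} \;\lesssim_r\; N^{1/r}\cdot N^{-1/r}\cdot 1 \;=\; O_r(1),
\]
which is the stated bound.

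The only point demanding care is the Marcinkiewicz--Zygmund step: the $\ell^r$--$L^r$ equivalence with an $N$-independent constant requires the frequency support of $F$ to be substantially smaller than the number of sample points, and this is precisely what the containment $A\subseteq\{1,\ldots,\lfloor N/2\rfloor\}$, which was engineered into Lemma~$\ref{TPrL2}$, guarantees after a phase modulation. Everything else is a formal packaging of the continuous restriction estimate of Theorem~$\ref{RestOur}$ together with the total mass calculation for $\rho_{b,m,N}$, so no new analytic input is needed beyond what has already been established.
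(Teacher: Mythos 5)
Your proposal is correct and follows essentially the same route as the paper: identify $\mathcal{F}_{\mathbb{Z}_N}[a]$ as the sampling of the trigonometric polynomial $T^B_{b,m,N}(1_A)$ at $N$ equally spaced points, invoke the Marcinkiewicz--Zygmund inequality (which is admissible because $A\subseteq\{1,\dotsc,\lfloor N/2\rfloor\}$ by Lemma~\ref{TPrL2}) to pass to $N\Vert T^B_{b,m,N}(1_A)\Vert_{L^r(\mathbb{T})}^r$, then apply Theorem~\ref{RestOur} together with the mass bound $\rho_{b,m,N}(A)\lesssim 1$. You spell out two steps the paper leaves implicit, namely why the frequency support allows Marcinkiewicz--Zygmund with an $N$-independent constant (phase modulation to degree $\approx N/4$) and why $\rho_{b,m,N}(\mathcal{P}_{b,m,N})=1+o(1)$ (Lemma~\ref{PR2} at $\xi=0$ plus Siegel--Walfisz), but these are elaborations rather than a different argument.
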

\begin{proof}
	We will use Theorem~$\ref{RestOur}$ together with Marcinkiewicz–Zygmund theorem, see Lemma 6.5 in \cite{BenGreen}. Let's notice that
	\[
	||\mathcal{F}_{\mathbb{Z}_N}[a]||^r_{\ell^r(\mathbb{Z}_N)}=\sum_{k\in\mathbb{Z}_N}|\mathcal{F}_{\mathbb{Z}_N}[a](k)|^r=\sum_{k=0}^{N-1}\bigg|\sum_{l=1}^Na(l)e^{2 \pi i l \frac{k}{N}}\bigg|^r=\sum_{k=0}^{N-1}\bigg|\mathcal{F}_{\mathbb{Z}}[a](k/N)\bigg|^r\lesssim_{r}
	\]
	\[
	N\int_{\mathbb{T}}\big|\mathcal{F}_{\mathbb{Z}}[a](t)\big|^rdt=N\int_{\mathbb{T}}\big|\mathcal{F}_{\mathbb{Z}}[1_A\rho_{b,m,N}](t)\big|^rdt=N||T^B_{b,m,N}(1_A)||^r_{L^r(\mathbb{T})}\lesssim_{r,h_1,h_2,\psi}||1_A||^r_{L^2(P_{b,m,N},\rho_{b,m,N})}\lesssim 1
	\]
	
\end{proof}
We finish the proof by introducing and estimating certain trilinear forms. Let $\Lambda$ be the following trilinear form
\[
\Lambda(f,g,h)=\sum_{x,d\in\mathbb{Z}_N}f(x)g(x+d)h(x+2d)
\]
where $f,g,h\colon \mathbb{Z}_N\to\mathbb{C}$ are arbitrary functions. By the Fourier Inversion formula the following useful identity is holds whenever $N$ is odd
\[
\Lambda(f,g,h)=\frac{1}{N}\sum_{\xi\in\mathbb{Z}_N}\mathcal{F}_{\mathbb{Z}_N}[f](\xi)\mathcal{F}_{\mathbb{Z}_N}[g](-2\xi)\mathcal{F}_{\mathbb{Z}_N}[h](\xi)
\]
Let's notice that since all the sets $A=A_N$ produced by Lemma $\ref{TPrL2}$ do not contain 3APs, we have that
\[
\Lambda(a,a,a)=\sum_{n\in\mathbb{Z}_N}a(x)^3\le \sum_{n=1}^N\rho_{b,m,N}(n)^3=\sum_{\substack{n\in[N]\\mn+b\in\mathbb{P}_B}}\bigg(\frac{\phi(m)\log(nm+b)}{mN\psi(nm+b)}\bigg)^3\lesssim
\]
\[
\frac{\log^3(Nm+b)}{N^3}\sum_{\substack{n\in[N]\\mn+b\in\mathbb{P}_B}}\frac{1}{\varphi_2'(nm+b)^3}\le\frac{N\log^3(Nm+b)}{N^3\varphi_2'(Nm+b)^3} \lesssim \frac{m^3N\log^3(Nm)}{\varphi_2^3(Nm)^3}\lesssim \frac{N\log^3(N)\log^3(N\log(N))}{\varphi_2^3(N\log(N))}\lesssim_{\varepsilon_1}
\]
\[
\frac{N\log^6(N)}{(N\log(N))^{3\gamma_2-3\varepsilon_1}}\lesssim_{\varepsilon_1} N^{1-3\gamma_2+4\varepsilon_1}\le N^{-3/2}
\]
where we have completed the estimates by using the basic properties of $\varphi_2$, see Lemma~2.6 in \cite{MMR}, and by choosing a positive number $\varepsilon_1<\frac{3\gamma_2-5/2}{4}$, which is possible for $\gamma_2\in (94/95,1]$.
\begin{lemma}\label{UBound}
	For any $r>2+\frac{62-62/c_2}{16/c_1+17/c_2-32}$, there exists a positive constant $C_1=C_1(r,h_1,h_2,\psi)$ such that 
	\begin{equation}
		\Lambda(a_1,a_1,a_1)\le C_1 N^{-3/2}+C_1N^{-1}(\varepsilon^2\delta^{-r}+\delta^{2-r/r'})
	\end{equation}
\end{lemma}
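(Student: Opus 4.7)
The plan is to start from the Fourier inversion formula
\[
\Lambda(a_1, a_1, a_1) = \frac{1}{N}\sum_{\xi\in\mathbb{Z}_N} \mathcal{F}_{\mathbb{Z}_N}[a_1](\xi)^2\, \mathcal{F}_{\mathbb{Z}_N}[a_1](-2\xi),
\]
and substitute $\mathcal{F}_{\mathbb{Z}_N}[a_1]=\mathcal{F}_{\mathbb{Z}_N}[a]\cdot\mathcal{F}_{\mathbb{Z}_N}[\beta]^2$ to isolate the leading contribution $\Lambda(a,a,a)$ from an error term $E:=\Lambda(a_1,a_1,a_1)-\Lambda(a,a,a)$. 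The direct calculation above, which uses the absence of 3APs in $A$ (so only the $d=0$ diagonal contributes and is controlled by $\sum_n \rho_{b,m,N}(n)^3$), already gives $\Lambda(a,a,a)\le C N^{-3/2}$, so the whole problem reduces to estimating
\[
|E|=\frac{1}{N}\Bigl|\sum_{\xi}\mathcal{F}_{\mathbb{Z}_N}[a](\xi)^2\,\mathcal{F}_{\mathbb{Z}_N}[a](-2\xi)\,\bigl(F(\xi)-1\bigr)\Bigr|,\qquad F(\xi):=\mathcal{F}_{\mathbb{Z}_N}[\beta](\xi)^4\mathcal{F}_{\mathbb{Z}_N}[\beta](-2\xi)^2.
\]

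The key geometric observation I would exploit is the symmetry of the set $B$ under $x\mapsto -x$, which makes $\beta$ an even function and hence forces $\mathcal{F}_{\mathbb{Z}_N}[\beta]$ to be real-valued. For $\xi\in R$ and $x\in B$ one has $\|x\xi/N\|\le \varepsilon$, so the Taylor expansion $\cos(2\pi x\xi/N)=1+O(\|x\xi/N\|^2)$ upgrades the naive $O(\varepsilon)$ bound to the sharper estimate $\mathcal{F}_{\mathbb{Z}_N}[\beta](\xi)=1+O(\varepsilon^2)$; the same bound applies to $\mathcal{F}_{\mathbb{Z}_N}[\beta](-2\xi)$ because $\|2x\xi/N\|\le 2\varepsilon$ whenever $x\in B$ and $\xi\in R$. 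Consequently $|F(\xi)-1|\lesssim \varepsilon^2$ for every $\xi\in R$, while the universal bound $|F(\xi)-1|\le 2$ holds for all $\xi$. I would then split the sum defining $E$ according to whether $\xi\in R$: on $R$, combining $|F-1|\lesssim\varepsilon^2$, the crude estimate $|\mathcal{F}_{\mathbb{Z}_N}[a](\eta)|\le \|a\|_1\lesssim 1$, and the Chebyshev-type bound $|R|\le \delta^{-r}\|\mathcal{F}_{\mathbb{Z}_N}[a]\|_r^r\lesssim \delta^{-r}$ (which comes from the discrete majorant property of Lemma~\ref{DscMjP}) produces the contribution $\lesssim \varepsilon^2\delta^{-r}/N$. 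Off $R$, I use $|\mathcal{F}_{\mathbb{Z}_N}[a](\xi)|<\delta$ and, noting that $r<3$ throughout the admissible range, bound $|\mathcal{F}_{\mathbb{Z}_N}[a](\xi)|^2\le \delta^{3-r}|\mathcal{F}_{\mathbb{Z}_N}[a](\xi)|^{r-1}$, then apply H\"older with conjugate exponents $r/(r-1)$ and $r$ to the residual sum $\sum_\xi|\mathcal{F}_{\mathbb{Z}_N}[a](\xi)|^{r-1}|\mathcal{F}_{\mathbb{Z}_N}[a](-2\xi)|$. Lemma~\ref{DscMjP} together with the bijectivity of $\xi\mapsto -2\xi$ on $\mathbb{Z}_N$ (using that $N$ is an odd prime) controls both factors by $\|\mathcal{F}_{\mathbb{Z}_N}[a]\|_r^r\lesssim 1$, yielding the off-spectrum contribution $\lesssim \delta^{3-r}/N=\delta^{2-r/r'}/N$; assembling the pieces produces the inequality claimed in the lemma.

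The main obstacle is securing the $\varepsilon^2$ factor (rather than merely $\varepsilon$) on $R$: the straightforward estimate $|e^{-2\pi i x\xi/N}-1|\le 2\pi\varepsilon$ alone only yields $|\mathcal{F}_{\mathbb{Z}_N}[\beta](\xi)-1|=O(\varepsilon)$, which would give a weaker bound $\varepsilon\delta^{-r}/N$ insufficient for the subsequent Varnavides-type comparison with the trivial lower bound on $\Lambda(a_1,a_1,a_1)$. The symmetry of $B$ is what cancels the leading purely imaginary contribution in $e^{-i\theta}-1$ and is the decisive ingredient that delivers the required $O(\varepsilon^2)$ estimate.
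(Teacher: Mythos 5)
Your proof follows essentially the same route as the paper's: decompose $\Lambda(a_1,a_1,a_1)$ into the pre-established bound $\Lambda(a,a,a)\lesssim N^{-3/2}$ plus the error expressed via Fourier inversion, split the error sum over $\xi\in R$ versus $\xi\notin R$, control the on-spectrum part by $|R|\lesssim\delta^{-r}$ and the $O(\varepsilon^2)$ bound on $\mathcal{F}_{\mathbb{Z}_N}[\beta]^4(\xi)\mathcal{F}_{\mathbb{Z}_N}[\beta]^2(-2\xi)-1$, and control the off-spectrum part by H\"older combined with the discrete majorant property. The one genuine difference is that you re-derive the $\varepsilon^2$ bound from the evenness of the Bohr set $B$ (so $\mathcal{F}_{\mathbb{Z}_N}[\beta]$ is real-valued, and the cosine Taylor expansion gives $1+O(\varepsilon^2)$), whereas the paper simply cites Green's Lemma~6.7; your re-derivation is correct and is in fact the standard argument underlying that lemma, so this is a pleasant self-contained alternative rather than a new idea.

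One gap: your claim that ``$r<3$ throughout the admissible range'' is false. The lemma is stated for all $r>2+\frac{62-62/c_2}{16/c_1+17/c_2-32}$, and since the additive constant is less than $1$ under the standing hypotheses, the range includes every $r\ge 3$. For $r\ge 3$ the bound $|\mathcal{F}_{\mathbb{Z}_N}[a](\xi)|^2\le\delta^{3-r}|\mathcal{F}_{\mathbb{Z}_N}[a](\xi)|^{r-1}$ reverses direction on $\{\xi\notin R\}$ (since $3-r\le 0$ and $|\mathcal{F}_{\mathbb{Z}_N}[a](\xi)|<\delta$), so your off-spectrum argument only works for $r<3$. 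The paper patches this by first proving the estimate for some auxiliary exponent $s$ in the open interval $\bigl(2+\frac{62-62/c_2}{16/c_1+17/c_2-32},\,3\bigr)$ with $s\le r$, and then observing that $\delta^{2-s/s'}\le\delta^{2-r/r'}$ because $\delta\le 1$ and $2-s/s'=3-s\ge 3-r=2-r/r'$. You should add this reduction to cover the full stated range of $r$.
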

\begin{proof}
	According to the previous estimate, we have that there exists a constant $C_0=C_0(h_1,h_2,\psi)$ such that $\Lambda(a,a,a)\le C_0N^{-3/2}$. Therefore, we have that 
	\[
	\Lambda(a_1,a_1,a_1)\le\Lambda(a_1,a_1,a_1)-\Lambda(a,a,a)+C_0N^{-3/2}=
	\]
	
	\[
	\frac{1}{N}\sum_{\xi\in\mathbb{Z}_N}\mathcal{F}_{\mathbb{Z}_N}[a_1](\xi)\mathcal{F}_{\mathbb{Z}_N}[a_1](-2\xi)\mathcal{F}_{\mathbb{Z}_N}[a_1](\xi)-\frac{1}{N}\sum_{\xi\in\mathbb{Z}_N}\mathcal{F}_{\mathbb{Z}_N}[a](\xi)\mathcal{F}_{\mathbb{Z}_N}[a](-2\xi)\mathcal{F}_{\mathbb{Z}_N}[a](\xi)+C_0N^{-3/2}=
	\]
	
	\[
	\frac{1}{N}\sum_{\xi\in\mathbb{Z}_N}\mathcal{F}_{\mathbb{Z}_N}[a](\xi)^2\mathcal{F}_{\mathbb{Z}_N}[\beta](\xi)^4\mathcal{F}_{\mathbb{Z}_N}[a](-2\xi)\mathcal{F}_{\mathbb{Z}_N}[\beta](-2\xi)^2-\frac{1}{N}\sum_{\xi\in\mathbb{Z}_N}\mathcal{F}_{\mathbb{Z}_N}[a](\xi)^2\mathcal{F}_{\mathbb{Z}_N}[a](-2\xi)+C_0N^{-3/2}=
	\]
	
	\[
	\frac{1}{N}\sum_{\xi\in\mathbb{Z}_N}\mathcal{F}_{\mathbb{Z}_N}[a](\xi)^2\mathcal{F}_{\mathbb{Z}_N}[a](-2\xi)\left(\mathcal{F}_{\mathbb{Z}_N}[\beta](\xi)^4\mathcal{F}_{\mathbb{Z}_N}[\beta](-2\xi)^2-1\right)+C_0N^{-3/2}
	\]
	For every $\xi\in R$, we will have that $\left|\mathcal{F}_{\mathbb{Z}_N}[\beta](\xi)^4\mathcal{F}_{\mathbb{Z}_N}[\beta](-2\xi)^2-1\right|\le 2^{12}\varepsilon^2$; the proof is straightforward and can be found in \cite{BenGreen}, see Lemma~6.7, page 19. On the one hand, Lemma~$\ref{DscMjP}$ suggests that there exists a constant $C=C(r,h_1,h_2,\psi)$ such that 
\[
	||\mathcal{F}_{\mathbb{Z}_N}[a]||_{\ell^{\infty}(\mathbb{Z}_N)}\le ||\mathcal{F}_{\mathbb{Z}_N}[a]||_{\ell^r(\mathbb{Z}_N)}\le C
	\]
	On the other hand, we have
	
	\begin{equation}\label{lastarg}
		\delta^r|R|\le \sum_{\xi\in R}|\mathcal{F}_{\mathbb{Z}_N}[a](\xi)|^r\le \sum_{\xi\in \mathbb{Z}_N}|\mathcal{F}_{\mathbb{Z}_N}[a](\xi)|^r\le C^r
	\end{equation}
	Thus
	
	\[
	\left|\sum_{\xi\in R}\mathcal{F}_{\mathbb{Z}_N}[a](\xi)^2\mathcal{F}_{\mathbb{Z}_N}[a](-2\xi)\left(\mathcal{F}_{\mathbb{Z}_N}[\beta](\xi)^4\mathcal{F}_{\mathbb{Z}_N}[\beta](-2\xi)^2-1\right)\right|\le
	\]
	
	\[
	2^{12}\varepsilon^2\sum_{\xi\in R}|\mathcal{F}_{\mathbb{Z}_N}[a](\xi)|^2|\mathcal{F}_{\mathbb{Z}_N}[a](-2\xi)|\le 2^{12}\varepsilon^2  C^3 |R|\le C^{3+r} 2^{12}\varepsilon^2\delta^{-r}  
	\]
Now we bound the sum along $\xi\notin R$. Firstly, we note that $\sup_{\xi\in\mathbb{Z}_N}\left|\mathcal{F}_{\mathbb{Z}_N}[\beta](\xi)^4\mathcal{F}_{\mathbb{Z}_N}[\beta](-2\xi)^2-1\right|\le 2$ since $|\mathcal{F}_{\mathbb{Z}_N}[\beta](\xi)|\le1$. For now, let's assume that $r\in (2+\frac{62-62/c_2}{16/c_1+17/c_2-32},3)$, which is possible since $c_1\in [1,95/94)$, $c_2\in[1,95/94)$ and thus $0\le\frac{62-62/c_2}{16/c_1+17/c_2-32}< 1$. Let $r'$ be such that $1/r+1/r'=1$ and note that $2-r/r'>0$. Thus
	
	\[
	\left|\sum_{\xi\notin R}\mathcal{F}_{\mathbb{Z}_N}[a](\xi)^2\mathcal{F}_{\mathbb{Z}_N}[a](-2\xi)\left(\mathcal{F}_{\mathbb{Z}_N}[\beta](\xi)^4\mathcal{F}_{\mathbb{Z}_N}[\beta](-2\xi)^2-1\right)\right|\le
	\]
	
	\[
	2\sum_{\xi\notin R}|\mathcal{F}_{\mathbb{Z}_N}[a](\xi)^2\mathcal{F}_{\mathbb{Z}_N}[a](-2\xi)|\le 2\sup_{\xi\notin R}\big|\mathcal{F}_{\mathbb{Z}_N}[a](\xi)\big|^{2-r/r'}\sum_{\xi\notin R}|\mathcal{F}_{\mathbb{Z}_N}[a](\xi)^{r/r'}\mathcal{F}_{\mathbb{Z}_N}[a](-2\xi)|\le
	\]
	
	\[
	2\delta^{2-r/r'}\big( \sum_{\xi\in\mathbb{Z}_N}|\mathcal{F}_{\mathbb{Z}_N}[a](\xi)|^{r}\big)^{1/r'}\big( \sum_{\xi\in\mathbb{Z}_N}|\mathcal{F}_{\mathbb{Z}_N}[a](-2\xi)|^{r}\big)^{1/r}=2\delta^{2-r/r'}\sum_{\xi\in\mathbb{Z}_N}|\mathcal{F}_{\mathbb{Z}_N}[a](\xi)|^{r}\le 2C^r\delta^{2-r/r'} 
	\]
	For $C_1=\max\{\,C_0, C^{3+r} 2^{12},2C^r\,\}$ we get
	
	\[
	\Lambda(a_1,a_1,a_1)\le C_1N^{-3/2}+C_1N^{-1}(\varepsilon^2\delta^{-r}+\delta^{2-r/r'})
	\]
	For $r\ge3$, let $s\in (2+\frac{12-12/c_2}{1/c_1+3/c_2-3},3)$, the previous argument yields
	
	\[
	\Lambda(a_1,a_1,a_1)\le C_1N^{-3/2}+C_1N^{-1}(\epsilon^2\delta^{-r}+\delta^{2-s/s'})
	\]  and since $\delta^{2-s/s'}\le\delta^{2-r/r'}$ we get the desired result.
\end{proof}
The following Lemma provides a lower bound for $\Lambda(a_1,a_1,a_1)$ and similarly to the work of Green \cite{BenGreen}, we adapt Varnavides' argument  \cite{VARN} in order to attain it.
\begin{lemma}\label{LBound}
	There exist positive constants $C_2=C_2(h_1,h_2,\psi)$, $C_3=C_3(h_1,h_2,\psi)$ such that 
	\begin{equation}
		\Lambda(a_1,a_1,a_1)\ge C_2N^{-1}e^{-C_3\alpha^{-1}\log(1/\alpha)}
	\end{equation}
\end{lemma}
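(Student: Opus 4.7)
The plan is to exploit the $\ell^\infty$ bound $\|a_1\|_\infty \lesssim 1/N$ provided by Lemma~\ref{L4HL3} together with the lower bound $a_1(\mathbb{Z}_N) \ge \alpha$, which together force $a_1$ to behave like a normalized indicator function of a set of density $\sim\alpha$ in $\mathbb{Z}_N$. Applying a quantitative Roth--Varnavides argument to this set will produce many three-term arithmetic progressions, each contributing a non-trivial amount to $\Lambda(a_1,a_1,a_1)$.

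First I would extract a level set $A_1 = \{x \in \mathbb{Z}_N : a_1(x) \ge c_0\alpha/N\}$ for a small constant $c_0$ depending on the constant in Lemma~\ref{L4HL3}. Splitting $a_1(\mathbb{Z}_N) = a_1(A_1) + a_1(A_1^c)$ and bounding the second summand by $c_0\alpha$ (since $|A_1^c| \le N$) while the first is bounded by $|A_1|\cdot(C/N)$ via the $\ell^\infty$ estimate, a Markov-type inequality gives $|A_1| \gtrsim \alpha N$, i.e.~$A_1$ has density $\gtrsim \alpha$ in $\mathbb{Z}_N$. Next, invoke a quantitative Roth theorem: there is a threshold $L = L(\alpha) \le \exp(C\alpha^{-1}\log(1/\alpha))$ such that any subset of density $\ge c_0\alpha/2$ in a progression of length $L$ contains a non-trivial three-term arithmetic progression. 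Via the usual Varnavides counting on $\mathbb{Z}_N$, I would consider length-$L$ progressions $P_{x,d}$ for $(x,d) \in \mathbb{Z}_N \times (\mathbb{Z}_N\setminus\{0\})$: the expected density of $A_1 \cap P_{x,d}$ in $P_{x,d}$ equals $|A_1|/N \gtrsim \alpha$, so another Markov argument forces a fraction $\gtrsim \alpha$ of pairs $(x,d)$ to yield progressions in which $A_1$ has density $\ge c_0\alpha/2$, each of which then contains a $3$AP by Roth. Since each $3$AP $(y, y+s, y+2s)$ in $A_1 \subset \mathbb{Z}_N$ sits inside at most $O(L^2)$ length-$L$ progressions, the total number of $3$APs in $A_1$ is $\gtrsim \alpha N^2/L^2 \ge N^2 \exp(-C'\alpha^{-1}\log(1/\alpha))$.

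Finally I would convert this to a lower bound for $\Lambda(a_1,a_1,a_1)$. Every three-term arithmetic progression $(x, x+d, x+2d)$ contained in $A_1$ contributes at least $(c_0\alpha/N)^3$ to the trilinear form by definition of $A_1$. Summing over all such $3$APs yields
\[
\Lambda(a_1, a_1, a_1) \gtrsim N^2\exp\bigl(-C'\alpha^{-1}\log(1/\alpha)\bigr)\cdot\Bigl(\frac{c_0\alpha}{N}\Bigr)^3 \gtrsim \alpha^3 N^{-1} \exp\bigl(-C'\alpha^{-1}\log(1/\alpha)\bigr),
\]
and absorbing the polynomial factor $\alpha^3$ into the exponential (since $\alpha^3 \ge \exp(-3\log(1/\alpha))$) gives the stated form $C_2 N^{-1} \exp(-C_3\alpha^{-1}\log(1/\alpha))$ with appropriately adjusted constants.

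The main obstacle is the bookkeeping in the Varnavides step: we need to carefully handle the passage from length-$L$ progressions in $\mathbb{Z}_N$ to $3$APs (ensuring that length-$L$ progressions are genuinely injective, which requires $N$ prime and sufficiently large compared with $L$---both of which hold since $N$ is prime and $L = \exp(C\alpha^{-1}\log(1/\alpha))$ is a fixed function of $\alpha$ while $N\to\infty$), as well as to track the overcounting factor $L^2$ correctly so that the final exponent in $\exp(-C_3\alpha^{-1}\log(1/\alpha))$ matches the one furnished by the quantitative Roth input.
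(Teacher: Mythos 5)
Your approach is essentially the paper's: pass to the level set $A'=\{x:a_1(x)\ge c\alpha/N\}$, deduce $|A'|\gtrsim \alpha N$ from $\|a_1\|_{\ell^\infty}\lesssim 1/N$ via a Markov-type split of $a_1(\mathbb{Z}_N)$, run a Varnavides counting over length-$L$ progressions $P_{a,d}$ in $\mathbb{Z}_N$ to lower-bound the number $Z$ of 3APs in $A'$ by $\gtrsim \alpha N^2/L^2$, and then conclude via $\Lambda(a_1,a_1,a_1)\ge Z\,(c\alpha/N)^3$. The one point of divergence is the Roth input. You posit a threshold $L(\alpha)\le \exp(C\alpha^{-1}\log(1/\alpha))$, which requires a Roth bound of strength at least $r_3(N)\lesssim \log\log N/\log N$ (e.g.\ Bloom--Sisask); the paper instead cites Sanders' theorem, whose bound $r_3(N)\lesssim (\log\log N)^5/\log N$ forces $M=\lceil e^{D\alpha^{-1}\log^5(1/\alpha)}\rceil$, and the paper's proof accordingly produces $C_2 N^{-1}e^{-C_3\alpha^{-1}\log^5(1/\alpha)}$ --- which is also the form used in the concluding proof of Theorem~\ref{T2}. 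So the $\log(1/\alpha)$ (rather than $\log^5(1/\alpha)$) in the statement of Lemma~\ref{LBound} appears to be a typographical slip, and your plan, while structurally identical, happens to target the typo'd exponent and would need the stronger Roth theorem to deliver it; with Sanders as input your argument reproduces the paper's $\log^5$ bound verbatim.
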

\begin{proof}
	We know from Sanders' result \cite{SA1} that there exists a positive constant $D$ such that if $M\ge e^{D\alpha^{-1}\log^5(1/\alpha)}$ then all subsets of $[M]$ with density at least $\frac{\alpha}{4C}$ contain a non--trivial three--term arithmetic progression, where $C=C(h_1,h_2,\psi)$ is the implied constant appearing in Lemma $\ref{L4HL3}$ and without loss of generality let us assume that $C\ge2$. Let $A'=\{\,x\in\mathbb{Z}_N:a_1(x)\ge \frac{\alpha}{NC}\,\}$ and note that from Lemma \ref{L4HL3}
	\[
	\alpha\le a_1(\mathbb{Z}_N)=\sum_{x\in A'}a_1(x)+\sum_{x\notin A'}a_1(x)\le \frac{|A'|C}{N} +\frac{(N-|A'|)\alpha}{NC}
	\] which implies that $|A'|\ge \frac{\alpha N}{2C}$. Let's define $Z=|\{\,(x,d)\in\mathbb{Z}_N^2: x,x+d,x+2d\in A'\,\}|$ to be the number of arithmetic progressions of length three in $A'$. Note that
	\[
	\Lambda(a_1,a_1,a_1)=\sum_{x,d\in\mathbb{Z}_N}a_1(x)a_1(x+d)a_1(x+2d)\ge Z\frac{\alpha^3}{N^3C^3}
	\] 
	Let us fix $M=\Big\lceil e^{D\alpha^{-1}\log^5(1/\alpha)} \Big\rceil$. For $N\ge M$, we find a lower bound for $Z$. Let $a,d\in\mathbb{Z}_N$ be such that $d\neq 0$ and let $P_{a,d}=\{\,a,a+d,a+2d,\dotsc,a+(M-1)d\}$. For a fixed $d$, Sanders' result gives that for every $a\in\mathbb{Z}_N$ such that $|A'\cap P_{a,d}|/M\ge \alpha/(4C) $ we must necessarily have that $A'\cap P_{a,d}$  contains a non-trivial three-term arithmetic progression. We have
	\[
	\sum_{a\in\mathbb{Z}_N}|A'\cap P_{a,d}|=M|A'|\ge \frac{\alpha  M N}{2C}
	\] 
	where we have used the fact that in the sum we count every element of $A'$ exactly $M$ times. Therefore
	\[
	\frac{\alpha  M N}{2C}\le \sum_{a\in\mathbb{Z}_N:|A'\cap P_{a,d}|\ge \alpha M/(4C)}  |A'\cap P_{a,d}|+\sum_{a\in\mathbb{Z}_N:|A'\cap P_{a,d}|< \alpha M/(4C)}  |A'\cap P_{a,d}|
	\]
	and we have
	\[
	\sum_{a\in\mathbb{Z}_N:|A'\cap P_{a,d}|< \alpha M/(4C)}  |A'\cap P_{a,d}|<\frac{ \alpha MN}{4C}
	\]
	which in turn implies that
	\[
	\frac{ \alpha MN}{4C}\le \sum_{a\in\mathbb{Z}_N:|A'\cap P_{a,d}|\ge \alpha M/(4C)}  |A'\cap P_{a,d}|\le |\{\,a\in\mathbb{Z}_N:|A'\cap P_{a,d}|\ge \alpha M/(4C)\,\}|M  
	\]
	Thus $\frac{ \alpha N}{4C}\le |\{\,a\in\mathbb{Z}_N:|A'\cap P_{a,d}|\ge \alpha M/(4C)\,\}|$. There are at least $ \frac{ \alpha N}{4C}$ values of $a\in\mathbb{Z}_N$ such that $|A'\cap P_{a,d}|\ge \alpha M/(4C)$. Thus, for each $d\in\mathbb{Z}_N\setminus\{0\}$, there are at least  $ \frac{ \alpha N}{4C}$ values of $a\in\mathbb{Z}_N$ such that $A'\cap P_{a,d}$ contains a non--trivial three--term arithmetic progression in $\mathbb{Z}_N$. Each such progression  can be in at most $M^2$ sets of the form $P_{a,d}$. Thus we have that $Z\ge \frac{ \alpha N(N-1)}{4M^2C}$. Finally, we have that
	\[
	\Lambda(a_1,a_1,a_1)=\sum_{x,d\in\mathbb{Z}_N}a_1(x)a_1(x+d)a_1(x+2d)\ge Z\frac{\alpha^3}{N^3C^3}\ge \frac{\alpha^4}{4C^4M^2}\cdot\frac{N-1}{N}\cdot N^{-1}\ge
	\]
	
	\[
	\frac{\alpha^4}{8C^4M^2}\cdot N^{-1}\ge \frac{\alpha^4}{32C^4e^{2D\alpha^{-1}\log^5(1/\alpha)}}\cdot N^{-1} \ge C_2N^{-1}e^{-C_3\alpha^{-1}\log^5(1/\alpha)}
	\]
	for suitable positive constants $C_2,C_3$ as desired. For $N<M$, we trivially have 
	
	\[
	\Lambda(a_1,a_1,a_1)=\sum_{x,d\in\mathbb{Z}_N}a_1(x)a_1(x+d)a_1(x+2d)\ge Z\frac{\alpha^3}{N^3C^3}\ge \frac{\alpha^3}{NM^2C^3}
	\] 
	since $Z$ must contain the trivial arithmetic progressions and $A'\neq 
	\emptyset$ since $|A'|\ge \frac{\alpha N}{2C}$. We conclude with the same calculation as before.
\end{proof}
\begin{proof}[Concluding the Proof of Theorem 1.5]
	Remember that we have assumed for the sake of a contradiction that there exists a 3AP-free $A_0\subseteq \mathbb{P}_B$ with positive upper relative density. All the previous lemmas are applicable. Fix $r\in (2+\frac{62-62/c_2}{16/c_1+17/c_2-32},3)$, then from Lemmas~$\ref{UBound}$, $\ref{LBound}$, we have that there exist positive constants $C_1,C_2,C_3$ such that 
	
	\[
	C_2N^{-1}e^{-C_3\alpha^{-1}\log^5(\alpha^{-1})}\le\Lambda(a_1,a_1,a_1)\le C_1 N^{-3/2}+C_1N^{-1}(\varepsilon^2\delta^{-r}+\delta^{2-r/r'})
	\]
	We will choose $\varepsilon,\delta\in(0,1)$ such that the above inequality fails and such that $\varepsilon^k\ge \log\log(W)/W$, so that Lemma $\ref{L4HL3}$ will be applicable. More specifically, we will choose two  positive constants $C_4,C_5$ such that \[\delta=e^{-C_4\alpha^{-1}\log^5(1/\alpha)},\,\varepsilon=e^{-C_5\alpha^{-1}\log^5(1/\alpha)}\]
We have
	
	\[
	C_2N^{-1}e^{-C_3\alpha^{-1}\log^5(\alpha^{-1})}\le C_1 N^{-3/2}+C_1N^{-1}\Big(e^{(-2C_5+rC_4)\alpha^{-1}\log^5(1/\alpha)}+e^{-C_4(2-r/r')\alpha^{-1}\log^5(1/\alpha)}\Big)
	\]
	Thus
	\[
	e^{-C_3\alpha^{-1}\log^5(\alpha^{-1})}\Big(C_2-C_1e^{-(2C_5-rC_4-C_3)\alpha^{-1}\log^5(\alpha^{-1})}-C_1e^{-(C_4(2-r/r')-C_3)\alpha^{-1}\log^5(\alpha^{-1})} \Big)\le C_1N^{-1/2}
	\]
	We can choose a sufficiently large $C_4>0$ such that $C_1e^{-(C_4(2-r/r')-C_3)\alpha^{-1}\log^5(\alpha^{-1})}\le C_2/4$, and then choose a sufficiently large $C_5>0$ such that $C_1e^{-(2C_5-rC_4-C_3)\alpha^{-1}\log^5(\alpha^{-1})}\le C_2/4$. Then
	
	\[
	C_2/2e^{-C_3\alpha^{-1}\log^5(1/\alpha)}\le C_1N^{-1/2}
	\]
	We will have a contradiction provided that the assumptions of Lemma~$\ref{L4HL3}$ are in place. We note that $\varepsilon^k=\varepsilon^{|R|}\ge \varepsilon^{C\delta^{-r}}$, where $C$ is a positive constant guaranteed by the estimate in $\ref{lastarg}$. It suffices to show that $\varepsilon^{C\delta^{-r}}\ge \log\log W/W$ which is equivalent to 
	\[
	Ce^{rC_4\alpha^{-1}\log^5(1/\alpha)}C_5\alpha^{-1}\log^5(1/\alpha)\le \log\big(W/\log\log W\big)
	\]
	To show that this is the case for sufficiently large $N$, it suffices to show that 
	\[
	Ce^{rC_4\alpha^{-1}\log(1/\alpha)^5}C_5\alpha^{-1}\log^5(1/\alpha)\le \log\bigg(\frac{\frac{1}{8}\log\log N}{\log\log\big(\frac{1}{8}\log\log(N)\big)}\bigg) 
	\]
	which is true for $N$ large enough to make $\alpha\gtrsim \frac{(\log\log\log\log\log N)^6}{\log\log\log\log N}$. The proof of Theorem $\ref{T2}$ is complete.
\end{proof}

 \section{Proof of Main Lemma}\label{expest}
 This section is devoted to the proof of Lemma~\ref{PR2}, which is the main tool that allows us to use Bourgain–Green's result, see Theorem~$\ref{B-G}$, to obtain our Restriction Theorem, see Theorem~$\ref{RestOur}$. The methods used are analogous to the ones in Sections 6 and 7 of \cite{MMR} and the main difficulty here lies in the technical complications that the sophisticated nature of the sets $B$ bring. We fix $c_1\in[1,16/15)$ and $c_2\in[1,17/16)$, $h_1$, $h_2$, $\psi$ and $B$ as in the introduction and all the implied constants may depend on them. Let us mention that if $c_1=1$ then we fix $\sigma_1$ as in Lemma~2.14 in \cite{MMR}, otherwise let $\sigma_1$ be the constant function $1$. We use the basic properties of the functions, described in Lemma~2.6  and Lemma~2.14 in \cite{MMR} without further mention.  Before attempting to prove the main Lemma, we collect some useful intermediate results.
 
 \begin{lemma}\label{Lpsi}
 	Let $m\in\mathbb{Z}\setminus\{0\}$, $l\in\mathbb{N}$, $j\ge 0$, $X\ge 1$, $\alpha\in\mathbb{R}$ and $s\in\{0,1\}$. Then 
 	\[
 	\bigg|\sum_{k=1}^Xe^{2\pi i (\alpha j k l+m(\varphi_1(kl)-s\psi(kl)))}\bigg|\lesssim |m|^{1/2}\log(lX)lX(\sigma(lX)\varphi_1(lX))^{-1/2}
 	\]
 	and more precisely, for all positive real numbers $Y,Y'$ we have
 	\[
 	\bigg|\sum_{Y<k\le Y'\le 2Y}e^{2\pi i (\alpha j k l+m(\varphi_1(kl)-s\psi(kl)))}\bigg|\lesssim
 	|m|^{1/2}lY(\sigma_1(lY)\varphi_1(lY))^{-1/2}\]
 \end{lemma}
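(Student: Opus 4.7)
The natural tool here is Van der Corput's second derivative test, as foreshadowed in the introduction. Write the phase as
\[
F(k) = \alpha j k l + m\bigl(\varphi_1(kl) - s\psi(kl)\bigr).
\]
The linear-in-$k$ term $\alpha j k l$ contributes nothing to $F''$, so
\[
F''(k) = m l^{2}\bigl(\varphi_1''(kl) - s\,\psi''(kl)\bigr).
\]
The plan is to prove the dyadic estimate first (the sum over $Y < k \le Y' \le 2Y$) and then deduce the global bound by a dyadic decomposition of $[1,X]$, picking up the $\log(lX)$ factor from the number of pieces.

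On a dyadic block $(Y,Y']$, I would use the regularity of $\varphi_1$ and $\psi$ (Lemma~2.14 in \cite{MMR}, applied to $h_1 \in \mathcal{R}_{c_1}$ and to $\varphi_2$ via $\psi \sim \varphi_2'$) to obtain
\[
|\varphi_1''(kl)| \simeq \frac{\varphi_1(lY)}{(lY)^{2}\,\sigma_1(lY)}, \qquad |\psi''(kl)| \lesssim |\varphi_2'''(kl)| \simeq \frac{\varphi_2(lY)}{(lY)^{3}}.
\]
The key simple observation is that the $\psi''$ contribution is dominated by the $\varphi_1''$ contribution, because the ratio $\psi''/\varphi_1''$ scales like $(lY)^{\gamma_2 - \gamma_1 - 1}$ (up to slowly varying factors), which is bounded since $\gamma_2 - \gamma_1 < 1$. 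Thus $|F''(k)| \simeq |m|\,\varphi_1(lY)/\bigl(Y^{2}\sigma_1(lY)\bigr)$ is of a stable size $\lambda$ on the block. Van der Corput's second derivative test then gives
\[
\Bigl|\sum_{Y<k\le Y'} e^{2\pi i F(k)}\Bigr| \lesssim Y\,\lambda^{1/2} + \lambda^{-1/2}.
\]
Plugging in $\lambda \simeq |m|\varphi_1(lY)\bigl(Y^{2}\sigma_1(lY)\bigr)^{-1}$ gives a first term $|m|^{1/2}\bigl(\varphi_1(lY)/\sigma_1(lY)\bigr)^{1/2}$ and a second term $|m|^{-1/2} Y\bigl(\sigma_1(lY)/\varphi_1(lY)\bigr)^{1/2}$, and one checks that both are $\lesssim |m|^{1/2}\,lY\,\bigl(\sigma_1(lY)\varphi_1(lY)\bigr)^{-1/2}$ using $\varphi_1(lY) \le lY$ and $|m|\ge 1$.

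For the global bound on $[1,X]$, I would split dyadically into intervals $(X/2^{j+1}, X/2^j]$ for $j = 0, 1, \ldots, O(\log(lX))$, apply the dyadic bound on each and sum, using that $lY\,(\sigma_1(lY)\varphi_1(lY))^{-1/2}$ is essentially monotone in $Y$ (its polynomial part is $Y^{1-\gamma_1/2}$) so the geometric sum is controlled by the top scale $Y = X$. This produces the extra $\log(lX)$ factor in the first inequality.

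The main obstacle is the case $c_1 = 1$, where $\varphi_1$ itself is slowly varying and $x^{2}\varphi_1''(x)$ is no longer $\simeq \varphi_1(x)$ but rather $\simeq \varphi_1(x)/\sigma_1(x)$; this is precisely what the auxiliary function $\sigma_1$ is designed to track, and one must invoke the explicit formulas from Lemma~2.14 in \cite{MMR} to make sure that the size of $F''$ is captured correctly and that Van der Corput's bound indeed yields the stated form. A secondary, minor point is verifying cleanly that the $s\psi''$ term is negligible even when $\gamma_2$ is very close to $1$ and $\gamma_1$ equals $1$, but this is handled by the crude fact that $\psi''$ has one more $x^{-1}$ decay than $\varphi_1''$ up to slowly varying factors.
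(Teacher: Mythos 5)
Your proposal matches the paper's argument essentially step for step: Van der Corput's second derivative test on dyadic blocks, domination of the $s\psi''$ contribution by the $\varphi_1''$ contribution via a size comparison, then a dyadic sum picking up $\log(lX)$. One small caution on the detail you flag yourself: Lemma~2.14 in \cite{MMR} (as used by the paper) gives $\varphi_1''(x)\simeq \varphi_1(x)\sigma_1(x)/x^2$ with $\sigma_1\lesssim 1$ slowly varying, i.e.\ $\sigma_1$ sits in the \emph{numerator}, not the denominator as you wrote; with your orientation the check $\lambda^{-1/2}\lesssim |m|^{1/2}lY(\sigma_1(lY)\varphi_1(lY))^{-1/2}$ reduces to $\sigma_1(lY)\lesssim |m|l$, which does not follow from $\varphi_1\le \text{id}$ and $|m|\ge 1$ alone but does hold once $\sigma_1\lesssim 1$ is in hand, so the conclusion is unaffected though the intermediate formula needs the sign of $\sigma_1$'s placement corrected.
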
 
 \begin{proof}
 The proof of the result for $s=0$ is given in \cite{MMR}, see Lemma~6.7, page 21. Let $Y\in[1,X]$ and $Y'\in[Y+1,2Y]$ and $F(t)=\alpha jlt+m(\varphi_1(lt)-\psi(lt))$. Then $F''(t)=ml^2(\varphi_1''(lt)-\psi''(lt))$.
 If $c_1>1$, then we have that for all $t\in[Y,2Y]$
 	\[
 	|ml^2\varphi_1''(lt)|= \bigg|ml^2 \frac{\varphi_1(lt)(\gamma_1+\theta^{(1)}_1(lt))(\gamma_1-1+\theta^{(1)}_2(lt))}{(lt)^2}\bigg|\simeq ml^2\frac{\varphi_1(lY)}{(lY)^2}
 	\]
where $\theta^{(j)}_i$ is the function $\theta_i$ appearing in Lemma~2.14, \cite{MMR} for $\varphi_j$. Also, 
 	\[
 	|ml^2\psi''(lt)|\lesssim
 	|ml^2\varphi_2'''(lt)|= \bigg|ml^2 \frac{\varphi_2(lt)(\gamma_2+\theta^{(2)}_1(lt))(\gamma_2-1+\theta^{(2)}_2(lt))(\gamma_2-2+\theta^{(2)}_3(lt))}{(lt)^3}\bigg|\lesssim ml^2\frac{\varphi_2(lY)}{(lY)^3}
 	\]
 	and thus
 	\[
 	|F''(t)|\lesssim ml^2\frac{\varphi_1(lY)}{(lY)^2}+ml^2\frac{\varphi_2(lY)}{(lY)^3}\lesssim  ml^2\frac{\varphi_1(lY)}{(lY)^2},\text{ since }\frac{\varphi_2(lY)}{\varphi_1(lY)lY}\lesssim 1
 	\]
Also,
 	\[
 	|F''(t)|\gtrsim ml^2\frac{\varphi_1(lY)}{(lY)^2}-ml^2\frac{\varphi_2(lY)}{(lY)^3} =ml^2\frac{\varphi_1(lY)}{(lY)^2}\bigg(1-\frac{\varphi_2(lY)}{\varphi_1(lY)lY} \bigg)\gtrsim ml^2\frac{\varphi_1(lY)}{(lY)^2} 
 	\text{, since }\lim_{Y\to\infty}\frac{\varphi_2(lY)}{\varphi_1(lY)lY}=0
\]
Thus $|F''(t)|\simeq ml^2\frac{\varphi_1(lY)}{(lY)^2}$. \\If $c_1=1$, then we have that for all $t\in[Y,2Y]$
 	\[
 	|ml^2\varphi_1''(lt)|= \bigg|ml^2 \frac{\varphi_1(lt)(\gamma_1+\theta^{(1)}_1(lt))\sigma_1(lt)\tau_1(lt)}{(lt)^2}\bigg|\simeq ml^2\frac{\varphi_1(lY)\sigma_1(lY)}{(lY)^2}
 	\]
 	and thus
 	\[
 	|F''(t)|\lesssim ml^2\frac{\varphi_1(lY)\sigma_1(lY)}{(lY)^2}+ml^2\frac{\varphi_2(lY)}{(lY)^3}= ml^2\frac{\varphi_1(lY)\sigma_1(lY)}{(lY)^2}\bigg(1+\frac{\varphi_2(lY)}{\varphi_1(lY)\sigma_1(lY)lY} \bigg)\lesssim  ml^2\frac{\varphi_1(lY)\sigma_1(lY)}{(lY)^2}
 	\]
 	since $\frac{\varphi_2(lY)}{\varphi_1(lY)\sigma_1(lY)lY}\lesssim 1$, because $\frac{(lY)^{\gamma_2-\gamma_1-1}\sigma_1(lY)^{-1}\ell_{\varphi_2}(lY)}{\ell_{\varphi_1}(lY)}\to 0$ as $Y\to\infty$. Finally,
 	\[
 	|F''(t)|\gtrsim ml^2\frac{\varphi_1(lY)\sigma_1(lY)}{(lY)^2}-ml^2\frac{\varphi_2(lY)}{(lY)^3}= ml^2\frac{\varphi_1(lY)\sigma_1(lY)}{(lY)^2}\bigg(1-\frac{\varphi_2(lY)}{\varphi_1(lY)\sigma_1(lY)lY} \bigg)\gtrsim  ml^2\frac{\varphi_1(lY)\sigma_1(lY)}{(lY)^2}
 	\]
Thus $|F''(t)|\simeq ml^2\frac{\varphi_1(lY)\sigma_1(lY)}{(lY)^2}$. In both cases, we may apply Van der Corput Lemma, see Corollary 8.13, page 208 in \cite{IWKO}, (assume $\sigma_1(x)=1$, whenever $c_1>1$).
 	
 	\[
 	\bigg|\sum_{Y<k\le Y'\le 2Y}e^{2\pi i (\alpha j k l+m(\varphi_1(kl)-\psi(kl)))}\bigg|\lesssim Y|m|^{1/2}\frac{l(\sigma_1(lY)\varphi_1(lY))^{1/2}}{lY}+|m|^{-1/2}\frac{l^{-1}(\sigma_1(lY)\varphi_1(lY))^{-1/2}}{(lY)^{-1}}\lesssim 
 	\]
 	\[
 	|m|^{1/2}lY\bigg(\frac{\sigma_1(lY)\varphi_1(lY)}{(lY)^2}\bigg)^{1/2}+|m|^{-1/2}Y(\sigma_1(lY)\varphi_1(lY))^{-1/2}\lesssim
 	\]
 	\[
 	|m|^{1/2}lY(\sigma_1(lY)\varphi_1(lY))^{-1/2}\bigg(\frac{\sigma_1(lY)\varphi_1(lY)}{(lY)^2}+1 \bigg)\lesssim |m|^{1/2}lY(\sigma_1(lY)\varphi_1(lY))^{-1/2}
 	\]
 since $\sigma_1(x)\lesssim 1$ and $\varphi_1(x)\lesssim x^2$. To conclude, let's estimate using the dyadic pieces
 	
 	\[
 	\bigg|\sum_{k=1}^Xe^{2\pi i (\alpha j k l+m(\varphi_1(kl)-\psi(kl)))}\bigg|\lesssim \log(X)\sup_{Y\in[1,X]}\bigg\{|m|^{1/2}lY(\sigma_1(lY)\varphi_1(lY))^{-1/2}\bigg\}
 	\]
 	\[
 	\lesssim |m|^{1/2}\log(lX)lX(\sigma(lX)\varphi_1(lX))^{-1/2}
 	\]
 	since $x(\sigma(x)\varphi_1(x))^{-1/2}=x\varphi_1(x)^{-1/2}\sigma_1^{-1/2}(x)$ is increasing.
 \end{proof}

 Finally, we will need the following Lemma. Let us denote by $\Lambda$ the von
Mangoldt’s function as usual
 \[\Lambda(n)=\left\{
\begin{array}{ll}
      \log(n), & \text{if }n=p^k\text{ for some }p\in\mathbb{P}\text{ and }k\in\mathbb{N}, \\
      0,&\text{ otherwise.} \\
\end{array} 
\right. 
\]
and let us define $\Lambda_{a,q}(n)=\Lambda(n)1_{P_{a,q}}(n)$, where $P_{a,q}=\{\,n\in\mathbb{N}:n\equiv a\Mod q\,\}$.
 \begin{lemma}\label{LLambda}
 	Let $P\in\mathbb{N}$, $\xi\in\mathbb{T}$ and $M=P^{1+\chi+\varepsilon-(99/100)\gamma_2}$ where $\chi>0$, $0<\varepsilon<\chi/100$ are such that such that $16(1-\gamma_1)+17(1-\gamma_2)+31\chi\le1$. If we let $a,q\in\mathbb{N}$ such that $0\le a<q$ and $(a,q)=1$, then for every $m\in \mathbb{Z}$ such that $0<|m|\le M$ and every $P_1\in\mathbb{N}$ we have
 	
 	\[
 	\bigg|\sum_{P<k\le P_1\le 2P} \Lambda_{a,q}(k)e^{2\pi i (k\xi-m\varphi_1(k))}\bigg|\lesssim
 	\] 
 	\[
 	|m|^{1/2}\log(P_1)P_1^{4/3}\big(\sigma_1(P_1)\varphi_1(P_1)\big)^{-1/2}+|m|^{1/6}\log^6(P_1)P_1^{13/12}(\sigma_1(P_1)\varphi_1(P_1))^{-1/6}
 	\]
 \end{lemma}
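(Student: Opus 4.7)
The strategy is to apply Vaughan's identity to the von Mangoldt function, reducing the estimation to bilinear sums of Type I and Type II, then to bound the Type I sums using Lemma~\ref{Lpsi} directly and the Type II sums via a Cauchy--Schwarz and van der Corput differencing argument. The congruence $k\equiv a\Mod{q}$ is removed upfront by additive orthogonality: writing
\[
1_{\{k\equiv a\Mod{q}\}} = \frac{1}{q}\sum_{h=0}^{q-1} e\!\left(\tfrac{h(k-a)}{q}\right),
\]
the sum becomes $q^{-1}\sum_h e(-ha/q)\sum_{k}\Lambda(k)e(2\pi i(k\alpha_h - m\varphi_1(k)))$ with $\alpha_h := \xi + h/q$. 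Crucially the bound in Lemma~\ref{Lpsi} is uniform in the linear phase $\alpha$, and the van der Corput estimate I will use for Type II is uniform in linear perturbations as well, so the average over $h$ costs nothing and it suffices to bound the inner exponential sum uniformly in $\alpha_h$.

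Next I apply Vaughan's identity with parameters $u=v=P^{1/3}$. Up to $O(\log P)$ factors from dyadic decompositions, the sum over primes decomposes into \emph{Type I} pieces $\sum_{d\le u}c_d\sum_{l}e(2\pi i(\alpha_h dl - m\varphi_1(dl)))$, where $|c_d|\lesssim \log P$ and the inner $l$-range has length $\simeq P/d$; and \emph{Type II} pieces $\sum_{d,l}a_d b_l e(2\pi i(\alpha_h dl - m\varphi_1(dl)))$, where $d$ and $l$ range over dyadic blocks inside $[P^{1/3},P^{2/3}]$ with $dl\simeq P$ and $|a_d|,|b_l|$ bounded by divisor-type functions. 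The contribution from $n\le P^{1/3}$ is trivially bounded by $P^{1/3}\log P$. For the Type~I piece, Lemma~\ref{Lpsi} applied to the inner $l$-sum (taking the Lemma's fixed parameter to be our $d$, with $s=0$) yields, for each fixed $d$, the bound $|m|^{1/2}d(P/d)(\sigma_1(P)\varphi_1(P))^{-1/2}\log P=|m|^{1/2}P(\sigma_1\varphi_1)^{-1/2}\log P$, independent of $d$. Summing over $d\le P^{1/3}$ produces the first term of the claimed bound.

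For the Type~II piece, I apply Cauchy--Schwarz in $d$ to square the inner sum in $l$, then expand the square and swap summations to reduce the estimate to a sum, over pairs $(l,l')$, of inner exponential sums in $d$ with phase $\alpha_h(l-l')d + m(\varphi_1(dl)-\varphi_1(dl'))$. The diagonal $l=l'$ gives a trivial contribution. On the off-diagonal, the regularity of $\varphi_1\in\mathcal{R}_{c_1}$ (via Lemma~2.14 of \cite{MMR}) implies that the second derivative of this phase in $d$ is comparable to $|m||l-l'|\varphi_1(P)/P^2$---the same kind of computation that underlies the proof of Lemma~\ref{Lpsi}---and Corollary~8.13 of \cite{IWKO} then gives square-root cancellation. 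Careful summation over the pairs, separation of diagonal and off-diagonal, extraction of the square root from the initial Cauchy--Schwarz, and inclusion of the logarithmic losses from dyadic decompositions and divisor-function bounds yield the second term $|m|^{1/6}P^{13/12}(\sigma_1\varphi_1)^{-1/6}\log^6 P$.

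The main obstacle is this Type~II estimate. First, one has to verify that the differenced second derivative $l^2\varphi_1''(dl)-(l')^2\varphi_1''(dl')$ has the expected order of magnitude uniformly in $l\ne l'$, using only the $\mathcal{L}$-regularity encoded in $\mathcal{R}_{c_1}$, and with special care in the boundary case $c_1=1$ where the auxiliary function $\sigma_1$ from Lemma~2.14 of \cite{MMR} must be inserted throughout. Second, one must balance the diagonal bound against the off-diagonal van der Corput contribution over the correct dyadic ranges so that the $P^{13/12}$ and $|m|^{1/6}$ exponents emerge, and in particular verify that the optimization is consistent with the Vaughan parameter $u=v=P^{1/3}$ so that the Type I and Type II bounds are of matching order. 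All remaining steps---uniformity in $h$, negligibility of the small-$n$ portion, and the $\log^6 P$ losses---are standard.
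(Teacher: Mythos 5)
Your approach is essentially the paper's: Vaughan's identity with $v=w=P^{1/3}$, Type~I sums via Lemma~\ref{Lpsi}, Type~II sums via Cauchy--Schwarz plus van der Corput differencing (which the paper packages as Lemma~6.12 of \cite{MMR}, cited here as Lemma~\ref{6.12'}), and the congruence removed upfront by additive characters. The one step you should not dismiss as ``standard'' is verifying that the hypothesis $|m|\min\{L,K\}\lesssim\varphi_1(LK)\sigma_1(LK)$ (and $\varphi_1(LK)\le\min\{L,K\}^4$) holds throughout the dyadic range $L,K\in[P^{1/3}/2,\,P^{2/3}]$: this is exactly where the restriction $|m|\le M=P^{1+\chi+\varepsilon-99\gamma_2/100}$ and the numerical constraint $16(1-\gamma_1)+17(1-\gamma_2)+31\chi\le1$ enter (via the inequality $3/2+\chi+\varepsilon-99\gamma_2/100<\gamma_1$), and without it the $|m|^{1/6}$ gain in the Type~II bound is not available.
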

 \begin{proof}
 	Recall that
 	\[
 	1_{P_{a,q}}(k)=\frac{1}{q}\sum_{s=0}^{q-1}e(s(k-a)/q)\text{, where }e(x)=e^{2 \pi i x}
 	\]
 	we may write
 	\[
 	\sum_{P<k\le P_1\le 2P} \Lambda_{a,q}(k)e^{2\pi i (k\xi-m\varphi_1(k))}=\sum_{P<k\le P_1\le 2P} \Lambda(k)\frac{1}{q}\sum_{s=0}^{q-1}e^{2\pi i(s(k-a)/q)}e^{2\pi i (k\xi-m\varphi_1(k))}=
 	\]
 	\[
 	\frac{1}{q}\sum_{s=0}^{q-1}e^{-2\pi isa/q}\sum_{P<k\le P_1\le 2P} \Lambda(k)e^{2\pi i (k(\xi+s/q)-m\varphi_1(k))}
 	\]
 	and therefore it suffices to show that 
 	\[
 	\sum_{P<k\le P_1\le 2P} \Lambda(k)e^{2\pi i (k\alpha-m\varphi_1(k))}\lesssim |m|^{1/2}\log(P_1)P_1^{4/3}\big(\sigma_1(P_1)\varphi_1(P_1)\big)^{-1/2}+
 	\]
 	\[|m|^{1/6}\log^6(P_1)P_1^{13/12}\big(\sigma_1(P_1)\varphi_1(P_1)\big)^{-1/6} \]
 	where the implied constant is uniform in $\alpha=\xi+s/q$ where $\xi\in\mathbb{T}$, $0\le s\le q-1$. To that end, we use Vaughan's identity, which we state here for the sake of clarity.
 	\begin{lemma}Let $v,w\ge 1$ be real numbers and let $n\in\mathbb{N}$ be such that $n>v$, then
 		\[
 		\Lambda(n)=\sum_{\substack{b|n\\b\le w}}
 		\mu(b)\log(n/b)-\mathop{\sum\sum}\limits_{\substack{bc|n\\b\le w,c\le v}}\mu(b)\Lambda(c)+\mathop{\sum\sum}\limits_{\substack{bc|n\\b>w,c>v}}\mu(b)\Lambda(c)
 		\] 
 		Or equivalently, for every $n>v$ we have
 		\[
 		\Lambda(n)=\sum_{kl=n\text{, }l\le w}\log(k)\mu(l)-\sum_{l\le vw}\sum_{kl=n}\Pi_{v,w}(l)+\sum_{kl=n\text{, }k>v\text{, }l> w}\Lambda(k)\Xi_w(l)
 		\]
 		where \[\Pi_{v,w}(l)=\sum_{\substack{rs=l\\r\le v\text{, }s\le w}}\Lambda(r)\mu(s)
 		\]
 		and
 		\[
 		\Xi_w(l)=\sum_{\substack{d|l\\d>w}}\mu(d)
 		\]
 	\end{lemma}
 	\begin{proof}See Proposition 13.4, page 345 in \cite{IWKO}.
 	\end{proof}
 We use this result for $w=v$ so let $\Pi_{v,v}=\Pi_v$ for simplicity. More specifically, set $v=w=P_1^{1/3}$ where $P<P_1\le 2P$. For sufficiently large $P$, $n\in(P,P_1]$ is such that $n>v$ and thus Vaughan's identity is applicable. We have
 	\[
 	\sum_{P<n\le P_1\le 2P} \Lambda(n)e^{2\pi i (n\alpha-m\varphi_1(n))}=\sum_{P<n\le P_1\le 2P}\sum_{kl=n\text{, }l\le v}\log(k)\mu(l)e^{2\pi i (n\alpha-m\varphi_1(n))}\]
 	\[
 	-\sum_{P<n\le P_1\le 2P}\sum_{l\le v^2}\sum_{kl=n}\Pi_{v}(l)e^{2\pi i (n\alpha-m\varphi_1(n))}\]
 	\[
 	+\sum_{P<n\le P_1\le 2P}\sum_{kl=n\text{, }k>v\text{, }l> v}\Lambda(k)\Xi_v(l)e^{2\pi i (n\alpha-m\varphi_1(n))}=
 	\] 
 	\[
 	\sum_{l\le v} \sum_{P/l<k\le P_1/l}\log(k)\mu(l)e^{2\pi i(\alpha k l-m\varphi_1(kl))}
 	\]
 	\[
 	-\sum_{l\le v}\sum_{P/l<k\le P_1/l}\Pi_v(l)e^{2\pi i (\alpha kl-m\varphi_1(kl))}-\sum_{v< l\le v^2}\sum_{P/l<k\le P_1/l}\Pi_v(l)e^{2\pi i (\alpha kl-m\varphi_1(kl))}
 	\]
 	\[
 	+\sum_{v<l\le P_1/v}\sum_{\substack{P/l<k\le P_1/l\\k>v}}\Lambda(k)\Xi_v(l)e^{2\pi i (\alpha kl-m\varphi_1(kl))}=S_1-S_{2,1}-S_{2,2}+S_3
 	\]
 	where we changed the order of summation, and we have named the four terms appearing in the final sum by $S_1$, $-S_{2,1}$, $-S_{2,2}$ and $S_3$ respectively. The proof is now reduced to estimating these four terms. For $S_1$, we use summation by parts (for the specific version we are using see Theorem~A4, page 304 in \cite{NM}). Let's denote by $U_l(t)=\sum_{P/l<k\le t}e^{2\pi i(\alpha k l-m\varphi_1(kl))}$, then
 	\[
 	|S_1|\le \sum_{l\le v} |\mu(l)| \bigg|\sum_{P/l<k\le P_1/l}\log(k)e^{2\pi i(\alpha k l-m\varphi_1(kl))}\bigg|=\sum_{l\le v} |\mu(l)| \bigg|U_l(P_1/l)\log(P_1/l)-\int_{P/l}^{P_1/l}U_l(t)/tdt\bigg|\le
 	\]
 	\[
 	\sum_{l\le v}|U_l(P_1/l)|\log(P_1/l)+\sup_{P/l<t\le P_1/l}|U_l(t)|\big(\log(P_1/l)-\log(P/l)\big)\le 
 	\]
 	\[
 	2\log(P_1)\sum_{l\le v}\sup_{P/l<t\le P_1/l}|U_l(t)|
 	\] For every $t\in(P/l,P_1/l]$, $P_1/l\le 2P/l$, we estimate the dyadic pieces of the form 
 	\[\bigg|\sum_{Y<k\le Y'\le 2Y}e^{2\pi i (\alpha j  k l+m'(\varphi_1(kl)))}\bigg|\lesssim |m'|^{1/2}lY\big(\sigma_1(lY)\varphi_1(lY)\big)^{-1/2}
 	\]
 by applying Lemma~$\ref{Lpsi}$ for $Y=P/l$, $m'=-m$, $j=1$, $s=0$, and $Y'=x$, to obtain
 	\[
 	|U_l(x)|=\bigg|\sum_{P/l<k\le x}e^{2\pi i(\alpha k l-m\varphi_1(kl))}\bigg|\lesssim |m|^{1/2}P\big(\sigma_1(P)\varphi_1(P)\big)^{-1/2}\lesssim |m|^{1/2}P_1\big(\sigma_1(P_1)\varphi_1(P_1)\big)^{-1/2}  
 	\] where we have used the fact that $x(\sigma(x)\varphi_1(x))^{-1/2}$ is increasing. Thus we get
 	\[
 	|S_1|\lesssim  \log(P_1)\sum_{l\le v}\sup_{P/l<t\le P_1/l}|U_l(t)|\lesssim \log(P_1)v|m|^{1/2}P_1\big(\sigma_1(P_1)\varphi_1(P_1)\big)^{-1/2}=
 	\]
 	\[ |m|^{1/2}\log(P_1)P_1^{4/3}\big(\sigma_1(P_1)\varphi_1(P_1)\big)^{-1/2}
 	\]
 	For $S_{2,1}$, the estimates follow from similar considerations. Firstly, notice that
 	\[
 	|S_{2,1}|=\bigg|\sum_{l\le v}\sum_{P/l<k\le P_1/l}\Pi_v(l)e^{2\pi i (\alpha kl-m\varphi_1(kl))}\bigg|\le \sum_{l\le v}|\Pi_v(l)||U_l(P_1/l)| 
 	\] and also that 
 	\[
 	|\Pi_v(l)|=\bigg|\sum_{\substack{r,s\in\mathbb{N},\\rs=l\\r\le v\text{, }s\le v}}\Lambda(r)\mu(s)\bigg|\le \sum_{r|l}\Lambda(r)=\log(l)\le \log(P_1)\text{ for }l\le v=P_1^{1/3}
 \]
 Thus we get 
 \[
 |S_{2,1}|\le\sum_{l\le v}|\Pi_v(l)||U_l(P_1/l)|\le  
 \log(P_1)\sum_{l\le v}\sup_{P/l<t\le P_1/l}|U_l(t)|
 \]
 and we can conclude exactly as in the case of $S_1$.
 
 We now focus on $S_{2,2}$, and $S_3$ which will be treated simultaneously. We will use the dyadic pieces of the sums
 	\[
 	|S_{2,2}|=\bigg| \sum_{v< l\le v^2}\sum_{P/l<k\le P_1/l}\Pi_v(l)e^{2\pi i (\alpha kl-m\varphi_1(kl))}\bigg|\lesssim
 	\]
 	\[
 	\log(v^2)\log(P_1/v)\sup_{L\in[v,v^2]}\sup_{K\in[P/v^2,P_1/v]}\sup_{L'\in(L,2L]}\sup_{K'\in(K,2K]}\bigg|
 	\sum_{L< l\le L'\le2L}\sum_{\substack{K<k\le K'\le2K\\P<kl\le P_1}}\Pi_v(l)e^{2\pi i (\alpha kl-m\varphi_1(kl))}\bigg|\lesssim
 	\]
 	\begin{equation}\label{DyadPc1}
 		\log^2(P_1)\sup_{L\in[v,v^2]}\sup_{K\in[P/v^2,P_1/v]}\sup_{L'\in(L,2L]}\sup_{K'\in(K,2K]}\bigg|
 		\sum_{L< l\le L'\le2L}\sum_{\substack{K<k\le K'\le2K\\P<kl\le P_1}}\Pi_v(l)e^{2\pi i (\alpha kl-m\varphi_1(kl))}\bigg|
 	\end{equation}
 For $|S_3|$, we have
 	\[
 	|S_3|=\bigg|\sum_{v<l\le P_1/v}\sum_{\substack{P/l<k\le P_1/l\\k>v}}\Lambda(k)\Xi_v(l)e^{2\pi i (\alpha kl-m\varphi_1(kl))}\bigg|\lesssim
 	\]
 	\[
 	\log^2(P_1/v)\sup_{L\in[v,P_1/v]}\sup_{K\in[v,P_1/v]}\sup_{L'\in(L,2L]}\sup_{K'\in(K',2K]}\bigg|\sum_{L<l\le L'\le 2L}\sum_{\substack{K<k\le K'\le 2K\\P<kl\le P_1}}\Lambda(k)\Xi_v(l)e^{2\pi i (\alpha kl-m\varphi_1(kl))}\bigg|\lesssim
 	\]
 	\begin{equation}\label{DyadPc2}
 		\log^2(P_1)\sup_{L\in[v,P_1/v]}\sup_{K\in[v,P_1/v]}\sup_{L'\in(L,2L]}\sup_{K'\in(K',2K]}\bigg|\sum_{L<l\le L'\le 2L}\sum_{\substack{K<k\le K'\le 2K\\P<kl\le P_1}}\Lambda(k)\Xi_v(l)e^{2\pi i (\alpha kl-m\varphi_1(kl))}\bigg|
 	\end{equation}

 	On the one hand, we have
 	\[
 	\sum_{L<l\le L'\le 2L}|\Pi_v(l)|^2\le\sum_{L<l\le 2L}\log^2(l) \lesssim \log^2(L)L
 	\]
 	On the other hand, if we let $d(n)=|\{\,d\in\mathbb{N}\,:\,d|n\,\}|$, we have
 	\[
 	\sum_{L< l\le L'\le2L}|\Xi_v(l)|^2\lesssim \sum_{1\le l\le 2L} d(l)^2\lesssim L\log^3(L)
 	\text{, (see Theorem A.14, page 313 in \cite{NM})}\]
We now use the following technical Lemma.
 	\begin{lemma}\label{6.12'}
 		Let $L,K\in\mathbb{N}$
 		and  $m\in\mathbb{Z}\setminus\{0\}$. If $|m|\min\{L,K\}\le\varphi_1(LK)\sigma_1(LK)$ and $\varphi_1(LK)\le\min\{L,K\}^4$, then
 		\[
 		\bigg|\sum_{L<l\le L'\le 2L}\sum_{\substack{K<k\le K'\le 2K\\P<kl\le P_1}}\Delta_1(l)\Delta_2(k)e^{2\pi i(\alpha kl-m\varphi_1(kl))}\bigg|\lesssim\]
 		\[
 		|m|^{1/6}\log^2(L)\log^2(K)(\sigma_1(LK)\varphi_1(LK))^{-1/6}\min\{L,K\}^{1/6}KL
 		\] for every sequence of complex numbers $\big(\Delta_1(l)\big)_{l\in(L,2L]}$ and $\big(\Delta_2(k)\big)_{k\in(K,2K]}$ having the property that 
 		\[
 		\sum_{L<l\le 2L}|\Delta_1(l)|^2\lesssim L\log^3(L)\text{ and }\sum_{K<k\le 2K}|\Delta_2(k)|^2\lesssim K\log^3(K)
 		\] 
 	\end{lemma}
 	\begin{proof}
 		For the proof of this result we refer to \cite{MMR}, Lemma~6.12, page 23.
 	\end{proof}
 	We wish to use the above result to estimate the dyadic pieces in $\ref{DyadPc1}$ and $\ref{DyadPc2}$. For any $K,L\in\mathbb{N}$ which make the dyadic piece nonempty, there exist natural numbers $k,l$ such that
 	\[
 	KL< kl \le P_1\text{ and }KL\ge \frac{kl}{4}>P/4\ge P_1/8
 	\]
 	and thus $P_1/8\le KL\le P_1$. For $S_{2,2}$, notice that
 	\[
 	K\le P_1/v=P_1^{2/3}\text{ and }K>P/v^2\ge P_1^{1/3}/2\text{, and thus }K\in\big[P_1^{1/3}/2,P_1^{2/3}\big] 
 	\]
 	and similarly
 	\[
 	L\in[v,v^2]=\big[P_1^{1/3},P_1^{2/3}\big]\subseteq \big[P_1^{1/3}/2,P_1^{2/3}\big] 
 	\]
 	For $S_3$, notice that
 	\[
 	K,L\in[v,P_1/v]=\big[P_1^{1/3},P_1^{2/3}\big]\subseteq \big[P_1^{1/3}/2,P_1^{2/3}\big] 
 	\]Therefore, in either case, $K,L\in \big[P_1^{1/3}/2,P_1^{2/3}\big] $. Also, since $KL\le P_1$, we must have that $\min\{L,K\}\le P_1^{1/2}$ and $\varphi_1(LK)\le \varphi_1(P_1)\le P_1\le \min\{L,K\}^4$, since $\min\{L,K\}^4> P_1^{4/3}/16\ge P_1$ for sufficiently large $P_1$. Finally, we have
 	\[
 	|m|\min\{L,K\}\le M P_1^{1/2}=P^{1+\chi+\varepsilon-99\gamma_2/100}P_1^{1/2}\le P_1^{3/2+\chi+\varepsilon-99\gamma_2/100}
 	\] We claim that $3/2+\chi+\varepsilon-99\gamma_2/100<\gamma_1$. To show this, considering that $0<\varepsilon\le \chi/100$, it suffices to show 
 	\[
 	3/2+101\chi/100-99\gamma_2/100-\gamma_1<0
 	\]
 	which, in turn, is equivalent to 
 	
 	\[
 	101\chi/49+99/49(1-\gamma_2)+100/49(1-\gamma_1)<1
 	\]but this is true since
 	\[
 	101\chi/49+99/49(1-\gamma_2)+100/49(1-\gamma_1)<31\chi+17(1-\gamma_2)+16(1-\gamma_1)\le1
 	\]
 	by our assumptions. The proof of the claim is complete. Now if we let $\delta=-(3/2+\chi+\varepsilon-99\gamma_2/100-\gamma_1)>0$,
 	then
 	\[
 	|m|\min\{L,K\}\le P_1^{\gamma_1-\delta}\lesssim \varphi_1(P_1)\sigma_1(P_1)\lesssim \varphi_1(LK)\sigma_1(LK)
 	\]Thus we may use the Lemma $\ref{6.12'}$ for appropriate $\Delta_1,\Delta_2$ depending on whether we deal with $S_{2,2}$ or $S_3$ to obtain	
 	\[
 	\bigg|\sum_{L<l\le L'\le 2L}\sum_{\substack{K<k\le K'\le 2K\\P<kl\le P_1}}\Delta_1(l)\Delta_2(k)e^{2\pi i(\alpha kl-m\varphi_1(kl))}\bigg|\lesssim
 	\]
 	
 	\[
 	|m|^{1/6}\log^2(L)\log^2(K)(\sigma_1(LK)\varphi_1(LK))^{-1/6}\min\{L,K\}^{1/6}KL\lesssim 
 	\]
 	\[
 	|m|^{1/6}\log^4(P_1)(\sigma_1(P_1)\varphi_1(P_1))^{-1/6}P_1^{1/12}P_1=|m|^{1/6}\log^4(P_1)(\sigma_1(P_1)\varphi_1(P_1))^{-1/6}P_1^{13/12}
 	\]
 	And thus
 	\[
 	|S_{2,2}|,|S_3|\lesssim |m|^{1/6}\log^6(P_1)(\sigma_1(P_1)\varphi_1(P_1))^{-1/6}P_1^{13/12}
 	\]
 	This concludes the proof of Lemma $\ref{LLambda}$.
 \end{proof}
 We are now ready to prove the Main Lemma.
 \begin{proof}[Proof of Lemma~$\ref{PR2}$]Let $a,q\in \mathbb{Z}$ be such that $0\le a\le q-1$ and $(a,q)=1$, and let $\chi>0$ be such that $16(1-\gamma_1)+17(1-\gamma_2)+31\chi\le1$.
 	By Lemma 2.2 in \cite{HLMP}, we have that $\lfloor\varphi_1(n)\rfloor-\lfloor\varphi_1(n)-\psi(n)\rfloor=1_B(n)$, and thus
 	\[
 	\sum_{\substack{p\in\mathbb{P}_B\cap [N]\\p\equiv a\Mod q}}\psi(p)^{-1}\log(p)e(p\xi)=\sum_{\substack{p\in\mathbb{P}\cap [N]\\p\equiv a\Mod q}}\psi(p)^{-1}\log(p)(\lfloor\varphi_1(p)\rfloor-\lfloor\varphi_1(p)-\psi(p)\rfloor)e(p\xi)=
 	\]
 	
 	\[
 	\sum_{\substack{p\in\mathbb{P}\cap [N]\\p\equiv a\Mod q}}\psi(p)^{-1}\log(p)\big(\psi(p)+(\Phi(\varphi_1(p)-\psi(p)))-\Phi(\varphi_1(p))\big)e(p\xi)=
 	\]
 	(where $\Phi(x)=\{x\}-1/2=x-\lfloor x\rfloor-1/2 $)
 	\[
 	\sum_{\substack{p\in\mathbb{P}\cap [N]\\p\equiv a\Mod q}}\log(p)e(p\xi)+ \sum_{\substack{p\in\mathbb{P}\cap [N]\\p\equiv a\Mod q}}\psi(p)^{-1}\log(p)(\Phi(\varphi_1(p)-\psi(p)))-\Phi(\varphi_1(p))e(p\xi)=
 	\]
 	
 	\[
 	\sum_{\substack{p\in\mathbb{P}\cap [N]\\p\equiv a\Mod q}}\log(p)e(p\xi)+ \sum_{n\in [N]}\psi(n)^{-1}\Lambda_{a,q}(n)\big(\Phi(\varphi_1(n)-\psi(n)))-\Phi(\varphi_1(n))\big)e(n\xi)-
 	\]
 	\[
 	-\bigg(
 	\sum_{\substack{n\in[N]\\n=p^s\text{ for some }p\in\mathbb{P},s\ge2}}\psi(n)^{-1}\Lambda_{a,q}(n)\big(\Phi(\varphi_1(n)-\psi(n))-\Phi(\varphi_1(n))\big)e(n\xi)\bigg)
 	\]
 The absolute value of the third term can be bounded by
 	\[
 	 \sum_{\substack{1\le p^s\le N,\\p\in\mathbb{P},s\ge2}}\psi(p^s)^{-1}\log(p)\lesssim \frac{N}{\varphi_2(N)}\sum_{\substack{1\le p^s\le N,\\p\in\mathbb{P},s\ge2}}\log(p)\text{, since }\psi(p^s)^{-1}\lesssim \varphi_2'(p^s)^{-1}\lesssim \varphi_2'(N)^{-1}\lesssim \frac{N}{\varphi_2(N)}
 	\]
 Notice that each prime $p$ will contribute $\log(p)$ to the sum exactly $s_p-1$ times where $s_p$ is the integer with the property $p^{s_p}\le N< p^{s_p+1}$ or equivalently $s_p=\lfloor \log(N)/\log(p)\rfloor$. Thus
 	\[
 	\sum_{\substack{1\le p^s\le N,\\p\in\mathbb{P},s\ge2}}\log(p)\le \sum_{\substack{1\le p^2\le N,\\p\in\mathbb{P}}}\bigg\lfloor \frac{\log(N)}{\log(p)}\bigg\rfloor \log(p) \le \log(N)\sum_{\substack{p\in\mathbb{P}\\p\le\sqrt{N}}}1\lesssim \frac{\log(N)\sqrt{N}}{\log(\sqrt{N})}\lesssim N^{1/2}
 	\]
 	where we have used the fact that $|\mathbb{P}\cap [1,x]|\lesssim x(\log x)^{-1}$. For every $\varepsilon'>0$, there exists a positive constant $C_{\varepsilon'}$ such that
 	\[
 	\bigg(\,\text{third term}\,\bigg)\lesssim \frac{N}{\varphi_2(N)}N^{1/2}\le C_{\varepsilon}N^{3/2-\gamma_2+\varepsilon'} 
 	\]
 For the choice $\varepsilon'=\gamma_2-3/2\chi-1/2$, we can verify that $\varepsilon'\ge 16/17-3/62-1/2>0$ and that $3/2-\gamma_2+\varepsilon'=1-3/2\chi$, and thus we have shown 
 	\begin{equation}\label{1red}
 		\begin{aligned}
 			&\sum_{\substack{p\in\mathbb{P}\cap B\cap [N]\\p\equiv a\Mod q}}\psi(p)^{-1}\log(p)e(p\xi)= \sum_{\substack{p\in\mathbb{P}\cap [N]\\p\equiv a\Mod q}}\log(p)e(p\xi)+\\
 			&+\sum_{n=1}^N\psi(n)^{-1}\Lambda_{a,q}(n)\big(\Phi(\varphi_1(n)-\psi(n))-\Phi(\varphi_1(n))\big)e(n\xi)+O(N^{1-3/2\chi})
 		\end{aligned}
 	\end{equation}
 	This concludes our first reduction. We now bound the second term in $\ref{1red}$ by looking at its dyadic pieces. To achieve this, we will use the estimates for exponential sums we have proven in the section, together with the Fourier Expansion of the function $\Phi$. More specifically, for $M\ge 1$, we know that 
 \[
 \Phi(x)=\sum_{0<|m|\le M}\frac{1}{2\pi im}e^{-2\pi imx}+g_M(x)
 \]
 with $g_M(x)=O\big(\min\big\{1,\frac{1}{M||x||}\big\}\big)$ and $\min\big\{1,\frac{1}{M||x||}\big\}=\sum_{m\in\mathbb{Z}}b_me^{2\pi i m x}$ where $|b_m|\lesssim \min\big\{\frac{\log(M)}{M},\frac{1}{|m|},\frac{M}{|m|^2}\big\}$, see section 2 in \cite{DRHB}. Let $P\in\mathbb{N}$, $P'\in[P+1,2P]$ and $M\ge 1$, then
 	
 	\[
 	\sum_{P<k\le P'\le 2P}\psi(k)^{-1}\Lambda_{a,q}(k)\big(\Phi(\varphi_1(k)-\psi(k))-\Phi(\varphi_1(k))\big)e(k\xi)=
 	\] 	
 \begin{equation}\label{secondred}
 		\begin{aligned}
 			&\sum_{0<|m|\le M}\frac{1}{2\pi im} \sum_{P<k\le P'\le 2P}\psi(k)^{-1}\Lambda_{a,q}(k)\big(e^{2\pi i(-m\varphi_1(k)+m\psi(k)+k\xi) }-e^{2\pi i (-m\varphi_1(k))+k\xi}\big)+\\
 			&+\sum_{P<k\le P'\le 2P}\psi(k)^{-1}\Lambda_{a,q}(k)\big(g_M(\varphi_1(k)-\psi(k))-g_M(\varphi_1(k))\big)e(k\xi)
 		\end{aligned}
 	\end{equation}
 	We estimate the second term of the above sum using Lemma~$\ref{Lpsi}$. We have
 	\[
 	\bigg|\sum_{P<k\le P'\le 2P}\psi(k)^{-1}\Lambda_{a,q}(k)\big(g_M(\varphi_1(k)-\psi(k))-g_M(\varphi_1(k))\big)e(k\xi)\bigg|\lesssim 
 	\]
 	\[
 	\sum_{P<k\le P'\le 2P}\psi(k)^{-1}\Lambda_{a,q}(k)\bigg(\min\bigg\{1,\frac{1}{M||\varphi_1(k)-\psi(k)||}\bigg\}+\min\bigg\{1,\frac{1}{M||\varphi_1(k)||}\bigg\}\bigg)
 	\]
 	Note that
 	\[
 	\sum_{P<k\le P'\le 2P}\psi(k)^{-1}\Lambda_{a,q}(k)\min\bigg\{1,\frac{1}{M||\varphi_1(k)-\psi(k)||}\bigg\}\lesssim \frac{\log(P)}{\varphi_2'(P)}\sum_{P<k\le P'\le 2P}\sum_{m\in\mathbb{Z}}b_me^{2\pi i m (\varphi_1(k)-\psi(k))}
 	\]
 	Using the estimate $|b_m|\lesssim M/|m|^2$, we conclude that the function is summable and by Fubini-Tonelli we have
 \[
 \bigg| \sum_{P<k\le P'\le 2P}\sum_{m\in\mathbb{Z}}b_me^{2\pi i m (\varphi_1(k)-\psi(k))}\bigg|\le \sum_{m\in\mathbb{Z}}|b_m|\bigg|\sum_{P<k\le P'\le 2P}e^{2\pi i m (\varphi_1(k)-\psi(k))}\bigg|=
 	\]
 	\[
 	|b_0|P+\sum_{0<|m|\le M}|b_m|\bigg|\sum_{P<k\le P'\le 2P}e^{2\pi i m (\varphi_1(k)-\psi(k))}\bigg|+\sum_{|m|> M}|b_m|\bigg|\sum_{P<k\le P'\le 2P}e^{2\pi i m (\varphi_1(k)-\psi(k))}\bigg|\lesssim
 	\]
 	\[\frac{\log(M)P}{M}+\sum_{0<|m|\le M}\frac{\log(M)}{M}|m|^{1/2}P(\sigma_1(P)\varphi_1(P))^{-1/2}+\sum_{|m|> M}\frac{M}{|m|^2}|m|^{1/2}P(\sigma_1(P)\varphi_1(P))^{-1/2}\lesssim
 	\]
 	\[
 	\frac{\log(M)P}{M}+\log(M)M^{1/2}P(\sigma_1(P)\varphi_1(P))^{-1/2}+M^{1/2}P(\sigma_1(P)\varphi_1(P))^{-1/2}\lesssim
 	\]
 	\[
 	 \frac{\log(M)P}{M}+\log(M)M^{1/2}P(\sigma_1(P)\varphi_1(P))^{-1/2}
 	\]
 	where the estimates are justified by Lemma~\ref{Lpsi} for $j=0$, $l=1$ and $s=1$, together with the estimates for $|b_m|$. Thus
 	\[
 	\sum_{P<k\le P'\le 2P}\psi(k)^{-1}\Lambda_{a,q}(k)\min\bigg\{1,\frac{1}{M||\varphi_1(k)-\psi(k)||}\bigg\}\lesssim 
 	\]
 	\[
 	\frac{\log(P)}{\varphi_2'(P)}\Big( \frac{\log(M)P}{M}+\log(M)M^{1/2}P(\sigma_1(P)\varphi_1(P))^{-1/2}\Big)
 	\]
 	With similar considerations (and by applying Lemma~$\ref{Lpsi}$ for $s=0$), one obtains
 	\[
 	\sum_{P<k\le P'\le 2P}\psi(k)^{-1}\Lambda_{b,m}(k)\min\bigg\{1,\frac{1}{M||\varphi_1(k)||}\bigg\}\lesssim \frac{\log(P)}{\varphi_2'(P)}\bigg( \frac{\log(M)P}{M}+\log(M)M^{1/2}P(\sigma_1(P)\varphi_1(P))^{-1/2}\bigg)
 	\]
 	and thus
 	\[
 	\sum_{P<k\le P'\le 2P}\psi(k)^{-1}\Lambda_{a,q}(k)\bigg(\min\bigg\{1,\frac{1}{M||\varphi_1(k)-\psi(k)||}\bigg\}+\min\bigg\{1,\frac{1}{M||\varphi_1(k)||}\bigg\}\bigg)\lesssim
 	\]
 	\[
 	\frac{\log(P)}{\varphi_2'(P)}\bigg( \frac{\log(M)P}{M}+\log(M)M^{1/2}P(\sigma_1(P)\varphi_1(P))^{-1/2}\bigg)
 	\]
 	Let's fix a number $\varepsilon$ such that $0<\varepsilon<\chi/100$ and let $M=P^{1-99\gamma_2/100+\chi+\varepsilon}$. Then for all $\varepsilon_2>0$, we have that
 	\[
 	\frac{\log(P)}{\varphi_2'(P)}\bigg( \frac{\log(M)P}{M}+\log(M)M^{1/2}P(\sigma_1(P)\varphi_1(P))^{-1/2}\bigg)\lesssim
 	\]
 	\[
 	\log^2(P)\frac{P^{99\gamma_2/100-\chi-\varepsilon}}{\varphi_2'(P)}+\log^2(P)\frac{P^{3/2-99\gamma_2/200+\chi/2+\varepsilon/2}}{\varphi_2'(P)(\sigma_1(P)\varphi_1(P))^{1/2}}\lesssim_{\varepsilon_2}
 	\]	
 	\[
 	\log^2(P)\bigg(P^{99\gamma_2/100-\chi-\varepsilon-\gamma_2+1+\varepsilon_2}+P^{5/2-\gamma_2-\gamma_1/2-99\gamma_2/200+\chi/2+\varepsilon/2+3\varepsilon_2/2}\bigg)
 	\]
 	since $\varphi_2'(x)\gtrsim x^{\gamma_2-1-\varepsilon_2/2}$ and $\sigma_1^{-1}(x)\lesssim_{\delta} x^{\delta}$ for all $\delta>0$. It suffices to show that there exists a positive number $\varepsilon_2$ such that
 	\[
 	-\gamma_2/100-\chi-\varepsilon+1+\varepsilon_2<1-\chi-\varepsilon
 	\]
 	and
 	\[
 	5/2-299\gamma_2/200-\gamma_1/2+\chi/2+\varepsilon/2+3\varepsilon_2/2<1-\chi-\varepsilon
 	\]
 	in order to conclude that
 	\begin{equation}\label{secondredhelp}
 		\begin{aligned}
 			&\bigg|\sum_{P<k\le P'\le 2P}\psi(k)^{-1}\Lambda_{a,q}(k)\big(g_M(\varphi_1(k)-\psi(k))-g_M(\varphi_1(k))\big)e(k\xi)\bigg|=O(P^{1-\chi-\varepsilon})
 		\end{aligned}
 	\end{equation}
 	The first inequality is equivalent to $\varepsilon_2<\gamma_2/100$. Remembering that $\varepsilon<\chi/100$, for the second inequality to be true, it suffices to have
 	\[
 	3/2-299\gamma_2/200-\gamma_1/2+303\chi/200+3/2\varepsilon_2<0
 	\]
 	or equivalently
 	
 	\[
 	\varepsilon_2<99/300\bigg(1-299/99(1-\gamma_2)-100/99(1-\gamma_1)-303\chi/99\bigg)
 	\]
 	We have that 
 	\[
 	299/99(1-\gamma_2)+100/99(1-\gamma_1)+303\chi/99 < 17(1-\gamma_2)+16(1-\gamma_1)+31\chi\le 1
 	\]
 	Thus, we may choose 
 	$
 	\varepsilon_2=\min\bigg\{\gamma_2/200,99/600\bigg(1-299/99(1-\gamma_2)-100/99(1-\gamma_1)-303\chi/99\bigg)\bigg\}>0
 	$
 	and both inequalities are satisfied. Now notice that from \ref{1red} together with \ref{secondred} and \ref{secondredhelp}, we get
 	\[
 	\bigg|\sum_{\substack{p\in\mathbb{P}\cap B\cap [N]\\p\equiv a\Mod q}}\psi(p)^{-1}\log(p)e(p\xi)-\sum_{\substack{p\in\mathbb{P}\cap [N]\\p\equiv a\Mod q}}\log(p)e(p\xi)\bigg|=
 	\]	
 	\[\bigg|\sum_{n=1}^N\psi(n)^{-1}\Lambda_{a,q}(n)\big(\Phi(\varphi_1(n)-\psi(n))-\Phi(\varphi_1(n))\big)e(n\xi)\bigg|+O(N^{1-3/2\chi})\lesssim
 	\]
 	\[
 	\log(N)\sup_{1\le P\le N}\bigg|\sum_{P<n\le P'\le 2P}\psi(n)^{-1}\Lambda_{a,q}(n)\big(\Phi(\varphi_1(n)-\psi(n))-\Phi(\varphi_1(n))\big)e(n\xi)\bigg|+N^{1-3/2\chi}\lesssim
 	\]
 	\[
 	\log(N)\sup_{1\le P\le N}\bigg|\sum_{0<|m|\le M}\frac{1}{2\pi im} \sum_{P<k\le P'\le 2P}\psi(k)^{-1}\Lambda_{a,q}(k)\big(e^{2\pi i(-m\varphi_1(k)+m\psi(k)+k\xi) }-e^{2\pi i (-m\varphi_1(k))+k\xi}\big)\bigg|+
 	\]
 	\[\log(N)N^{1-\chi-\varepsilon}
 	\]
 We will use summation by parts, let
 	\[
 	v_m(k)=\Lambda_{a,q}(k)e^{2\pi i(k\xi-m\varphi_1(k))}\text{, }V_m(x)=\sum_{P<k\le x} v_m(k)\text{, }d_m(t)=\psi^{-1}(t)(e^{2\pi im \psi(t)}-1)
 	\] We obtain
 	\[
 	\bigg| \sum_{P<k\le P'\le 2P}\psi(k)^{-1}\Lambda_{a,q}(k)\big(e^{2\pi i(-m\varphi_1(k)+m\psi(k)+k\xi) }-e^{2\pi i (-m\varphi_1(k))+k\xi}\big)\bigg|=\bigg|\sum_{P<k\le P'\le 2P}v_m(k)d_m(k) \bigg|=
 	\]
 	\[
 	\bigg|V_m(P')d_m(P')+\int_P^{P'}V_m(t)d_m'(t)dt\bigg|\le |V_m(P')d_m(P')|+\int_P^{P'}|V_m(t)d_m'(t)|dt
 	\]
 	We have 
 	\[
 	|d_m(t)|\le |\psi^{-1}(t)2\pi m \psi(t)|\lesssim |m|
 	\]
 	and
 	\[
 	|d_m'(t)|\le|\psi'(t)/\psi^2(t)(e^{2\pi im\psi(t)}-1)|+|\psi^{-1}(t)2\pi m\psi'(t)e^{2\pi im\psi(t)}|\lesssim \bigg|\frac{\psi'(t)}{\psi(t)}\bigg||m|\lesssim\bigg|\frac{\varphi_2''(t)}{\varphi_2'(t)}\bigg||m|\lesssim\frac{|m|}{t}
 	\]
 	We can now estimate
 	\[
 	|V_m(P')d_m(P')|+\int_P^{P'}|V_m(t)d_m'(t)|dt\lesssim |m||V_m(P')|+\int_P^{P'}\sup_{P<t\le2P}|V_m(t)m|/tdt\lesssim |m|\sup_{P<t\le2P}|V_m(t)|
 	\]
 	and thus	
 	\[
 	\bigg|\sum_{\substack{p\in\mathbb{P}\cap B\cap [N]\\p\equiv b\Mod m}}\psi(p)^{-1}\log(p)e(p\xi)-\sum_{\substack{p\in\mathbb{P}\cap [N]\\p\equiv b\Mod m}}\log(p)e(p\xi)\bigg|\lesssim
 	\]
 	\[
 	\log(N)\sup_{1\le P\le N}\sum_{0<|m|\le M}\frac{1}{2\pi |m|}|m|\sup_{P<t\le2P}|V_m(t)|+\log(N)N^{1-\chi-\varepsilon}\lesssim
 	\]
 	
 	\begin{equation}\label{finalred}
 		\log(N)\sup_{1\le P\le N}\sum_{0<|m|\le M}\sup_{P<P_1\le2P}|V_m(P_1)|+\log(N)N^{1-\chi-\varepsilon}
 	\end{equation}
 	This is the final reduction, making the estimate of Lemma~\ref{LLambda}, the only missing element in our proof. Notice that since we have chosen $M=P^{1+\chi+\varepsilon-99\gamma_2/100}$ where $\chi>0$, $0<\varepsilon<\chi/100$ and $16(1-\gamma_1)+17(1-\gamma_2)+31\chi\le1$, Lemma \ref{LLambda} is directly applicable, and we get
 	
 	\[
 	|V_m(P_1)|=\bigg|\sum_{P<k\le P_1\le 2P}\Lambda_{a,q}(k)e^{2\pi i(k\xi-m\varphi_1(k))}\bigg|\lesssim
 	\]
 	\[
 	|m|^{1/2}\log(P_1)P_1^{4/3}\big(\sigma_1(P_1)\varphi_1(P_1)\big)^{-1/2}+|m|^{1/6}\log^6(P_1)P_1^{13/12}(\sigma_1(P_1)\varphi_1(P_1))^{-1/6}
 	\]
 	and thus for any number $\varepsilon_3>0$, we have
 	\[
 	\sum_{0<|m|\le M}\sup_{P<P_1\le2P}|V_m(P_1)|\lesssim \sum_{0<|m|\le M} |m|^{1/2}\log(P_1)P_1^{4/3}\big(\sigma_1(P_1)\varphi_1(P_1)\big)^{-1/2}+
 	\]
 	\[
 	\sum_{0<|m|\le M} |m|^{1/6}\log^6(P_1)P_1^{13/12}(\sigma_1(P_1)\varphi_1(P_1))^{-1/6}\lesssim
 	\]
 	\[
 	M^{3/2}\log(P)P^{4/3}\big(\sigma_1(P)\varphi_1(P)\big)^{-1/2}+M^{7/6}\log^6(P)P^{13/12}(\sigma_1(P)\varphi_1(P))^{-1/6}\lesssim
 	\]
 	\[
 	P^{3/2+3\chi/2+3\varepsilon/2-297\gamma_2/200+4/3}\log(P)\varphi_1(P)^{-1/2}\sigma_1(P)^{-1/2}
 	\]
 	\[
 	+P^{7/6+7\chi/6+7\varepsilon/6-693\gamma_2/600+13/12}\log^6(P)\varphi_1(P)^{-1/6}\sigma_1(P)^{-1/6}\lesssim_{\varepsilon_3}
 	\]
 	\[
 	P^{3/2+3\chi/2+3\varepsilon/2-297\gamma_2/200+4/3-\gamma_1/2+\varepsilon_3}
 	\]
 	\[
 	+P^{7/6+7/6\chi+7\varepsilon/6-693\gamma_2/600+13/12-\gamma_1/6+\varepsilon_3}
 	\]
 	since $\varphi_1(x)\gtrsim_{\varepsilon_3} x^{\gamma_1-\varepsilon_3}$, $\sigma_1^{-1/2}(x)\lesssim_{\varepsilon_3} x^{\varepsilon_3/4}$ and $\log^6(x)\lesssim_{\varepsilon_3}x^{\varepsilon_3/12}$, and thus
 	\[
 	\varphi_1^{-1/2}(x)\log(x)\sigma_1^{-1/2}(x)\lesssim_{\varepsilon_3}x^{-\gamma_1/2+\varepsilon_3/2+\varepsilon_3/12+\varepsilon_3/4}\lesssim_{\varepsilon_3}x^{-\gamma_1/2+\varepsilon_3}
 	\]
 	\[\varphi_1^{-1/6}(x)\log^6(x)\sigma_1^{-1/6}(x)\lesssim_{\varepsilon_3} x^{-\gamma_1/6+\varepsilon_3/6+\varepsilon_3/12+\varepsilon_3/12}\lesssim_{\varepsilon_3}x^{-\gamma_1/6+\varepsilon_3}\]
 	It suffices to show that there exists a positive number $\varepsilon_3$ such that 
 	\[
 	3/2+3\chi/2+3\varepsilon/2-297\gamma_2/200+4/3-\gamma_1/2+\varepsilon_3< 1-\chi-\varepsilon
 	\]
 	and
 	\[
 	7/6+7\chi/6+7\varepsilon/6-693\gamma_2/600+13/12-\gamma_1/6+\varepsilon_3<1-\chi-\varepsilon
 	\]
 	in order to conclude that
 	\[
 	\sum_{0<|m|\le M}\sup_{P<P_1\le2P}|V_m(P_1)|\lesssim P^{1-\chi-\varepsilon}
 	\]
 	Remembering that $\varepsilon<\chi/100$, for the first inequality, it suffices to show that there exists an $\varepsilon_3>0$ such that
 	\[
 	11/6+5\chi/2+5\chi/200-297\gamma_2/200-\gamma_1/2+\varepsilon_3<0
 	\]
 	which is equivalent to
 	\[
 	891/91(1-\gamma_2)+300/91(1-\gamma_1)+1515\chi/91+600\varepsilon_3/91<1
 	\]
 	and notice that
 	\[
 	891/91(1-\gamma_2)+300/91(1-\gamma_1)+(1515/91)\chi<17(1-\gamma_2)+16(1-\gamma_1)+31\chi\le1
 	\]
Thus for sufficiently small $\varepsilon_3$ the first inequality is satisfied. For the second inequality, in a similar fashion, it suffices to find a $\varepsilon_3>0$ such that
 	\[
 	5/4+1313\chi/600-693\gamma_2/600-\gamma_1/6+\varepsilon_3<0
 	\]
 	or equivalently
 	\[
 	693/43(1-\gamma_2)+100/43(1-\gamma_1)+1313\chi/43+600\varepsilon_3/43<1
 	\]
 	and notice that
 	\[
 	693/43(1-\gamma_2)+100/43(1-\gamma_1)+(1313/43)\chi<17(1-\gamma_2)+16(1-\gamma_1)+31\chi\le1
 	\]
 Thus for sufficiently small $\varepsilon_3$ the second inequality is also satisfied. Therefore, we have that
 	\[
 	\sum_{0<|m|\le M}\sup_{P<P_1\le2P}|V_m(P_1)|\lesssim P^{1-\chi-\varepsilon}
 	\]
 	Thus, by \ref{finalred}, we get
 	\[
 	\bigg|\sum_{\substack{p\in\mathbb{P}\cap B\cap [N]\\p\equiv b\Mod m}}\psi(p)^{-1}\log(p)e(p\xi)-\sum_{\substack{p\in\mathbb{P}\cap [N]\\p\equiv b\Mod m}}\log(p)e(p\xi)\bigg|\lesssim
 	\]
 	\[
 	\log(N)\sup_{1\le P\le N}P^{1-\chi-\varepsilon}+\log(N)N^{1-\chi-\varepsilon}=\log(N)N^{1-\chi-\varepsilon}\lesssim N^{1-\chi-\varepsilon/2}
 	\]
Let's choose $\chi'$ to be $\varepsilon/2$, we have shown that
 	\[
 	\sum_{\substack{p\in\mathbb{P}\cap B\cap [N]\\p\equiv b\Mod m}}\psi(p)^{-1}\log(p)e(p\xi)-\sum_{\substack{p\in\mathbb{P}\cap [N]\\p\equiv b\Mod m}}\log(p)e(p\xi)=O(N^{1-\chi-\chi'})
 	\]
 	which is the desired result. The proof of Lemma~\ref{PR2} is complete.
 \end{proof}	

\end{document}